\newcommand{\s}{\mathbb{S}}
\newcommand{\CC}{\mathbb{C}}
\newcommand{\NN}{\mathbb{N}}
\newcommand{\PP}{\mathbb{P}}
\newcommand{\Aa}{\mathcal{A}}
\newcommand{\Bb}{\mathcal{B}}
\newcommand{\Cc}{\mathcal{C}}
\newcommand{\Ee}{\mathcal{E}}
\newcommand{\Gg}{\mathcal{G}}
\newcommand{\cg}{\mathfrak{g}}
\newcommand{\Dd}{\mathcal{D}}
\newcommand{\Ff}{\mathcal{F}}
\newcommand{\Hh}{\mathcal{H}}
\newcommand{\Kk}{\mathcal{K}}
\newcommand{\Pp}{\mathcal{P}}
\newcommand{\Oo}{\mathcal{O}}
\newcommand{\Uu}{\mathcal{U}}
\newcommand{\Qq}{\mathcal{Q}}
\newcommand{\Xx}{\mathcal{X}}
\newcommand{\Yy}{\mathcal{Y}}
\newcommand{\R}{\mathcal{R}}
\newcommand{\Ss}{\mathcal{S}}
\newcommand{\V}{\mathcal{V}}
\newcommand{\W}{\mathcal{W}}
\newcommand{\Zz}{\mathcal{Z}}
\newcommand{\GGG}{\mathscr{G}}
\newcommand{\ZZ}{\mathbb{Z}}
\newcommand{\E}{\sf{E}}
\newcommand{\F}{\sf{F}}
\newcommand{\TTt}{\sf{T}}
\newcommand{\SOD}{\sf{SOD}}
\newcommand{\Hhh}{\sf{H}}
\newcommand{\spi}{\sf{\Psi}}
\newcommand{\SL}{\mathfrak{sl}}
\newcommand{\kk}{\underline{k}}
\newcommand{\Ll}{\underline{l}}
\newcommand{\bo}{\boldsymbol{1}}
\newcommand{\blam}{\boldsymbol{\lambda}}
\newcommand{\bmu}{\boldsymbol{\mu}}
\newcommand{\Vect}{\mathrm{Vect}}
\newcommand{\tdim}{\mathrm{dim}}
\newcommand{\Hom}{\mathrm{Hom}}
\newcommand{\Ext}{\mathrm{Ext}}
\newcommand{\Sym}{\mathrm{Sym}}
\newcommand{\tdet}{\mathrm{det}}
\newcommand{\Rm}{\mathrm}
\newcommand{\GLL}{\mathrm{GL}}
\newcommand{\Gr}{\mathrm{Gr}}
\newcommand{\Fl}{\mathrm{Fl}}
\newcommand{\rk}{\text{rank}}
\newcommand{\sub}{\mathrm{sub}}
\newcommand{\quo}{\mathrm{quo}}
\newcommand{\coker}{\mathrm{coker}}
\newcommand{\mini}{\mathrm{min}}
\newtheorem{theorem}{Theorem}[section]
\newtheorem{lemma}[theorem]{Lemma}
\newtheorem{proposition}[theorem]{Proposition}
\newtheorem{corollary}[theorem]{Corollary}
\theoremstyle{definition}
\newtheorem{definition}[theorem]{Definition}
\theoremstyle{remark}
\newtheorem{remark}[theorem]{Remark}
\numberwithin{equation}{section}
\title[Semiorthogonal decomposition via categorical action]{Semiorthogonal decomposition via categorical action}
\begin{document}
	
\emergencystretch 3em
	
\address{Academia Sinica} \email{youhunghsu@gate.sinica.edu.tw}

\author[You-Hung Hsu]{You-Hung Hsu}
	
\keywords{Derived category, semiorthogonal decomposition, Kapranov exceptional collection, categorification}
	
\makeatletter
\@namedef{subjclassname@2020}{%
	\textup{2020} Mathematics Subject Classification}
\makeatother
	
\subjclass[2020]{14M15, 18N25, 18G80}
	
\maketitle
	
\begin{abstract}
	We show that the categorical action of the shifted $q=0$ affine algebra can be used to construct semiorthogonal decomposition on the weight categories. In particular, this construction recovers Kapranov's exceptional collection when the weight categories are the derived categories of coherent sheaves on Grassmannians and $n$-step partial flag varieties. Finally, as an application, we use this result to construct a semiorthogonal decomposition on the derived categories of coherent sheaves on Grassmannians of a coherent sheaf with homological dimension $\leq 1$ over a smooth projective variety $X$. 
\end{abstract}

\section{Introduction}

\subsection{Overview of semiorthogonal decompostions}
Let $X$ be a smooth complex projective variety. The bounded derived category of coherent sheaves, denoted by $\Dd^b(X)$, is an essential invariant of $X$. One fundamental method to study the structure of $\Dd^b(X)$ is via semiorthogonal decomposition, which divides a triangulated category into simpler pieces.

One way to produce a semiorthogonal decomposition is from an exceptional collection. The simplest example is Beilinson's collection for complex projective space $\PP^n$ \cite{Be}, which is later generalized to Grassmannians and type A partial flag varieties by Kapranov \cite{Kap85}, \cite{Kap88} and  Kuznetsov-Polishchuk \cite{KP} for isotropic partial flag varieties. For other constructions and examples, we refer to Kuznetsov's ICM address \cite{Ku1}, \cite{Ku2}.



\subsection{Categorical actions}
During the past decade, categorification has been an active research topic in representation theory and related areas. One of the important questions is lifting the representations of Lie algebras/quantum groups from vector spaces to categories. Such a notion is called the \textit{categorical actions}. 

Initiating from the work of Chuang-Rouquier \cite{CR} for the $\SL_{2}$-categorification, people have extensively studied the categorical actions of $\SL_{2}$ or $\Uu_{q}(\SL_{2})$ in several flavours. In particular, Cautis-Kamnitzer-Licata \cite{CKL1} introduced the notion of geometric categorical $\SL_{2}$ action in the setting of bounded derived categories of coherent sheaves. Later they \cite{CKL2} used the geometric categorical $\SL_{2}$ action to construct the derived equivalence between derived categories of coherent sheaves on cotangent bundles to complementary Grassmannians. 

The lifting of representations to the categorical level also rises other questions. For the categorical action of semisimple or Kac-Moody Lie algebra $\cg$, usually one assigns to each weight space $V(\lambda)$ an additive category $\Cc(\lambda)$ and to
generators $e_{i}$, $f_{i}$ of $\cg$ one assigns functors $\E_{i}:\Cc(\lambda) \rightarrow \Cc(\lambda+\alpha_{i})$, $\F_{i}:\Cc(\lambda+\alpha_{i}) \rightarrow \Cc(\lambda)$ respectively. These functors are then required to satisfy certain relations analogous to those in $\cg$. For example, when $\cg=\SL_{2}$ the relation $(ef-fe)|_{V(\lambda)}=\lambda\text{Id}_{V(\lambda)}$ becomes
\begin{equation} \label{eq}
	{\E\F}|_{\Cc(\lambda)} \cong {\F\E}|_{\Cc(\lambda)} \bigoplus \text{Id}_{\Cc(\lambda)}^{\oplus \lambda} \ \text{if} \ \lambda \geq 0,
\end{equation} similarly for $\lambda \leq 0$.

The equation (\ref{eq}) is an isomorphism between functors. Understanding the natural transformations between the functors in (\ref{eq}) is a central problem in the categorification of Lie algebras/quantum groups or higher representation theory. Moreover, the natural transformations should ideally induce the isomorphism in (\ref{eq}). One answer to such a problem is given by Chuang-Rouquier \cite{CR} for the Lie algebra $\SL_{2}$ and later generalized to (simply-laced) Kac-Moody algebras $\cg$ by Khovanov-Lauda \cite{KL1}, \cite{KL2}, \cite{KL3} and Rouquier \cite{R}.

\subsection{Main results}

In \cite{Hsu}, the author defines an algebra called the shifted $q=0$ affine algebra, denoted by $\dot{\Uu}_{0,N}(L\SL_{n})$, and gives a definition of its categorical action. Then he proves that there is a categorical action of the shifted $q=0$ affine algebra on the bounded derived categories of coherent sheaves on $n$-step partial flag varieties.

In this article, we try to relate the notion of semiorthogonal decomposition to the categorical action of $\dot{\Uu}_{0,N}(L\SL_{n})$. The idea comes from the observation that the Kapranov exceptional collection can be thought of as convolutions of Fourier-Mukai kernels via the categorical action of $\dot{\Uu}_{0,N}(L\SL_{n})$.

\subsubsection{The motivating example: Grassmannians.} \label{subsub1.3}

Let $N \geq 2$ be a positive integer and fix the $N$-dimensional vector space $\CC^N$. Considering the Grassmannian of $k$-dimensional  subspaces in $\CC^N$, denoted by $\Gr(k,\CC^N)$. More precisely, $\Gr(k,\CC^N)=\{ 0 \subset V \subset \CC^N \  | \ \dim V=k  \}$. Let us recall the action of $\dot{\Uu}_{0,N}(L\SL_{2})$ on $\bigoplus_{k}\Dd^b(\Gr(k,\CC^N))$ in \cite{Hsu}. The generators  $e_{r}1_{(k,N-k)}$, $f_{s}1_{(k,N-k)}$ act on $\bigoplus_{k} \Dd^b(\Gr(k,\CC^N))$ by the following correspondence diagram 
\begin{equation}  \label{dia}
	\xymatrix{ 
		&&\Fl(k-1,k)=\{0 \overset{k-1}{\subset} V' \overset{1}{\subset} V \overset{N-k}{\subset} \CC^N \} 
		\ar[ld]_{p_1} \ar[rd]^{p_2}   \\
		& \Gr(k,\CC^N)  && \Gr(k-1,\CC^N)
	}
\end{equation}  where $\Fl(k-1,k)$ is the 3-step partial flag variety and $p_{1}$, $p_{2}$ are the natural projections. Denoting $\V, \ \V'$ to be the tautological sub-bundles on $\Fl(k-1,k)$ of rank $k$, $k-1$ respectively. Then the quotient $\V/\V'$ is a natural line bundle on $\Fl(k-1,k)$.  The generators $e_{r}1_{(k,N-k)}$ act on $\bigoplus_{k} \Dd^b(\Gr(k,\CC^N))$ by lifting to the following functors 
\begin{equation*}
{\E}_{r}\bo_{(N-k,k)}:=p_{2*}(p_{1}^{*}\otimes (\V/\V')^r):\Dd^{b}(\Gr(k,\CC^N)) \rightarrow \Dd^{b}(\Gr(k-1,\CC^N))
\end{equation*} and similarly for the lift of $f_{s}1_{(k,N-k)}$ to ${\F}_{s}\bo_{(k,N-k)}$. We refer to Section \ref{section 4} for more details.

Due to the work by Kapranov \cite{Kap85}, there are some exceptional collections on $\Dd^b(\Gr(k,\CC^N))$. One of them is given by $\{\s_{\blam}\V\}$, where $\V$ is the universal  sub-bundle of rank $k$ and $\s_{\blam}$ is the Schur functor associated to Young diagrams $\blam=(\lambda_{1},...,\lambda_{k})$ with $0 \leq \lambda_{k} \leq ... \leq \lambda_{1} \leq N-k$.  Moreover, we denote the set of such Young diagrams to be $ P(N-k,k)$.


Since $\Gr(0,\CC^N)$ is a point, $\s_{\blam}\V$ can be viewed as an object in $\Dd^b(\Gr(0,\CC^N) \times \Gr(k,\CC^N))$. Also, the functors ${\E}_{r}\bo_{(k,N-k)}$ and ${\F}_{s}\bo_{(k,N-k)}$ are Fourier-Mukai transformations with kernels denoted by $\Ee_{r}\bo_{(k,N-k)}$ and $\Ff_{s}\bo_{(k,N-k)}$, respectively. Then by the Borel-Weil-Bott theorem, we have 
\begin{equation} \label{eq 16} 
	\s_{\blam}\V \cong \Ff_{\lambda_{1}} \ast ... \ast \Ff_{\lambda_{k}} \bo_{(0,N)}  \in \Dd^b(\Gr(0,\CC^N)\times \Gr(k,\CC^N))
\end{equation} where we denote $\ast$ to be the convolution product of Fourier-Mukai kernels.

 
From (\ref{eq 16}) we know that $\Ff_{\lambda_{1}} \ast ... \ast \Ff_{\lambda_{k}} \bo_{(0,N)}$ is a 
Fourier-Mukai kernel of the functor 
\begin{equation*}
{\F}_{\blam}\bo_{(0,N)}:={\F}_{\lambda_{1}}...{\F}_{\lambda_{k}}\bo_{(0,N)} \in \Hom(\Dd^b(\Gr(0,\CC^N)),\Dd^b(\Gr(k,\CC^N))),
\end{equation*} and since the categorical action of $\dot{\Uu}_{0,N}(L\SL_{2})$ can be defined abstractly; it is natural to ask the following question  \\

$\boldsymbol{Question:}$ Given a categorical $\dot{\Uu}_{0,N}(L\SL_{2})$ action $\Kk$. Do the functors 
\begin{equation*}
\{{\F}_{\blam}\bo_{(0,N)}:={\F}_{\lambda_{1}}...{\F}_{\lambda_{k}}\bo_{(0,N)}\}_{\blam \in P(N-k,k)}
\end{equation*} behave like an exceptional collection in the triangulated category $\Hom(\Kk(0,N),\Kk(k,N-k))$?

\subsubsection{Statement of the main results}
It turns out that the answer is affirmative except that the functors $\{ {\F}_{\blam}\bo_{(0,N)}\}_{\lambda \in P(N-k,k)}$ are not exceptional. Since the above discussion of motivating example can be generalized from Grassmannians to $n$-step partial flag varieties $\Fl_{\kk}(\CC^N)$ where $\kk=(k_{1},...,k_{n}) \in \NN^n$ with $\sum_{i=1}^{n}k_{i}=N$. We state our main result in full generality.

\begin{theorem} (Theorem \ref{Proposition 4}) \label{Proposition 5}
Given a categorical $\dot{\Uu}_{0,N}(L\SL_n)$ action $\Kk$. Considering the functors ${\F}_{1,\blam(1)}{\F}_{2,\blam(2)} ... {\F}_{n-1, \blam(n-1)} \bo_{\eta} \in \Hom(\Kk(\eta),\Kk(\kk))$ where 
\begin{equation*}
	{\F}_{i,\blam(i)}:= {\F}_{i, \lambda(i)_1} {\F}_{i, \lambda(i)_2} ...  {\F}_{i, \lambda(i)_{\Bbbk_{i}}} 
\end{equation*} with $\blam(i)=(\lambda(i)_{1},...,\lambda(i)_{\Bbbk_{i}}) \in P(k_{i+1},\Bbbk_{i})$, $\Bbbk_{i}=\sum_{j=1}^{i}k_{j}$ for all $1 \leq i \leq n-1$, and $\eta=(0,0,...,0,N)$ is the highest weight. Then they satisfy the following properties 
\begin{enumerate}
	\item $\Hom({\F}_{1,\blam(1)}{\F}_{2,\blam(2)} ... {\F}_{n-1, \blam(n-1)} \bo_{\eta},{\F}_{1,\blam(1)}{\F}_{2,\blam(2)} ... {\F}_{n-1, \blam(n-1)}\bo_{\eta}) \cong \Hom(\bo_{\eta},\bo_{\eta})$ 
	\item   $\Hom({\F}_{1,\blam(1)}{\F}_{2,\blam(2)} ... {\F}_{n-1, \blam(n-1)} \bo_{\eta},{\F}_{1,\blam(1)'}{\F}_{2,\blam(2)'} ... {\F}_{n-1, \blam(n-1)'}\bo_{\eta}) \cong 0$ if $(\blam(1),...,\blam(n-1)) <_{pl}  (\blam(1)',...,\blam(n-1)')$ where $<_{pl}$ denotes the product lexicographic order; i.e., there exist $1 \leq i \leq n-1$ such that $\blam(j)=\blam(j)'$ for all $0 \leq j \leq i-1$ and $\blam(i) <_{l} \blam(i)'$ and $<_{l}$ denotes the lexicographic order.
\end{enumerate}
\end{theorem}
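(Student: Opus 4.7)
The strategy is to reduce both Hom computations to the highest-weight object $\bo_\eta$ using the adjunctions and the commutation relations of the categorical $\dot{\Uu}_{0,N}(L\SL_n)$ action. In any such action, each $\F_{i,s}$ admits a (shift of an) $\E_{i,s}$ as an adjoint. Iterating adjunction on the left-hand factors of
\[
\Hom\bigl({\F}_{1,\blam(1)}\cdots{\F}_{n-1,\blam(n-1)}\bo_\eta,\ {\F}_{1,\blam(1)'}\cdots{\F}_{n-1,\blam(n-1)'}\bo_\eta\bigr)
\]
converts this into a Hom out of $\bo_\eta$ whose target is a composite of $\E$'s applied (in reverse order) to the right-hand composite of $\F$'s acting on $\bo_\eta$.

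Next, I would invoke the commutation relations of $\dot{\Uu}_{0,N}(L\SL_n)$ established in \cite{Hsu} to push every $\E_{i,r}$ past every $\F_{j,s}$ on its right. When $i\neq j$ the two generators essentially commute; when $i=j$ one has a Pieri-type identity of the schematic form $\E_{i,r}\F_{i,s}\simeq \F_{i,s}\E_{i,r}\oplus(\text{correction terms})$, analogous to (\ref{eq}), whose correction terms are controlled by the current weight and whose indices are constrained to a finite range. After full expansion, the ``pure'' term $\F\cdots\F\,\E\cdots\E\,\bo_\eta$ vanishes since $\E_{i,r}\bo_\eta=0$ at the highest weight $\eta$. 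Therefore only summands in which every $\E$ is absorbed against some $\F$ through a correction term can survive.

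Each surviving summand is then indexed by a combinatorial pairing between entries of $(\blam(1),\ldots,\blam(n-1))$ and entries of $(\blam(1)',\ldots,\blam(n-1)')$. By induction on $n$ and on the sizes of the partitions, I would verify that when $\blam=\blam'$ the only pairing that consumes every $\E$ is the diagonal one, and that its total contribution collapses to $\Hom(\bo_\eta,\bo_\eta)$; while if $\blam<_{pl}\blam'$, then at the first index $i$ where $\blam(i)<_{l}\blam(i)'$ no admissible pairing exists that absorbs every $\E$, so at least one residual $\E$ reaches $\bo_\eta$ and the Hom vanishes. The constraint $\blam(i)\in P(k_{i+1},\Bbbk_i)$ is essential, as it keeps each $\F_{i,\lambda(i)_j}$ in its admissible weight range and dictates precisely which correction terms in the Pieri-type identity are nonzero. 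A natural scheme is to process the block $\blam(1)$ first using the $\SL_2$-subalgebra generated by $\E_{1,r},\F_{1,s}$ together with the Grassmannian model of Section \ref{subsub1.3}, and then iterate on $(\blam(2),\ldots,\blam(n-1))$; the $n=2$ base case recovers Kapranov's collection $\{\s_{\blam}\V\}$ via the identification (\ref{eq 16}).

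The main obstacle will be the bookkeeping in this last step: proving that the product lexicographic order is the \emph{sharp} obstruction to a successful pairing, i.e.\ that strict lex-precedence of $\blam$ over $\blam'$ is exactly what forces a stray $\E$ onto $\bo_\eta$. Concretely, one needs a matching rule on the weakly decreasing entries of each $\blam(i)$ compatible with the weight constraints on the correction terms, so that when $\blam(i)_1=\blam(i)'_1,\ldots,\blam(i)_{j-1}=\blam(i)'_{j-1}$ but $\blam(i)_j<\blam(i)'_j$, the required Pieri index simply does not exist. Verifying this inequality combinatorially, uniformly in the higher blocks $\blam(i+1),\ldots,\blam(n-1)$, is the technical heart of the argument.
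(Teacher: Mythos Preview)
Your overall strategy---pass to adjoints, then reduce to an $\SL_2$ computation block by block---is exactly right and matches the paper. But there is a genuine gap in the mechanism you propose for the $\SL_2$ step.

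You write the key commutation as a direct-sum ``Pieri identity'' $\E_{i,r}\F_{i,s}\simeq \F_{i,s}\E_{i,r}\oplus(\text{corrections})$, analogous to the classical $\SL_2$ relation~(\ref{eq}). In the $q=0$ affine setting this is not what one has: the categorical relations in Definition~\ref{definition 2}(6) give \emph{exact triangles}
\[
\F_{i,s}\E_{i,r}\bo_{\kk}\to \E_{i,r}\F_{i,s}\bo_{\kk}\to \spi^{+}_{i}\bo_{\kk}\quad(r+s=k_{i+1}),\qquad
\E_{i,r}\F_{i,s}\bo_{\kk}\to \F_{i,s}\E_{i,r}\bo_{\kk}\to \spi^{-}_{i}\bo_{\kk}\quad(r+s=-k_i),
\]
and genuine isomorphisms (no correction at all) for $-k_i+1\le r+s\le k_{i+1}-1$. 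There is no ``full expansion into summands,'' so the combinatorial-pairing bookkeeping you outline never gets off the ground: with cones instead of direct sums you cannot simply track surviving terms.

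What the paper does instead is compute the right adjoint explicitly via Definition~\ref{definition 2}(4)(b) as an alternating word in $\E$'s and powers of $\spi^{-}$ (Lemma~\ref{ra}), then peel off one $\F_{\lambda_j}$ at a time. At each step one uses (5)(b) to normalize, lands exactly on the boundary case $r+s=-k_i$ of (6)(b), and obtains a triangle. The heart of the argument is a separate vanishing lemma (Lemma~\ref{simplem}) showing that the $\F\E$--term of that triangle, after being propagated leftward through the remaining $\E(\spi^-)^a$ word using the \emph{isomorphism} range of (6)(c), eventually produces an $\E$ acting on the highest weight and hence is zero. The triangle then collapses to an identification, and one iterates. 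For property~(2), the same machinery applies: equality of the first $i{-}1$ blocks reduces to the $\SL_2$ situation at block $i$, and the strict inequality $\lambda(i)_j<\lambda(i)'_j$ forces the relevant index into the isomorphism range of (6)(c) \emph{immediately}, so the whole composite commutes past to an $\E$ on $\bo_\eta$ and dies. No matching combinatorics is needed; the sharpness of the lexicographic condition comes straight from the numerical window in (6)(c).
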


We state two corollaries for the above theorem. The following is the first one, which is easily seen from property (1). 

\begin{corollary}(Corollary \ref{Corollary 2}) \label{Corollary 3}
Given a categorical $\dot{\Uu}_{0,N}(L\SL_n)$ action $\Kk$. The functors 
\begin{equation*}
{\F}_{1,\blam(1)}{\F}_{2,\blam(2)} ... {\F}_{n-1, \blam(n-1)} \bo_{\eta} \in \Hom(\Kk(\eta),\Kk(\kk))
\end{equation*} where $\blam(i)=(\lambda(i)_{1},...,\lambda(i)_{\Bbbk_{i}})  \in P(k_{i+1},\Bbbk_{i})$ for all $1 \leq i \leq n-1$, are all fully faithful.
\end{corollary}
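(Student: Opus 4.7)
The plan is to deduce full faithfulness of $G := {\F}_{1,\blam(1)}{\F}_{2,\blam(2)}\cdots{\F}_{n-1,\blam(n-1)}\bo_{\eta}$ directly from property (1) of Theorem \ref{Proposition 5}. The crucial observation is that at the highest weight $\eta = (0,\dots,0,N)$ the source category $\Kk(\eta)$ is ``trivial'': in the motivating Grassmannian example it is literally $\Dd^b(\Gr(0,\CC^N)) = \Dd^b(\text{pt})$, and in the abstract categorical $\dot{\Uu}_{0,N}(L\SL_n)$-action $\bo_{\eta}$ serves as a generator of $\Kk(\eta)$ with $\End(\bo_{\eta})$ equal to the ground field concentrated in degree zero.

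Under this identification, a 1-morphism $F : \Kk(\eta) \to \Kk(\kk)$ is the same data as the object $F(\bo_{\eta}) \in \Kk(\kk)$ (its image on the generator), and
\begin{equation*}
\End(F) \;\cong\; \End_{\Kk(\kk)}\bigl(F(\bo_{\eta})\bigr)
\end{equation*}
as graded rings. By the elementary criterion that a functor $\Dd^b(\text{pt}) \to \Dd$ is fully faithful exactly when its value on the generator is an exceptional object, $F$ is fully faithful as a functor of triangulated categories if and only if $F(\bo_{\eta})$ is exceptional, equivalently $\End(F) \cong \End(\bo_{\eta})$. Applying this with $F = G$: property (1) says precisely $\End(G) \cong \End(\bo_{\eta})$, so $G(\bo_{\eta})$ is exceptional and $G$ is fully faithful.

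The main, largely definitional, obstacle is to justify the triviality of $\Kk(\eta)$ at the highest weight so that full faithfulness really does reduce to this single endomorphism computation. In the geometric examples this is automatic since $\Kk(\eta) = \Dd^b(\text{pt})$; in the abstract setting one must verify that it is built into the axioms of the categorical $\dot{\Uu}_{0,N}(L\SL_n)$-action of \cite{Hsu}. An alternative but essentially equivalent route is via adjunction: since each $\F_{i,r}$ has a right adjoint, $G$ has a right adjoint $G^R$, and $G$ is fully faithful iff the unit $\bo_{\eta} \to G^R G$ is an isomorphism. Property (1) together with the Hom-tensor adjunction forces $\Hom(\bo_{\eta}, G^R G) \cong \End(\bo_{\eta})$, and the highest-weight vanishing of $\E_i\bo_{\eta}$ (the abstract shadow of Borel--Weil--Bott) forces $G^R G$ to decompose as a sum of shifts of $\bo_{\eta}$; the Hom-count then pins the decomposition down to $\bo_{\eta}$ itself.
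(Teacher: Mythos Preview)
Your main argument has a genuine gap: the assumption that $\Kk(\eta)$ is ``trivial'' with $\End(\bo_\eta)$ equal to the ground field in degree zero is \emph{not} part of the axioms of a (partial) categorical $\dot{\Uu}_{0,N}(L\SL_n)$ action in Definition~\ref{definition 2}, and it fails in the paper's central application. In Section~5 the weight categories are $\Kk(N-k,k) = \Dd^b(\Gr(\GGG,k))$ for a coherent sheaf $\GGG$ over a smooth projective variety $X$, so the highest-weight category is $\Kk(0,N) = \Dd^b(\Gr(\GGG,N)) = \Dd^b(X)$, not $\Dd^b(\mathrm{pt})$. Remark~\ref{remark 2} spells this out: $\Hom(\bo_\eta,\bo_\eta) \cong HH^*(X)$, and the functors $G$ are explicitly \emph{not} exceptional as objects of the functor category. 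Hence the reduction ``$G$ fully faithful $\Leftrightarrow$ $G(\bo_\eta)$ exceptional'' is false in this generality, and the Hom-count in your alternative route cannot pin down $G^R G$ either: when $\End(\bo_\eta)$ has higher-degree pieces, distinct direct sums of shifts of $\bo_\eta$ can have the same total $\Hom$ from $\bo_\eta$.

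What the paper actually uses is stronger than the bare statement of property~(1). In the course of proving Theorem~\ref{Proposition 3} (and hence Theorem~\ref{Proposition 4}), the right adjoint is written out via Lemma~\ref{ra} and then $G^R G$ is simplified step by step using relations (4)(b), (5)(b), (6)(b)(c) of Definition~\ref{definition 2} together with the vanishing $\E_{r}\bo_{(0,N)} \cong 0$; the end result is the isomorphism $G^R G \cong \bo_\eta$ itself, not merely an equality of $\Hom$-spaces. Full faithfulness is read off from this. Your alternative route is in the right spirit --- the key really is $G^R G \cong \bo_\eta$ --- but the paper obtains it by direct computation rather than by a decomposition-plus-Hom-count argument, and no triviality of $\Kk(\eta)$ is needed or assumed.
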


Next, property (2) from Theorem \ref{Proposition 5} tells us that the functors ${\F}_{1,\blam(1)}{\F}_{2,\blam(2)} ... {\F}_{n-1, \blam(n-1)} \bo_{\eta}$ satisfy the semiorthogonal property. Combining with Corollary \ref{Corollary 3}, we obtain the second corollary.

\begin{corollary} (Corollary  \ref{Theorem 5})
Given a categorical $\dot{\Uu}_{0,N}(L\SL_n)$ action $\Kk$. We denote
\begin{equation*}
\textnormal{Im}{\F}_{1,\blam(1)}{\F}_{2,\blam(2)} ... {\F}_{n-1, \blam(n-1)} \bo_{\eta}
\end{equation*}to be the minimal full triangulated subcategories of $\Kk(\kk)$ generated by the class of objects which are the essential images of ${\F}_{1,\blam(1)}{\F}_{2,\blam(2)} ... {\F}_{n-1, \blam(n-1)} \bo_{\eta}$ where $\blam(i)=(\lambda(i)_{1},...,\lambda(i)_{\Bbbk_{i}})  \in P(k_{i+1},\Bbbk_{i})$
for all $1 \leq i \leq n-1$. Then we have the following semiorthogonal decomposition 
\begin{equation*}
	\Kk(\kk)=\langle \Aa(\kk), \textnormal{Im}{\F}_{1,\blam(1)}{\F}_{2,\blam(2)} ... {\F}_{n-1, \blam(n-1)} \bo_{\eta} \rangle_{\blam(i) \in P(k_{i+1},\Bbbk_{i})}
\end{equation*} where $\Aa(\kk)=\langle \textnormal{Im}{\F}_{1,\blam(1)}{\F}_{2,\blam(2)} ... {\F}_{n-1, \blam(n-1)} \bo_{\eta}\rangle_{\blam(i) \in P(k_{i+1},\Bbbk_{i})}^{\perp}$ is the orthogonal complement.
\end{corollary}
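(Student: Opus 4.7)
The plan is to combine the two ingredients already established in the theorem and its first corollary: full faithfulness of each functor $\F_\blam := \F_{1,\blam(1)}\cdots\F_{n-1,\blam(n-1)}\bo_\eta$ (Corollary \ref{Corollary 3}) and the Hom-vanishing in the product lexicographic order (property~(2) of Theorem \ref{Proposition 5}). Full faithfulness tells us that each $\textnormal{Im}\,\F_\blam$ is a full triangulated subcategory of $\Kk(\kk)$ equivalent to the triangulated hull of $\F_\blam(\Kk(\eta))$, and is admissible because the generating functors $\F_{i,r}$ of the categorical $\dot{\Uu}_{0,N}(L\SL_n)$-action come equipped with both left and right adjoints as part of the structure.

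Next I would upgrade the pointwise vanishing $\Hom(\F_\blam\bo_\eta,\F_{\blam'}\bo_\eta)=0$ (for $\blam <_{pl} \blam'$) to the vanishing of $\Hom_{\Kk(\kk)}(A,B)$ for every $A \in \textnormal{Im}\,\F_\blam$ and $B \in \textnormal{Im}\,\F_{\blam'}$. This is a standard d\'evissage: Hom-vanishing on a class of generators propagates through shifts and cones to the triangulated envelope. Ordering the tuples $\blam$ in decreasing product lex order then gives a semiorthogonal collection of admissible subcategories of $\Kk(\kk)$. Setting $\Aa(\kk) := \langle \textnormal{Im}\,\F_\blam\rangle_\blam^\perp$ as the right orthogonal complement of the triangulated subcategory they collectively generate automatically places $\Aa(\kk)$ on the far left of the collection, yielding the desired decomposition
\begin{equation*}
\Kk(\kk) = \langle \Aa(\kk),\, \textnormal{Im}\,\F_{1,\blam(1)}\cdots\F_{n-1,\blam(n-1)}\bo_\eta\rangle_{\blam(i) \in P(k_{i+1},\Bbbk_i)}.
\end{equation*}

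The main obstacle I expect is the admissibility step. Abstractly one must ensure that the subcategory $\langle \textnormal{Im}\,\F_\blam\rangle$ is admissible in $\Kk(\kk)$, so that the orthogonal complement $\Aa(\kk)$ actually completes a semiorthogonal decomposition rather than merely records a semiorthogonal pair of subcategories. This reduces to admissibility of each individual $\textnormal{Im}\,\F_\blam$ (provided by the adjoints $\E_{i,r}$ built into the categorical action, together with the stability of admissible embeddings under composition) combined with the stability of admissibility under taking triangulated joins of mutually semiorthogonal admissible subcategories. Once admissibility is secured, the semiorthogonal decomposition is formal from the definition of $\Aa(\kk)$ as the orthogonal complement.
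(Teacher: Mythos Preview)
Your proposal is correct and matches the paper's own argument essentially step for step: the paper proves the $\SL_2$ case (Corollary~\ref{Theorem 4}) by showing that $\{\textnormal{Im}\,{\F}_{\blam}\bo_{(0,N)}\}$ is a semiorthogonal sequence of admissible subcategories (semiorthogonality from property~(2) of Theorem~\ref{Proposition 3}, admissibility from the existence of both adjoints $({\F}_{\blam})^{L}$, $({\F}_{\blam})^{R}$ in the categorical action) and then invokes Bondal's Lemma~\ref{lemma 2} to insert the orthogonal complement $\Aa(\kk)$ on the left; the $\SL_n$ case is declared identical. Your d\'evissage and admissibility-under-joins remarks make explicit what the paper leaves to the cited lemma, but the strategy is the same.
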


\begin{remark}
When $n=2$ and the weight categories are $\Kk(k,N-k)=\Dd^b(\Gr(k,\CC^N))$, we have $\Hom(\bo_{(0,N)},\bo_{(0,N)}) \cong \CC$. Then $\{ {\F}_{\blam}\bo_{(0,N)}\}$ recovers the Kapranov exceptional collection which mentioned in the Subsection \ref{subsub1.3}.
\end{remark}


\subsection{Application to Grassmannian of coherent sheaves}
In the final part of this paper, we give an application of the main theorem to construct semiorthogonal decomposition on the derived category of coherent sheaves on Grassmannians (more precisely, relative Quot scheme) of coherent sheaf with homological dimension $\leq 1$.

Let $X$ be a connected smooth projective variety and $\GGG$ be a coherent sheaf on $X$ of homological dimension $\leq 1$, i.e. there is a resolution $\Ee^{-1} \rightarrow \Ee^{0} \twoheadrightarrow \GGG \rightarrow 0$ where $\Ee^{-1}$, $\Ee^{0}$ are locally free.  Let $N=\rk \GGG=\rk \Ee^0 -\rk \Ee^{-1} \geq 2$ be the rank of $\GGG$. We consider the Grassmannians $\Gr(\GGG, k)$ of rank $k$ locally free quotients of $\GGG$ and its derived category $\Dd^b(\Gr(\GGG,k))$. 

This geometric space $\Gr(\GGG,k)$ appears in several literatures, and many interesting moduli spaces (related to enumerative geometry) arise via this construction. For example, Addington-Takahashi \cite{AT} use such spaces as correspondences to construct categorical $\SL_{2}$ action on derived categories of coherent sheaves on moduli space of sheaves on K3 surface (see Negut \cite{N} for related work).

Like the usual Grassmannians, constructing semiorthogonal decomposition for $\Dd^b(\Gr(\GGG,k))$ becomes a natural question. It was already done by Jiang-Leung \cite{JL} for $k=1$ (which is called the \textit{projectivization formula}) and Toda \cite{T} for general $k$ (which is called the \textit{Quot formula}).

Here we provide a different construction by using the categorical action of $\dot{\Uu}_{0,N}(L\SL_{2})$. We define functors ${\E}_{r}$, ${\F}_{s}$ as Fourier-Mukai transforms via correspondence that is similar to the diagram (\ref{dia}). Then we verify those functors satisfy the conditions in Definition \ref{definition 2} and thus we can apply Theorem \ref{Proposition 5} to obtain the following result.

\begin{theorem} [Theorem \ref{Theorem 7}]
The derived category $\Dd^b(\Gr(\GGG,k))$ admits the following two semiorthogonal decompositions
	\begin{align*}
		\Dd^b(\Gr(\GGG,k)) &= \langle \Aa(N-k,k), \ \textnormal{Im}{\F}_{\blam}\bo_{(0,N)}  \rangle_{\blam \in P(k,N-k)} \\
		&=\langle \Bb(N-k,k), \ \textnormal{Im}{\E}_{-\bmu}\bo_{(N,0)} \rangle_{\bmu \in P(N-k,k)}
	\end{align*} where $\Aa(N-k,k):=\langle \textnormal{Im}{\F}_{\blam}\bo_{(0,N)} \rangle_{\blam \in P(k,N-k)}^{\perp}$, $\Bb(N-k,k):=\langle \textnormal{Im}{\E}_{-\bmu}\bo_{(N,0)} \rangle_{\bmu \in P(N-k,k)}^{\perp}$ are the orthogonal complement.
\end{theorem}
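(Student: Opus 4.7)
The plan is to realize $\bigoplus_{k=0}^{N}\Dd^b(\Gr(\GGG,k))$ as a categorical $\dot{\Uu}_{0,N}(L\SL_2)$-action in the sense of Definition \ref{definition 2}, and then invoke Theorem \ref{Proposition 5} with $n=2$ together with Corollary \ref{Corollary 3} and Corollary \ref{Theorem 5}. First I would introduce the correspondence analogous to the diagram (\ref{dia}). Let $\Fl(\GGG;k-1,k)$ denote the relative nested Quot scheme parameterizing flags $\GGG \twoheadrightarrow \Qq \twoheadrightarrow \Qq'$ of locally free quotients of ranks $k$ and $k-1$, with natural projections $p_1:\Fl(\GGG;k-1,k)\to \Gr(\GGG,k)$ and $p_2:\Fl(\GGG;k-1,k)\to \Gr(\GGG,k-1)$. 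The kernel of $\Qq \twoheadrightarrow \Qq'$ is a tautological line bundle $\LL$, and from $p_1$, $p_2$, $\LL$ one builds Fourier-Mukai kernels $\Ee_r\bo_{(N-k,k)}$ and $\Ff_s\bo_{(k,N-k)}$ by the same formulas as in Section \ref{section 4} for ordinary Grassmannians.

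The crux of the argument is the verification that these kernels satisfy the relations of Definition \ref{definition 2}. The key geometric input is that, since $\GGG$ has homological dimension $\leq 1$, the resolution $\Ee^{-1}\to \Ee^0 \twoheadrightarrow \GGG$ realizes $\Gr(\GGG,k)$ as a (possibly non-smooth) closed subscheme of the ordinary relative Grassmannian $\Gr(\Ee^0,k)\to X$, with analogous statements for the nested variants. I will exploit this together with derived base change and the projection formula to reduce each relation on the $\Gr(\GGG,-)$ side to its counterpart on $\Gr(\Ee^0,-)$, where the action of $\dot{\Uu}_{0,N}(L\SL_2)$ is already established in \cite{Hsu}. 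Borel--Weil--Bott along the Grassmannian fibers will handle the required pushforwards, after which descent to $X$ is harmless.

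The main obstacle will be the failure of smoothness and the accompanying derived base-change subtleties: the nested Quot schemes $\Fl(\GGG;k-1,k)$ and their iterates need not be smooth or of the expected dimension where the resolution of $\GGG$ degenerates. The hypothesis of homological dimension $\leq 1$ is precisely what guarantees the Tor-vanishing statements (as exploited by Jiang--Leung \cite{JL} and Toda \cite{T}) needed to replace each naive fiber square by a derived fiber square admitting honest base change. Once the categorical action is in place, the first semiorthogonal decomposition follows from Theorem \ref{Proposition 5} with $\eta=(0,N)$ together with Corollary \ref{Corollary 3} and Corollary \ref{Theorem 5}; the second decomposition, in terms of $\E_{-\bmu}\bo_{(N,0)}$, is obtained by running the same argument with the roles of highest and lowest weights interchanged, which is permitted by the built-in symmetry of the categorical $\dot{\Uu}_{0,N}(L\SL_2)$-action swapping $\E$ and $\F$.
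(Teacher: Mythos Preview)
Your overall architecture is correct and matches the paper: one defines Fourier--Mukai kernels $\Ee_r\bo_{(N-k,k)}$, $\Ff_s\bo_{(N-k,k)}$, $\Psi^\pm\bo_{(N-k,k)}$ using the correspondence $\Fl(\GGG,k+1,k)$ and the tautological line bundle $\ker(\rho)$, verifies the conditions of Definition~\ref{definition 2}, and then invokes Theorem~\ref{Proposition 3}, Theorem~\ref{Theorem 6} and their corollaries. The paper does exactly this.

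Where your proposal diverges, and where it has a gap, is in the \emph{verification} of condition~(6). You propose to reduce to $\Gr(\Ee^0,-)$ via the closed embedding $\Gr(\GGG,k)\hookrightarrow\Gr(\Ee^0,k)$ and then quote the locally free case. This does not work as stated. First, the categorical action on $\bigoplus_k\Dd^b(\Gr(\Ee^0,k))$ is a $\dot{\Uu}_{0,\,\rk\Ee^0}(L\SL_2)$-action, not a $\dot{\Uu}_{0,N}(L\SL_2)$-action; the commutator relations in condition~(6) depend on the rank through the range $-k_i+1\le r+s\le k_{i+1}-1$, so the relations you would inherit are the wrong ones. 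Second, convolutions of kernels on $\Gr(\GGG,-)$ are \emph{not} restrictions of the corresponding convolutions on $\Gr(\Ee^0,-)$: the intermediate pushforward is over $\Gr(\GGG,k\pm 1)$, not $\Gr(\Ee^0,k\pm 1)$, and derived base change along the closed embedding introduces Tor terms that you have not controlled. Your appeal to Borel--Weil--Bott is similarly misplaced, since the fibers of $\Gr(\GGG,k)\to X$ are not Grassmannians of a fixed vector space over the degeneracy locus of $\GGG$.

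The paper instead verifies condition~(6) \emph{directly} on $\Gr(\GGG,-)$. One computes $(\Ee_r\ast\Ff_s)\bo_{(N-k,k)}$ and $(\Ff_s\ast\Ee_r)\bo_{(N-k,k)}$ on an explicit fibered product $\Xx=\Zz\times_\Yy\Zz'$, and reduces the comparison to pushing forward $\Oo_{nD}$ twisted by powers of tautological line bundles along a projective bundle. The essential new ingredient is that for the triangle~(\ref{etb}) this projective bundle is $\PP_{\quo}(\Ss)$, where $\Ss$ is the tautological \emph{subsheaf}, which is not locally free but only of homological dimension $\le 1$. The ordinary projective bundle formula (and Borel--Weil--Bott) fails here; the paper uses Zhao's generalized projective bundle formula (Proposition~\ref{Proposition}) for such sheaves, which gives exactly the same vanishing and the same top-degree term $\det(\Ss)^{-1}[1+k-N]$ as in the locally free case, in the range $-N+k\le r\le -1$ that is needed. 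This is where the hypothesis of homological dimension $\le 1$ actually enters, not in a Tor-independence argument for base change.
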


Unlike the results by Jiang-Leung \cite{JL} and Toda \cite{T}, the orthogonal complements $\Aa(N-k,k)$, $\Bb(N-k,k)$ in our Theorem are not clear to see. In future work, we would like to understand the representation-theoretic meaning of those orthogonal complements and see the relation between our construction and Jiang-Leung, Toda.

\subsection{Some further remarks}
Finally, we give some remarks about the results and to other related works.

\subsubsection{The dual exceptional collection}
We also have the notion of dual exceptional collection (see Definition \ref{definition 9} for the definition), which is similar to the dual basis. From Theorem \ref{Theorem 1} the right dual exceptional collection for $\{\s_{\blam}\V\}$ is given by $\{\s_{\bmu}\CC^N/\V[-|\bmu|]\}$ where $\bmu=(\mu_{1},...,\mu_{N-k})$ with $0 \leq \mu_{N-k} \leq ... \leq \mu_{1} \leq k$.

Similarly, we can show that the functors
\begin{equation*} {\E}_{-\bmu}\bo_{(N,0)}:={\E}_{-\mu_{1}}...{\E}_{-\mu_{N-k}}\bo_{(N,0)} \in \Hom(\Kk(N,0),\Kk(k,N-k))
\end{equation*}  also give the same results like Proposition \ref{Proposition 5} and Corollary \ref{Corollary 3}. Thus $\{ {\E}_{-\bmu}\bo_{(N,0)} \}$ can also be used to construct a semiorthogonal decomposition of the weight category $\Kk(k,N-k)$ (see Theorem \ref{Theorem 6}), and when the weight categories are $\Dd^b(\Gr(k,\CC^N))$ we recover the (dual of the) above dual exceptional collection
\begin{equation*}
	\s_{\bmu}(\CC^N/\V)^{\vee}
	\cong \Ee_{-\mu_{1}} \ast ... \ast \Ee_{-\mu_{N-k}} \bo_{(N,0)} \in \Dd^b(\Gr(N,N) \times \Gr(k,N)).
\end{equation*} 

\subsubsection{Characteristic free}
The classical Kapranov exceptional collection had been generalized in another direction to the characteristic-free setting by Buchweitz-Leuschke-Van den Bergh \cite{BLVdB} and Efimov \cite{E}. It would be interesting to see whether our construction can also be applied in a characteristic-free setting.

\subsubsection{Comapre to other categorical actions}

Since the notion of categorical actions of Lie algebras/quantum groups (e.g., $\SL_{2}$ or $\Uu_{q}(\SL_{2})$) has been defined before, people may wonder whether those categorical actions can be used to produce semiorthogonal decompositions like the one in this article. 

To our best knowledge, it seems like there is only little to know about such a question. Also, some definitions of categorical action only require the weight categories to be abelian, for example,  \cite{CR}, where the notion of semiorthogonal decomposition does not exist.

We can address a bit about this question in the case of geometric categorical $\SL_{2}$/$\SL_{n}$ (or $\cg$) actions developed by Cautis-Kamnitzer-Liciata in the series of articles \cite{CKL1}, \cite{CKL2}, \cite{CK}. Note that the main examples of weight categories for such action are the derived categories of coherent sheaves on the cotangent bundle of Grassmannians/partial flag varieties (or Nakajima quiver varieties), and the canonical bundle for those varieties are trivial. This implies that the varieties are Calabi-Yau. Thus their derived categories of coherent sheaves do not have non-trivial semi-orthogonal decompositions.


\subsection{Notations}
For a $n$-tuple of positive integer $\kk=(k_1,...,k_n)$, we denote $\Bbbk_{i}=\sum_{j=1}^{i}k_{j}$ for all $ 1 \leq j \leq n$. A Young diagram will always be denoted by a bold symbol $\blam=(\lambda_{1},...,\lambda_{k})$. We denote $P(a,b)$ to be the set of Young diagrams $\blam$ such that $\lambda_{1} \leq a$ and $\lambda_{b+1} =0$. 

In this paper, we work over the field $\CC$ of complex numbers. For a vector space $V$, we denote $V^*=\Hom_{\CC}(V,\CC)$ to be the dual space. For a coherent sheaf $\Gg$ on an algebraic variety $X$, we denote $\Gg^{\vee}=\Hh om_{\Oo_{X}}(\Gg,\Oo_{X})$ to be the dual coherent sheaf. Let $\dim_{\CC} V=N$ and $k \leq N$. Then we denote $\Gr(k,V)$ to be the Grassmannian of $k$-dimensional subspace of $V$ and $\Gr(V,k)$ to be the Grassmannian of $k$-dimensional quotient spaces of $V$. They are related by $\Gr(k,V)=\Gr(V,N-k)$, and $\Gr(k,V)=\Gr(N-k,V^*)=\Gr(V^*,k)$. In particular, when $k=1$, we have the two projective space $\Gr(1,V)$, $\Gr(V,1)$ parametrizing $1$-dimensional subspace, quotient space, respectively. We use the notations $\PP_{\sub}(V)=\Gr(1,V)$ and $\PP_{\quo}(V)=\Gr(V,1)$ to distinguish them. Moreover, the above construction can be generalized to vector bundle $\V$ of rank $N$ on $X$, and similarly, we have the Grassmannian bundles $\Gr(k,\V)$, $\Gr(\V,k)$,...etc on $X$.

Finally, for a smooth projective variety $X$, we denote $\Dd^b(X)$ to be its bounded derived category of coherent sheaves. All functors between derived categories will be assumed to be derived. For example, we will write $f_*$ and $\otimes$ instead of $Rf_*$ and $\otimes^{L}$, respectively. 

\subsection{Acknowledgements}
The author would like to thank Wu-yen Chuang and Hsueh-Yung Lin for helpful discussions and also want to thank Chun-Ju Lai for his careful reading of the first draft and for providing useful feedback on the article. Also thanks Yu Zhao and Qingyuan Jiang for some helpful email correspondences. Finally, the author wants to thank the anonymous referee's  feedback and comments on the article.

\section{Preliminaries}
We briefly recall the definitions of semiorthogonal decompositions, Fourier-Mukai transformations, Beilinson-Kapranov exceptional collections, and the projective bundle formula that will be used in the later sections. We follow the book \cite{Huy} for the definitions, and we refer to \cite{Ku1} for a more general review of semiorthogonal decompositions.

\subsection{Semiorthogonal decompositions} 

Let $\Dd$ be a $\CC$-linear triangulated category. 

For a full triangulated subcategory $\Aa \subset \Dd$, we define \begin{align*}
	&\Aa^{\perp}=\{X \in \text{Ob}(\Dd) \ | \  \Hom_{\Dd}(A,X)=0 \  \forall  \ A \in \text{Ob}(\Aa) \}, \\
	&^{\perp}\Aa=\{X \in \text{Ob}(\Dd) \ | \  \Hom_{\Dd}(X,A)=0 \  \forall  \ A \in \text{Ob}(\Aa) \},
\end{align*} to be the right and left orthogonals to $\Aa$ in $\Dd$, respectively. Those are triangulated subcategories of $\Dd$.

Given a class $\Ee$ of objects in $\Dd$, we denoted $\langle \Ee \rangle$ to be the minimal full triangulated subcategory of $\Dd$ containing all objects in $\Ee$ and closed under taking direct summands. Similarly for any sequence of full triangulated subcategories $ \Aa_{1},...,\Aa_{n}$ in $\Dd$ we denote by $\langle \Aa_{1},...,\Aa_{n} \rangle$  the minimal full triangulated subcategory of $\Dd$ which contains all of $\Aa_{i},...,\Aa_{n}$.

\begin{definition} \label{definition 4}
	A  \textit{semiorthogonal decomposition} ($\SOD$ for short) of $\Dd$ is a sequence of full triangulated subcategories $\Aa_{1},...,\Aa_{n}$ such that \begin{enumerate}
		\item there is no non-zero Homs from right to left, i.e.
		$\Hom_{\Dd}(A_{i},A_{j})=0$ for all $A_{i} \in \text{Ob}(\Aa_{i})$, $A_{j} \in \text{Ob}(\Aa_{j})$ where $1 \leq j <  i \leq n$.
		\item $\Dd$ is generated by $\Aa_{1},...,\Aa_{n}$, i.e. the smallest full triangulated subcategory containing $\Aa_{1},...,\Aa_{n}$ is equal to $\Dd$.
	\end{enumerate}

	We will use the notation $\Dd=\langle \Aa_{1},...,\Aa_{n} \rangle$ for a semiorthogonal decomposition of $\Dd$ with components $\Aa_{1},...,\Aa_{n}$.
\end{definition}

\begin{remark}
Note that in some literatures, e.g. \cite{BOR}, the authors may require the subcategories $\Aa_{i} \subset \Dd$ in Definition \ref{definition 4} to be strictly full. However, in this article, we do not need this requirement.
\end{remark}

\begin{definition}  \label{definition 5}
A full triangulated subcategory $\Aa \subset \Dd$ is called  \textit{right admissible} if, for the inclusion functor $i:\Aa \rightarrow \Dd$, there is a right adjoint $i^{!}:\Dd \rightarrow \Aa$, and  \textit{left admissible} if there is a left adjoint $i^*:\Dd \rightarrow \Aa$. It is called  \textit{admissible} if it is both left and right admissible.
\end{definition}

\begin{lemma} [\cite{Bo}] \label{lemma 1} 
	If $\Dd=\langle \Aa, \Bb \rangle$ is a semiorthogonal decomposition, then
	$\Aa$ is left admissible and B is right admissible. Conversely, if $\Aa \subset \Dd$ is left admissible, then $\Dd=\langle \Aa, ^{\perp}\Aa \rangle$ is a semiorthogonal decomposition, and if $\Bb \subset \Dd$ is right admissible,
	then $\Dd=\langle \Bb^{\perp}, \Bb \rangle$ is a semiorthogonal decomposition.
\end{lemma}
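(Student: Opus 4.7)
The plan is to prove both directions by constructing, for any object of $\Dd$, a functorial distinguished triangle that relates it to the two pieces of the decomposition.

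For the forward direction, assume $\Dd = \langle \Aa, \Bb \rangle$ is an SOD. The core claim is that every $X \in \Dd$ fits into a distinguished triangle
\[
B_X \to X \to A_X \to B_X[1]
\]
with $A_X \in \Aa$ and $B_X \in \Bb$. I would prove this by letting $\Cc \subset \Dd$ be the full subcategory of objects admitting such a triangle and arguing $\Cc = \Dd$. The trivial triangles $0 \to A \to A \to 0$ and $B \to B \to 0 \to B[1]$ show $\Aa \cup \Bb \subset \Cc$, and closure under shifts is immediate. The substantive step is closure under cones: given $f : X \to Y$ with $X, Y \in \Cc$, the semiorthogonality $\Hom_\Dd(\Bb, \Aa) = 0$ forces the composition $B_X \to X \to Y \to A_Y$ to vanish, so $B_X \to Y$ lifts through $B_Y \to Y$ to a morphism $B_X \to B_Y$. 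A single application of the octahedral axiom to this lifted morphism of triangles produces a triangle of the required form for $\mathrm{cone}(f)$. Because $\Dd$ is the smallest full triangulated subcategory containing $\Aa$ and $\Bb$, this forces $\Cc = \Dd$. Functoriality of $X \mapsto A_X$ follows from the same vanishing: applying $\Hom(-, A_Y)$ to the triangle and using $\Hom(B_X[i], A_Y) = 0$ shows $\Hom(A_X, A_Y) \xrightarrow{\sim} \Hom(X, A_Y)$, so the $A$-component of any lift of $f$ is uniquely determined.

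The adjoint properties then come from a single long exact sequence. Applying $\Hom(-, A')$ for $A' \in \Aa$ to the triangle and using $\Hom(B_X[i], A') = 0$ for all $i$ gives a natural isomorphism $\Hom_\Dd(X, i_\Aa A') \cong \Hom_\Aa(A_X, A')$, identifying $X \mapsto A_X$ as left adjoint to $i_\Aa$; hence $\Aa$ is left admissible. Dually, applying $\Hom(B', -)$ for $B' \in \Bb$ identifies $X \mapsto B_X$ as right adjoint to $i_\Bb$, so $\Bb$ is right admissible.

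For the converse, suppose $\Aa \subset \Dd$ is left admissible with left adjoint $L$ to $i_\Aa$. The semiorthogonality $\Hom_\Dd({}^\perp\Aa, \Aa) = 0$ in $\langle \Aa, {}^\perp\Aa \rangle$ is immediate from the definition of ${}^\perp\Aa$. For the generation condition, given $Y \in \Dd$, complete the unit $\eta_Y : Y \to i_\Aa L(Y)$ to a distinguished triangle $C \to Y \to i_\Aa L(Y) \to C[1]$. Since $i_\Aa$ is fully faithful, the counit $L i_\Aa \to \mathrm{id}_\Aa$ is a natural isomorphism; applying $L$ to this triangle shows the induced map $L(Y) \to L i_\Aa L(Y)$ is an isomorphism, hence $L(C) = 0$. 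Then for every $A \in \Aa$, the adjunction yields $\Hom_\Dd(C, i_\Aa A) \cong \Hom_\Aa(L(C), A) = 0$, so $C \in {}^\perp\Aa$, and the triangle exhibits $Y$ as built from $i_\Aa L(Y) \in \Aa$ and $C \in {}^\perp\Aa$, proving $\Dd = \langle \Aa, {}^\perp\Aa \rangle$. The argument for a right admissible $\Bb$ is symmetric, using the counit $i_\Bb R \to \mathrm{id}_\Dd$ of the right adjoint $R$ in place of the unit. The main technical obstacle is the careful octahedral bookkeeping in the closure under cones; every other step reduces to chasing the semiorthogonality hypothesis through a $\Hom$ long exact sequence.
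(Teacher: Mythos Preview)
The paper does not supply its own proof of this lemma; it is stated with a reference to Bondal \cite{Bo} and then used as a black box. Your proposal therefore cannot be compared against any argument in the paper, but it is the standard proof and is essentially correct.

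One small correction: in the forward direction, the closure of $\Cc$ under cones is not literally a \emph{single} application of the octahedral axiom. After lifting $B_X \to Y$ through $B_Y$, you need one octahedron for the composite $B_X \to X \to Y$ (producing a triangle $A_X \to W \to \mathrm{cone}(f)$ with $W = \mathrm{cone}(B_X \to Y)$), a second for $B_X \to B_Y \to Y$ (producing $B' \to W \to A_Y$ with $B' = \mathrm{cone}(g) \in \Bb$), and a third for $B' \to W \to \mathrm{cone}(f)$ to obtain the desired triangle $B' \to \mathrm{cone}(f) \to A''$ with $A'' \in \Aa$. Equivalently, you may simply invoke the $3 \times 3$ lemma for triangulated categories. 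Either way the conclusion stands. The remaining steps---the functoriality of $X \mapsto A_X$ via the vanishing $\Hom(B_X[i], A_Y) = 0$, the identification of the adjoints by the long exact $\Hom$ sequence, and the converse via the unit (respectively counit) triangle and the triangle identities---are all correct as written.
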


This lemma has the following generalization.

\begin{lemma} [\cite{Bo}] \label{lemma 2} 
Let $\Aa_{1},\Aa_{2},...,\Aa_{n}$ be a semiorthogonal sequence of admissible (full triangulated) subcategories in $\Dd$; i.e., $\Hom(\Aa_{i},\Aa_{j})=0$ for all $1 \leq j < i \leq n$. Then for each $0 \leq i \leq n$ there is a $\SOD$ 
\begin{equation*}
\Dd=\langle \Aa_{1},...,\Aa_{i}, ^{\perp}\langle \Aa_{1},...,\Aa_{i} \rangle \cap \langle \Aa_{i+1},...,\Aa_{n} \rangle^{\perp}, \Aa_{i+1},...,\Aa_{n} \rangle.
\end{equation*}
\end{lemma}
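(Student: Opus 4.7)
The plan is to induct on $n$ and iteratively apply Lemma \ref{lemma 1}. The key auxiliary fact is that a semiorthogonal sequence of admissible subcategories $\Aa_{1},\dots,\Aa_{m}$ generates a subcategory $\langle \Aa_{1},\dots,\Aa_{m}\rangle$ which is itself admissible in $\Dd$ and carries the genuine SOD $\langle \Aa_{1},\dots,\Aa_{m}\rangle$. This is proved by induction on $m$: for the step, set $\Bb=\langle \Aa_{1},\dots,\Aa_{m-1}\rangle$, which by the inductive hypothesis is admissible with SOD $\langle \Aa_{1},\dots,\Aa_{m-1}\rangle$; since $\Bb$ and $\Aa_{m}$ are semiorthogonal and both admissible, one can assemble a left and right adjoint to the inclusion $\langle \Bb,\Aa_{m}\rangle \hookrightarrow \Dd$ from the adjoints of $\Bb$ and $\Aa_{m}$ via the standard projection/mutation construction (for instance, the right adjoint of $Y\in\Dd$ is obtained by first applying $i_{\Aa_{m}}^{!}$ and then $i_{\Bb}^{!}$ to the cone of the counit).

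Granted this, fix $0\le i \le n$ and apply Lemma \ref{lemma 1} to the (in particular) left admissible subcategory $\langle \Aa_{1},\dots,\Aa_{i}\rangle \subset \Dd$ to obtain
\begin{equation*}
\Dd = \langle \langle \Aa_{1},\dots,\Aa_{i}\rangle, \ {}^{\perp}\langle \Aa_{1},\dots,\Aa_{i}\rangle \rangle.
\end{equation*}
By the semiorthogonality hypothesis, $\langle \Aa_{i+1},\dots,\Aa_{n}\rangle$ sits inside ${}^{\perp}\langle \Aa_{1},\dots,\Aa_{i}\rangle$, and by the auxiliary claim it is admissible, hence right admissible as a subcategory of the ambient triangulated category ${}^{\perp}\langle \Aa_{1},\dots,\Aa_{i}\rangle$. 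A second application of Lemma \ref{lemma 1}, this time inside ${}^{\perp}\langle \Aa_{1},\dots,\Aa_{i}\rangle$, yields
\begin{equation*}
{}^{\perp}\langle \Aa_{1},\dots,\Aa_{i}\rangle = \bigl\langle \Cc, \ \langle \Aa_{i+1},\dots,\Aa_{n}\rangle \bigr\rangle,
\end{equation*}
where $\Cc$ is the right orthogonal of $\langle \Aa_{i+1},\dots,\Aa_{n}\rangle$ computed inside ${}^{\perp}\langle \Aa_{1},\dots,\Aa_{i}\rangle$, which is exactly ${}^{\perp}\langle \Aa_{1},\dots,\Aa_{i}\rangle \cap \langle \Aa_{i+1},\dots,\Aa_{n}\rangle^{\perp}$. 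Concatenating the two decompositions and expanding $\langle \Aa_{1},\dots,\Aa_{i}\rangle$ and $\langle \Aa_{i+1},\dots,\Aa_{n}\rangle$ as SODs (via the auxiliary claim) produces the stated semiorthogonal decomposition.

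The main obstacle is the auxiliary claim, specifically the construction of both adjoints for the inclusion $\langle \Aa_{1},\dots,\Aa_{m}\rangle \hookrightarrow \Dd$. One has to verify that the sequential application of the individual adjoints $i_{\Aa_{j}}^{!}$ (or $i_{\Aa_{j}}^{*}$) produces a well-defined functor into $\langle \Aa_{1},\dots,\Aa_{m}\rangle$, and that semiorthogonality ensures the successive cones indeed belong to the appropriate subcategories so that no extraneous terms appear. Once this routine but technical bookkeeping is done, the rest of the argument is a direct two-step application of Lemma \ref{lemma 1}.
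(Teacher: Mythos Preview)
The paper does not give its own proof of this lemma; it is simply quoted from \cite{Bo}. Your sketch is the standard argument and is correct: prove by induction that $\langle \Aa_{1},\dots,\Aa_{m}\rangle$ is admissible, then apply Lemma~\ref{lemma 1} twice. One small point worth making explicit is why admissibility of $\langle \Aa_{i+1},\dots,\Aa_{n}\rangle$ in $\Dd$ implies right admissibility inside ${}^{\perp}\langle \Aa_{1},\dots,\Aa_{i}\rangle$: this holds because a right adjoint to an inclusion into $\Dd$ restricts to a right adjoint for the inclusion into any intermediate full triangulated subcategory, so no separate argument is needed there.
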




The simplest example of an admissible subcategory is the one generated by an exceptional object.
\begin{definition}  \label{definition 7}
	An object $E \in \text{Ob}(\Dd)$ is called  \textit{exceptional} if 
	\[
	\Hom_{\Dd}(E,E[l])=\begin{cases}
		\CC & \ \text{if} \ l=0 \\
		0 & \ \text{if} \ l \neq 0.
	\end{cases}
	\]
\end{definition}

Then we define the notion of exceptional collections.

\begin{definition} \label{definition 8}
	An ordered collection $\{E_{1},...,E_{n}\}$, where $E_{i} \in \text{Ob}(\Dd)$ for all $1 \leq i \leq n$, is called an \textit{exceptional collection} if each $E_{i}$ is exceptional and $\Hom_{\Dd}(E_{i},E_{j}[l])=0$ for all $i>j$ and $\l \in \ZZ$.	
\end{definition}

An exceptional collection $\{E_{1},...,E_{n}\}$ in $\Dd$ naturally give rise to a $\SOD$ of $\Dd$
\begin{equation*}
	\Dd=\langle \Aa, E_{1},...,E_{n} \rangle
\end{equation*} where $\Aa=\langle E_{1},...,E_{n} \rangle^{\perp}$ and $E_{i}$ denote the full triangulated subcategory generated by the object $E_{i}$. An exceptional collection is called  \textit{full} if the subcategory $\Aa$ is zero. 

Finally, we define the notion of a (right) dual exceptional collection.
\begin{definition}  \label{definition 9}
	Let $\{E_{1},...,E_{n}\}$ be an exceptional collection on the triangulated category $\Dd$. An exceptional collection $\{F_{1},...,F_{n}\}$ of objects in $\Dd$ is called \textit{right dual} to $\{E_{1},...,E_{n}\}$ if 
	\begin{equation*}
		\Hom_{\Dd}(F_{i},E_{i}[l])=\begin{cases}
			\CC & \ \text{if} \ l=0 \\
			0  & \ \text{if} \ l \neq 0
		\end{cases}
	\end{equation*} and $\Hom_{\Dd}(F_{i},E_{j}[l])=0$ for $i \neq j$ and $l \in \ZZ$.
\end{definition} 

We will see more examples of semiorthogonal decompositions given by exceptional collections in Subsection \ref{2.3}.

\subsection{Fourier-Mukai transformations}

Next, we recall the tool of Fourier-Mukai transformations which will be used in the proof of Theorem \ref{Theorem 7} for constructing categorical action. 

\begin{definition} 
	Let $X$ and $Y$ be two smooth projective varieties over $\CC$. A  \textit{Fourier-Mukai kernel} is an object $\Pp$ of the
	bounded derived category of coherent sheaves on $X \times Y$. Given $\Pp \in \Dd^b(X \times Y)$, we may define
	the associated  \textit{Fourier-Mukai transform}, which is the functor
	\[
	\Phi_{\Pp}:\Dd^b(X) \rightarrow \Dd^b(Y) 
	\]
	\[
	\ \ \ \ \ \ \ \ \Ff \mapsto \pi_{2*}(\pi_{1}^*(\Ff) \otimes \Pp)
	\] where $\pi_{1}$, $\pi_{2}$ are natural projections from $X \times Y$ to $X$, $Y$ respectively.
\end{definition}

We call $\Phi_{\Pp}$ the Fourier-Mukai transform with (Fourier-Mukai) kernel $\Pp$. For convenience, we will just write FM for Fourier-Mukai. The first property of FM transforms is that they have left and right adjoints that are themselves FM transforms.

\begin{proposition} [\cite{Huy} Proposition 5.9] \label{Proposition 1}
	For $\Phi_{\Pp}:\Dd^b(X) \rightarrow \Dd^b(Y)$ is the FM transform with kernel $\Pp$, define 
	\[
	\Pp_{L}=\Pp^{\vee} \otimes \pi^*_{2}\omega_{Y}[\tdim  Y], \ \Pp_{R}=\Pp^{\vee} \otimes \pi^*_{1}\omega_{X}[\tdim X].
	\] 
	
	Then
	\[
	\Phi_{\Pp_{L}}:\Dd^b(Y) \rightarrow \Dd^b(X), \  \Phi_{\Pp_{R}}:\Dd^b(Y) \rightarrow \Dd^b(X)
	\] are the left and right adjoints of $\Phi_{\Pp}$, respectively.
\end{proposition}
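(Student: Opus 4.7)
The plan is to verify the adjunctions directly by manipulating $\Hom$-spaces, using the following four ingredients: (i) the adjunction $(\pi_1^*, \pi_{1*})$ and $(\pi_2^*, \pi_{2*})$ for the projections $\pi_1, \pi_2$ from $X \times Y$; (ii) the tensor-hom adjunction, which for a locally free (or reflexive) object $\Pp$ gives $\Hom(A \otimes \Pp, B) \cong \Hom(A, B \otimes \Pp^\vee)$; (iii) the projection formula $\pi_{i*}(C \otimes \pi_i^* D) \cong \pi_{i*}(C) \otimes D$; and (iv) relative Grothendieck--Serre duality, which for the smooth proper map $\pi_2: X \times Y \to Y$ takes the form $\pi_2^!(G) \cong \pi_2^*(G) \otimes \pi_1^* \omega_X [\tdim X]$, and symmetrically for $\pi_1$.

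For the right adjoint, I would start with $\Hom_{\Dd^b(Y)}(\Phi_\Pp(F), G)$ for $F \in \Dd^b(X)$, $G \in \Dd^b(Y)$, and transform it step by step:
\begin{align*}
\Hom_{Y}(\pi_{2*}(\pi_1^* F \otimes \Pp), G)
&\cong \Hom_{X \times Y}(\pi_1^* F \otimes \Pp, \pi_2^! G) \\
&\cong \Hom_{X \times Y}(\pi_1^* F \otimes \Pp, \pi_2^* G \otimes \pi_1^* \omega_X [\tdim X]) \\
&\cong \Hom_{X \times Y}(\pi_1^* F, \Pp^\vee \otimes \pi_1^* \omega_X \otimes \pi_2^* G [\tdim X]) \\
&\cong \Hom_{X}(F, \pi_{1*}((\Pp^\vee \otimes \pi_1^* \omega_X [\tdim X]) \otimes \pi_2^* G)) \\
&= \Hom_{X}(F, \Phi_{\Pp_R}(G)),
\end{align*}
where the second line uses (iv), the third uses (ii), and the fourth uses $(\pi_1^*, \pi_{1*})$ together with the projection formula to absorb $\pi_1^* \omega_X$. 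This identifies $\Phi_{\Pp_R}$ as the right adjoint.

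For the left adjoint, I would run the mirror computation starting from $\Hom_{Y}(G, \Phi_\Pp(F))$ is the wrong direction, so instead I compute $\Hom_{X}(\Phi_{\Pp_L}(G), F)$ and show it equals $\Hom_{Y}(G, \Phi_\Pp(F))$ by the same chain of manipulations, this time applying Serre duality to the other projection $\pi_1$ (relative canonical $\pi_2^* \omega_Y[\tdim Y]$) and using that $(\Pp^\vee)^\vee \cong \Pp$ when $\Pp$ is perfect, which is automatic on the smooth variety $X \times Y$. All the adjoints, pushforwards and pullbacks are the derived ones, and the smoothness and properness of $X, Y$ ensure everything is well-defined on $\Dd^b$ and that $\omega_X, \omega_Y$ are genuine line bundles.

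The only real subtlety is relative Serre duality for smooth proper morphisms between smooth varieties, which I would either cite or reduce to absolute Serre duality on $X \times Y$ combined with the projection formula; the rest is formal manipulation of adjunctions. A small care point is ensuring the direction of the shifts and duals matches, which is exactly the asymmetry between $\Pp_L$ (twisted by $\pi_2^* \omega_Y[\tdim Y]$, reflecting integration along the $X$-fiber of $\pi_2$ for the \emph{left} adjoint) and $\Pp_R$ (twisted by $\pi_1^* \omega_X[\tdim X]$).
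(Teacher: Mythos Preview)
The paper does not give its own proof of this proposition; it is simply quoted from Huybrechts' book (Proposition 5.9 there) as a standard background fact. Your argument is exactly the standard proof one finds in that reference: Grothendieck duality for the smooth proper projections identifies $\pi_i^!$ with $\pi_i^* \otimes \omega_{\text{rel}}[\dim]$, and then the adjunctions $(\pi_{2*},\pi_2^!)$, tensor--$\mathcal{H}om$, and $(\pi_1^*,\pi_{1*})$ chain together to yield $\Phi_{\Pp_R}$; the left adjoint is obtained symmetrically. One minor clarification: in the passage from your third to fourth displayed line you only need the adjunction $(\pi_1^*,\pi_{1*})$, not the projection formula, since $\pi_1^*\omega_X$ is already part of the kernel $\Pp_R$ and need not be pulled outside the pushforward. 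Also, the tensor--$\mathcal{H}om$ step requires $\Pp$ perfect rather than locally free or reflexive, but as you note this is automatic on the smooth product $X\times Y$.
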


The second property is that the composition of FM transforms is also an FM transform.

\begin{proposition} [\cite{Huy} Proposition 5.10] \label{Proposition 2}
	Let $X, Y, Z$ be smooth projective varieties over $\CC$. Consider objects $\Pp \in \Dd^b(X \times Y)$ and $\Qq \in \Dd^b(Y \times Z)$. They define FM transforms $\Phi_{\Pp}:\Dd^b(X) \rightarrow \Dd^b(Y)$, $\Phi_{\Qq}:\Dd^b(Y) \rightarrow \Dd^b(Z)$.  We use $\ast$ to denote the operation for convolution, i.e.
	\[
	\Qq \ast \Pp:=\pi_{13*}(\pi_{12}^*(\Pp)\otimes \pi_{23}^{*}(\Qq))
	\]
	
	Then for $\R=\Qq \ast \Pp \in \Dd^b(X \times Z)$, we have $\Phi_{\Qq} \circ \Phi_{\Pp} \cong \Phi_{\R}$. 
\end{proposition}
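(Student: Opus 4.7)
The plan is to compute $\Phi_\Qq \circ \Phi_\Pp(\Ff)$ directly for $\Ff \in \Dd^b(X)$ on the triple product $W = X \times Y \times Z$, and to identify it with $\Phi_{\R}(\Ff)$ through a chain of flat base change and projection formula isomorphisms, each natural in $\Ff$. Let $\pi_i : W \to (\text{$i$-th factor})$ and $\pi_{ij} : W \to (\text{product of the $i$-th and $j$-th factors})$ denote the projections from $W$, and write $p_X, p_Y$, $q_Y, q_Z$, $r_X, r_Z$ for the two-factor projections from $X\times Y$, $Y\times Z$, $X\times Z$ respectively.

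First I would unwind the definitions to obtain
\[
\Phi_\Qq(\Phi_\Pp(\Ff)) \;=\; q_{Z*}\bigl(q_Y^{*} p_{Y*}(p_X^{*}\Ff \otimes \Pp) \otimes \Qq\bigr).
\]
The commutative square with corners $W$, $Y\times Z$, $X\times Y$, $Y$ and maps $\pi_{23}, \pi_{12}, q_Y, p_Y$ is Cartesian, with all arrows flat projections between smooth varieties. Flat base change therefore supplies a natural isomorphism $q_Y^{*} p_{Y*} \cong \pi_{23*} \pi_{12}^{*}$. Combined with the identity $\pi_{12}^{*} p_X^{*} = \pi_1^{*}$ and the compatibility of $\otimes$ with pullback, the inner expression becomes $\pi_{23*}(\pi_1^{*}\Ff \otimes \pi_{12}^{*}\Pp)$.

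Next, the projection formula for $\pi_{23}$ against $\Qq \in \Dd^b(Y\times Z)$ gives
\[
\pi_{23*}(\pi_1^{*}\Ff \otimes \pi_{12}^{*}\Pp) \otimes \Qq \;\cong\; \pi_{23*}\bigl(\pi_1^{*}\Ff \otimes \pi_{12}^{*}\Pp \otimes \pi_{23}^{*}\Qq\bigr).
\]
Using $q_Z \circ \pi_{23} = r_Z \circ \pi_{13}$ and $\pi_1 = r_X \circ \pi_{13}$, and applying the projection formula a second time to $\pi_{13}$ against $r_X^{*}\Ff$, the whole expression collapses to
\[
r_{Z*}\bigl(r_X^{*}\Ff \otimes \pi_{13*}(\pi_{12}^{*}\Pp \otimes \pi_{23}^{*}\Qq)\bigr) \;=\; \Phi_{\R}(\Ff),
\]
since $\pi_{13*}(\pi_{12}^{*}\Pp \otimes \pi_{23}^{*}\Qq) = \R$ by definition.

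The main thing to verify is that each step is valid at the derived level and is natural in $\Ff$, so that stringing the isomorphisms together produces an isomorphism of \emph{functors} rather than a mere objectwise isomorphism. Naturality is automatic because flat base change, the projection formula, and the identifications of $\pi_{ij}^{*}$ with compositions of two-factor pullbacks are natural transformations of derived functors; and the operations are well-defined on $\Dd^b$ because $X, Y, Z$ are smooth projective, so proper pushforwards preserve boundedness and all sheaves have finite Tor-amplitude. No genuine obstacle arises beyond this routine bookkeeping.
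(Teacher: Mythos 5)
Your argument is correct and is precisely the standard proof of this proposition (flat base change on the Cartesian square over $Y$, followed by two applications of the projection formula), which is the argument in \cite{Huy}, Proposition 5.10; the paper itself simply cites that source without reproducing the proof. Nothing further is needed.
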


\begin{remark} \label{remark 1}
	Moreover by \cite{Huy} remark 5.11, we have $(\Qq \ast \Pp)_{L} \cong (\Pp)_{L} \ast (\Qq)_{L}$ and $(\Qq \ast \Pp)_{R} \cong (\Pp)_{R} \ast (\Qq)_{R}$.
\end{remark}

\subsection{Beilinson-Kapranov exceptional collections}  \label{2.3}

In this section, we recall the exceptional collections constructed by Beilinson for projective spaces and by Kapranov for partial flag varieties.

The first one is the projective space $\PP^N=\PP_{sub}(\CC^{N+1})=\Gr(1,\CC^{N+1})$ which is due to Beilinson.

\begin{theorem}[\cite{Be}] 
	There is a full exceptional collection
	\begin{equation*}
		\Dd^b(\PP^N)=\langle \Oo_{\PP^N}(-N), \Oo_{\PP^N}(-N+1),..., \Oo_{\PP^N} \rangle.
	\end{equation*}
\end{theorem}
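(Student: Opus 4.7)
The plan is to split the theorem into two parts: verifying that the collection is exceptional and semiorthogonal, which reduces to the cohomology of line bundles on $\PP^N$, and establishing fullness via the Beilinson resolution of the diagonal.

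First I would compute
\begin{equation*}
\Hom_{\Dd^b(\PP^N)}(\Oo(a),\Oo(b)[l]) \cong H^l(\PP^N,\Oo(b-a))
\end{equation*}
for any integers $a,b,l$. Setting $a=b$ gives $H^l(\PP^N,\Oo)$, which is $\CC$ for $l=0$ and zero otherwise, so each $\Oo(-i)$ with $0\leq i\leq N$ is exceptional. For the semiorthogonality of the ordered collection $\Oo(-N),\Oo(-N+1),\dots,\Oo$, the relevant Ext groups are $H^l(\PP^N,\Oo(m))$ with $-N\leq m\leq -1$. All such groups vanish: the cohomology of $\Oo(m)$ on $\PP^N$ is supported in degree $0$ for $m\geq 0$ and, by Serre duality, in degree $N$ for $m\leq -N-1$, with nothing in between.

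For fullness the key input is the Beilinson resolution of the diagonal. Writing $\PP^N=\PP_{\sub}(V)$ with $V=\CC^{N+1}$, the composition $p_2^*\Oo(-1)\hookrightarrow V\twoheadrightarrow p_1^*T(-1)$ on $\PP^N\times\PP^N$ vanishes precisely on $\Delta$, hence gives a regular section of the rank $N$ bundle $T(-1)\boxtimes\Oo(1)$, whose Koszul complex reads
\begin{equation*}
0 \to \Omega^N(N)\boxtimes\Oo(-N) \to \Omega^{N-1}(N-1)\boxtimes\Oo(-N+1) \to \cdots \to \Oo\boxtimes\Oo \to \Oo_{\Delta} \to 0.
\end{equation*}
Given any $\Ff\in\Dd^b(\PP^N)$, the identity $\Ff\cong p_{2*}(p_1^*\Ff\otimes\Oo_{\Delta})$ together with this resolution expresses $\Ff$ as an iterated cone of terms
\begin{equation*}
p_{2*}\bigl(p_1^*\Ff\otimes(\Omega^i(i)\boxtimes\Oo(-i))\bigr) \;\cong\; H^{*}(\PP^N,\Ff\otimes\Omega^i(i))\otimes_{\CC}\Oo_{\PP^N}(-i),
\end{equation*}
each of which lies in the triangulated subcategory generated by $\{\Oo(-N),\dots,\Oo\}$. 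Hence the collection generates $\Dd^b(\PP^N)$, and combined with the previous step this yields the claimed full exceptional collection.

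The main obstacle is the construction and exactness of the Koszul resolution of the diagonal, which requires identifying the correct rank $N$ bundle and checking that its canonical section cuts out $\Delta$ transversally; once this non-formal input is in hand, exceptionality and semiorthogonality are immediate from Bott-type vanishing, and fullness reduces to the d\'evissage above via Fourier--Mukai convolution.
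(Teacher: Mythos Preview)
Your proof is correct and follows the standard Beilinson argument. Note, however, that the paper does not give its own proof of this statement: it is quoted from \cite{Be} as background in the preliminaries, so there is no in-paper proof to compare against. Your outline is precisely the classical one (cohomology of line bundles for exceptionality and semiorthogonality, Koszul resolution of the diagonal for generation), which is what Beilinson's original paper does.
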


Let $\blam=(\lambda_{1},...,\lambda_{n})$ be a non-increasing sequence of positive integers. We can represent $\blam$ as a Young diagram with $n$ rows, aligned on the left, such that the $i$th row
has exactly $\lambda_{i}$ cells. The size of $\blam$, denoted by $|\blam|$, is the number $|\blam|=\sum_{i=1}^{n} \lambda_{i}$. The transpose diagram $\blam^*$ is obtained by exchanging rows and columns of $\blam$. 

Next, for such  Young diagrams $\blam$ we define the notion of Schur functors.  For more details about Schur functors, we refer the readers to Chapter 4 and chapter 6 in \cite{FH}.

\begin{definition}
	Let $n \geq 1$ be a positive integer and $\blam=(\lambda_{1},...,\lambda_{n})$ be a sequence of non-increasing positive integers. The  \textit{Schur functor} $\s_{\blam}$ associated to $\blam$ is defined as a functor 
	\[
	\s_{\blam}:\Vect_{\CC} \rightarrow \Vect_{\CC}
	\] 
	such that for any vector space $V$, $\s_{\blam}V$ coincides with the image of the Young symmetrizer $c_{\blam}$ in the space of tensors of $V$ of rank $n$: i.e., $\s_{\blam}V=\text{Im} (c_{\blam}|_{V^{\otimes n}})$.
\end{definition}

Let $\Gr(k,\CC^N)=\{0 \subset V \subset \CC^N \ | \ \tdim V=k\}$ be the Grassmannian of $k$-dimensional subspaces in $\CC^N$. Denote $\V$ to be the tautological rank $k$ bundle on $\Gr(k,\CC^N)$ and $\CC^N/\V$ to be the tautological rank $N-k$ quotient bundle.  For non-negative integers $a, \ b \geq 0$, we denote by $P(a,b)$ the set of Young diagrams $\blam$ such that $\lambda_{1} \leq a$ and $\lambda_{b+1} =0$. Then we have the following theorem.

\begin{theorem} [\cite{Kap85}] \label{Theorem 1}
There is a full exceptional collection
\begin{equation*}
	\Dd^b(\mathrm{Gr}(k,\CC^N))=\langle \ \s_{\blam}\V \ \rangle_{\blam \in P(N-k,k)}.
\end{equation*}  

Its dual exceptional collection, which is also full, is given by 
\begin{equation*}
	\Dd^b(\Gr(k,\CC^N))=\langle \ \s_{\bmu}\CC^N/\V[-|\bmu|] \ \rangle_{\bmu \in P(k,N-k)}.
\end{equation*}
\end{theorem}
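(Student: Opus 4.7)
The plan is to verify three properties in order: (a) each $\s_{\blam}\V$ with $\blam\in P(N-k,k)$ is exceptional, (b) the collection $\{\s_{\blam}\V\}_{\blam}$ is semiorthogonal for a suitable total order on $P(N-k,k)$, and (c) it generates $\Dd^b(\Gr(k,\CC^N))$; the right dual collection is then read off from the diagonal resolution used in (c). The tools are Borel-Weil-Bott on the homogeneous bundle $\Gr(k,\CC^N)=\GLL_N/P$ together with the Littlewood-Richardson rule, and the Cauchy-Koszul resolution of the diagonal.

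To prove (a) and (b), I would first reduce to a cohomology computation via
\[
\Ext^{\ell}(\s_{\blam}\V,\s_{\bmu}\V)\cong H^{\ell}\bigl(\Gr(k,\CC^N),\ \s_{\blam}\V^{\vee}\otimes \s_{\bmu}\V\bigr).
\]
The $\GLL_{k}$-representation $\s_{\blam}\V^{\vee}\otimes \s_{\bmu}\V$ splits by the Littlewood-Richardson rule into a sum of irreducibles, each a Schur functor of $\V$ twisted by a power of $\tdet\V$. Borel-Weil-Bott then kills the cohomology of any summand whose highest weight is singular under the Weyl dot-action, and places the cohomology of a regular summand in a unique degree. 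Ordering $P(N-k,k)$ by $|\blam|$, refined lexicographically, I expect every summand arising for $\blam\neq \bmu$ with $\blam$ later than $\bmu$ to be singular (giving the semiorthogonal vanishing), while for $\blam=\bmu$ only the trivial summand survives, producing $\Ext^{0}(\s_{\blam}\V,\s_{\blam}\V)=\CC$ and higher $\Ext$'s zero. This is Kapranov's original computation in \cite{Kap85}.

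For (c) I would resolve the diagonal. On $\Gr(k,\CC^N)\times \Gr(k,\CC^N)$, let $\V_1,\V_2$ denote the pullbacks of the tautological subbundle from the two factors. The composition $\V_2\hookrightarrow \CC^N\twoheadrightarrow \CC^N/\V_1$ is a section of $\V_{2}^{\vee}\boxtimes (\CC^N/\V_1)$ (a bundle of rank $k(N-k)$), cutting out $\Delta$ with the expected codimension. Its Koszul complex resolves $\Oo_{\Delta}$, and the Cauchy formula $\wedge^{p}(A\otimes B)=\bigoplus_{|\blam|=p}\s_{\blam}A\otimes \s_{\blam^{*}}B$ rewrites the $p$-th term as
\[
\bigoplus_{\substack{\blam \in P(N-k,k)\\ |\blam|=p}}\ \s_{\blam^{*}}(\CC^N/\V_1)^{\vee}\,\boxtimes\,\s_{\blam}\V_2.
\]
Convolving this resolution against any $F\in \Dd^b(\Gr(k,\CC^N))$ via Propositions \ref{Proposition 1} and \ref{Proposition 2} presents $F$ as an iterated cone of objects $R\Gamma\bigl(F\otimes \s_{\blam^{*}}(\CC^N/\V)^{\vee}\bigr)\otimes \s_{\blam}\V$, so $\{\s_{\blam}\V\}$ generates. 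Reading the other factor of the same resolution, combined with Grothendieck-Serre duality (using $\omega_{\Gr}=\tdet(\V)^{N}\otimes\tdet(\CC^N/\V)^{-k}$ up to a twist) identifies $\{\s_{\bmu}(\CC^N/\V)[-|\bmu|]\}_{\bmu\in P(k,N-k)}$ (with $\bmu=\blam^{*}$) as the right dual collection in the sense of Definition \ref{definition 9}.

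The principal difficulty is the Borel-Weil-Bott bookkeeping in (b): for every pair $\blam\neq \bmu$ with $\blam$ later than $\bmu$ in the chosen order, one must check that every Littlewood-Richardson summand of $\s_{\blam}\V^{\vee}\otimes \s_{\bmu}\V$ lands in a singular Weyl chamber, hence contributes zero cohomology in all degrees. A secondary, purely clerical difficulty is tracking the $\tdet\V$-twists and cohomological shifts in the Cauchy-Koszul resolution so that the right dual collection comes out exactly as $\s_{\bmu}(\CC^N/\V)[-|\bmu|]$ rather than some nearby twist.
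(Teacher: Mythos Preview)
Your proposal is essentially Kapranov's original argument and is correct in outline; however, the paper does not give its own proof of this statement at all. Theorem~\ref{Theorem 1} is stated in Section~\ref{2.3} purely as background, with a citation to \cite{Kap85}, and is used as input rather than proved. So there is no ``paper's proof'' to compare against: your Borel--Weil--Bott plus Littlewood--Richardson verification of exceptionality/semiorthogonality, together with the Cauchy--Koszul resolution of the diagonal for generation and for reading off the right dual collection, is exactly the standard argument from the cited reference.
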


Here we have to mention the order in the set $P(N-k,k)$ used for the semiorthogonal property in the definition of exceptional collections. We denote $<_{\l}$ for the lexicographical order. Let $\blam=(\lambda_{1},...,\lambda_{k}),\ \blam'=(\lambda'_{1},...,\lambda'_{k}) \in P(N-k,k)$ be two different elements. We say that $\blam <_{l} \blam'$ if counting from the beginning, the order depends on the first $1 \leq j \leq k$ such that $\lambda_{j}<\lambda'_{j}$. More precisely, $\lambda_{i} \leq \lambda'_{i}$ for all $1 \leq i \leq k$ and there exist $j$ such that $\lambda_{j}<\lambda'_{j}$. For example, considering the set $P(2,2)$, we have the lexicographical order 
\begin{equation*}
	(0,0) <_{l} (1,0) <_{l} (1,1) <_{l} (2,0) <_{l} (2,1) <_{l} (2,2).
\end{equation*}

Since $\V$ is the tautological bundle on $\Gr(k,\CC^N)$, the semiorthogonal property for $\{\s_{\blam}\V\}$ should have the following statement; i.e., 
\begin{equation*}
\Hom(\s_{\blam}\V, \s_{\blam'}\V)=0
\end{equation*} if $\blam <_{l} \blam'$. For the right dual exceptional collection $\{\s_{\bmu}\CC^N/\V[-|\bmu|]\}$ the order is opposite to the order on $\{\s_{\blam}\V\}$. More precisely, we have 
\begin{equation*}
\Hom(\s_{\bmu}\CC^N/\V, \s_{\bmu'}\CC^N/\V)=0
\end{equation*} if $\bmu >_{\l} \bmu'$ where $\bmu, \bmu' \in P(k,N-k)$.

Moving to the partial flag varieties, for $\kk=(k_{1},...,k_{n}) \in \NN^n$ with $\sum_{i=1}^{n}k_{i}=N$, the partial flag variety is defined as follows 
\begin{equation*}
	\Fl_{\kk}(\CC^N):=\{V_{\bullet}=(0=V_{0} \subset V_1 \subset ... \subset V_{n}=\CC^N) \ | \ \tdim V_{i}/V_{i-1}=k_{i} \ \text{for} \ \text{all} \ i\}.
\end{equation*} 

Then we have the following generalization.
\begin{theorem}  [\cite{Kap88}]  \label{Theorem 2}
	There is a full exceptional collection
	\begin{equation*}
		\Dd^b(\Fl_{\kk}(\CC^N))=\langle \ \bigotimes_{i=1}^{n-1} \s_{\blam(i)}\V_{i} \ \rangle_{\blam(i) \in P(k_{i+1},\Bbbk_{i}), \ 1 \leq i \leq n-1}.
	\end{equation*}
\end{theorem}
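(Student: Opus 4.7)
The plan is to prove the collection by induction on $n$. The base case $n=2$ reduces to $\Fl_{(k_1,k_2)}(\CC^N)=\Gr(k_1,\CC^N)$ with $\Bbbk_1=k_1$, where the stated collection $\langle \s_{\blam(1)}\V_1\rangle_{\blam(1)\in P(k_2,k_1)}$ is precisely the Kapranov collection of Theorem \ref{Theorem 1}.

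For the inductive step, I would realize $\Fl_{\kk}(\CC^N)$ as a relative Grassmannian bundle over a shorter flag variety. Setting $\kk':=(k_1+k_2,k_3,\dots,k_n)$, the forgetful morphism $\pi:\Fl_{\kk}(\CC^N)\to \Fl_{\kk'}(\CC^N)$ that drops $V_1$ identifies $\Fl_\kk(\CC^N)$ with $\Gr(k_1,\V'_1)$, the Grassmannian bundle of rank $k_1$ subbundles of the tautological rank $k_1+k_2$ bundle $\V'_1$ on the target. Under this identification the relative tautological subbundle is the rank $k_1$ bundle $\V_1$ on $\Fl_\kk(\CC^N)$, and $\pi^*\V'_j=\V_{j+1}$ for all $1\leq j\leq n-2$.

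Applying the relative version of Kapranov's Grassmannian theorem (Orlov's Grassmannian bundle formula) to $\pi$ produces a semiorthogonal decomposition
\begin{equation*}
\Dd^b(\Fl_\kk(\CC^N))=\big\langle \pi^*\Dd^b(\Fl_{\kk'}(\CC^N))\otimes \s_{\blam(1)}\V_1\big\rangle_{\blam(1)\in P(k_2,k_1)}.
\end{equation*}
Feeding in the inductive hypothesis for $\Dd^b(\Fl_{\kk'}(\CC^N))$, which provides a full exceptional collection indexed by $\blam'(j)\in P(k'_{j+1},\Bbbk'_j)$ for $1\leq j\leq n-2$, and rewriting under the reindexing $\blam(i):=\blam'(i-1)$ together with $\pi^*\V'_j=\V_{j+1}$, yields the claimed collection $\bigotimes_{i=1}^{n-1}\s_{\blam(i)}\V_i$ indexed by $\blam(i)\in P(k_{i+1},\Bbbk_i)$.

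The main obstacle will be verifying that the combined ordering on the $(n-1)$-tuples $(\blam(1),\dots,\blam(n-1))$ gives a bona fide semiorthogonal collection. This reduces to computing $\Hom$s between products of Schur functors in the expected order; the $\V_1$-Schur factor is handled by Orlov's decomposition along $\pi$, while the $\pi$-pushforward of the remaining tensor, via the projection formula, lands in a $\Hom$ inside $\Dd^b(\Fl_{\kk'}(\CC^N))$ that vanishes by the inductive semiorthogonality. Fullness is automatic since both the bundle decomposition and the inductive collection are full.
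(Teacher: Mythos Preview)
Your proposal is correct and follows the standard inductive argument via Orlov's Grassmannian bundle formula, which is essentially how Kapranov's original proof is usually presented. Note, however, that the paper does not give its own proof of this statement: Theorem~\ref{Theorem 2} is quoted as a background result from \cite{Kap88} in the preliminaries section, with no argument supplied. So there is nothing in the paper to compare against beyond the citation.

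One minor comment on your write-up: the ``main obstacle'' you flag---verifying that the combined collection is semiorthogonal in the product lexicographic order---is not really an obstacle. Orlov's theorem already gives that each functor $\pi^*(-)\otimes\s_{\blam(1)}\V_1$ is fully faithful and that the resulting blocks are semiorthogonal, so the exceptional-collection structure on the base pulls back verbatim within each block, and the between-block vanishing is automatic. Your projection-formula computation is the right idea but you need not frame it as a difficulty.
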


\begin{remark}
The order in the partial flag variety case is the product lexicographical order on \begin{equation*}
P(k_{2},k_{1}) \times ... \times P(k_{i+1},k_{1}+..+k_{i}) \times ... \times P(k_{n},k_{1}+..+k_{n-1}),
\end{equation*} which is denoted by $<_{pl}$, where "pl" stands for product  lexicographical. More precisely, we have $(\blam(1),...,\blam(n-1)) <_{pl} (\blam(1)',...,\blam(n-1)')$ if there exists $1 \leq i \leq n-1$ such that $\blam(j)=\blam(j)'$ for all $0 \leq j \leq i-1$ and $\blam(i) <_{l} \blam(i)'$, where $\blam(i), \blam(i)' \in P(k_{i+1},\Bbbk_{i})$. 
\end{remark}

\begin{remark}
	Note that for the partial flag varieties, we do not have the right dual exceptional collection in Theorem \ref{Theorem 2}. It was pointed out in \cite{Kap88} that there are objects that satisfy the Definition \ref{definition 9} for dual exceptional collection; however, they do have higher $\Ext$'s between themself.
\end{remark}

\subsection{Projective bundle formula}
The final tool we will need in later sections is the projective bundle formula. Let $\V$ be a vector bundle of rank $n$ on a (smooth) algebraic variety $X$, where $n \geq 2$. Then we can form the projectivization $\PP_{\quo}(\V)$ which parametrizes 1-dimensional quotient bundle of $\V$. Note that from our notation we have $\PP_{\quo}(\V)= \Gr(\V,1)=\Gr(\V^{\vee},1)=\PP_{\sub}(\V^{\vee})$. We denote $\pi:\PP_{\quo}(\V) \rightarrow X$ to be the projection which is a $\PP^{n-1}$-fibration.

Let  $\Oo_{\PP_{\quo}(\V)}(-1)$ be the tautological bundle and $\Oo_{\PP_{\quo}(\V)}(1)$ be the dual bundle, and we define  $\Oo_{\PP_{\quo}(\V)}(i):=\Oo_{\PP_{\quo}(\V)}(1)^{\otimes i}$ for $i \in \ZZ$. Then we have the following result.

\begin{proposition} [Exercise 8.4 in Chapter 3 from \cite{Ha}] \label{proposition 3}
	\begin{equation*}
		\pi_{*} \Oo_{\PP_{\quo}(\V)}(i) \cong   \begin{cases} \Sym^i(\V) & \text{if} \ i \geq 0 \\ 0 & \text{if} \  1-n \leq i\leq -1 \\ \Sym^{-i-n}(\V^{\vee}) \otimes \tdet(\V)^{-1}[1-n]   & \text{if} \ i \leq -n \end{cases}. 
	\end{equation*}
\end{proposition}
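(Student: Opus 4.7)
The plan is to reduce everything to the classical computation of $H^{\bullet}(\PP^{n-1}, \Oo(i))$ on a single projective space. Since $\pi \colon \PP_{\quo}(\V) \to X$ is Zariski-locally a trivial $\PP^{n-1}$-bundle, the morphism $\pi$ is flat with fiberwise cohomology of locally constant rank, so cohomology-and-base-change implies that each $R^{j}\pi_{*} \Oo_{\PP_{\quo}(\V)}(i)$ is locally free, commutes with base change, and agrees fiberwise with $H^{j}(\PP^{n-1}, \Oo(i))$. Serre's classical computation then tells us that for each $i$ at most one $R^{j}\pi_{*}$ is nonzero: degree $0$ when $i \geq 0$, nothing at all when $1-n \leq i \leq -1$, and degree $n-1$ when $i \leq -n$. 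This already produces both the vanishing in the middle range and the homological shift $[1-n]$ in the last range. For the case $i \geq 0$, the identification $\pi_{*}\Oo(i) \cong \Sym^{i}(\V)$ follows from the tautological isomorphism $\pi_{*}\Oo(1) \cong \V$ (built into the quotient convention of $\PP_{\quo}(\V)$) composed with the natural multiplication map $\Sym^{i}(\pi_{*}\Oo(1)) \to \pi_{*}\Oo(i)$, which is an isomorphism because fiberwise it restricts to the classical $\Sym^{i}(\CC^{n}) \cong H^{0}(\PP^{n-1}, \Oo(i))$ and both sides are locally free.

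For the remaining range $i \leq -n$, I would identify the only surviving direct image $R^{n-1}\pi_{*}\Oo(i)$ by invoking relative Grothendieck--Serre duality. The relative Euler sequence
\begin{equation*}
0 \to \Omega^{1}_{\pi} \to \pi^{*}\V \otimes \Oo(-1) \to \Oo \to 0
\end{equation*}
together with a determinant computation yields the relative dualizing sheaf $\omega_{\pi} \cong \Oo_{\PP_{\quo}(\V)}(-n) \otimes \pi^{*}\tdet(\V)$. Combining duality with the projection formula then gives
\begin{equation*}
R\Hh om(\pi_{*}\Oo(i),\, \Oo_{X}) \;\cong\; \pi_{*}\Oo(-i-n) \otimes \tdet(\V)\,[n-1].
\end{equation*}
Since $-i-n \geq 0$, the $i \geq 0$ case already handled rewrites the right-hand side as $\Sym^{-i-n}(\V) \otimes \tdet(\V)[n-1]$. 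Taking duals and unraveling the shift yields $R^{n-1}\pi_{*}\Oo(i) \cong \Sym^{-i-n}(\V^{\vee}) \otimes \tdet(\V)^{-1}$, which when placed back in cohomological degree $n-1$ of the complex matches exactly the third case of the statement.

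The only delicate point I expect is getting the twist by $\pi^{*}\tdet(\V)$ in $\omega_{\pi}$ correct: its sign depends on whether one uses the subspace convention $\PP_{\sub}$ or the quotient convention $\PP_{\quo}$, and the wrong choice would propagate through the final formula. Working consistently with the quotient convention (so that $\pi_{*}\Oo(1) \cong \V$ rather than $\V^{\vee}$) and reading $\omega_{\pi}$ off the relative Euler sequence above fixes all signs; the remaining steps are standard manipulations with flat base change, the projection formula, and Grothendieck--Serre duality.
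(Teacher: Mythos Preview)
Your argument is correct and follows the standard route: cohomology-and-base-change to reduce to the classical computation on $\PP^{n-1}$, the tautological identification $\pi_{*}\Oo(1)\cong\V$ in the quotient convention for the $i\geq 0$ case, and relative Grothendieck--Serre duality via the relative Euler sequence for the $i\leq -n$ case. The determinant computation for $\omega_{\pi}$ and the sign bookkeeping are right.

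There is nothing to compare against, however: the paper does not give a proof of this proposition. It is stated as a citation to Exercise~III.8.4 of Hartshorne and is used as a black box (specifically as the model case preceding the two-term-complex generalization in Proposition~\ref{Proposition}). So your write-up supplies a proof where the paper simply invokes a reference.
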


The above result is generalized by Yu Zhao \cite{Z} to a two-term complex of locally free sheaves. More precisely, let $\Uu$ be a coherent sheaf on $X$ which admits a two-term locally free resolution, i.e. $\W \rightarrow \V \twoheadrightarrow \Uu \rightarrow 0$ where $\W$, $\V$ are locally free sheaves on $X$. We denote the rank of $\Uu$ to be $u=\rk \Uu= \rk \V - \rk \W \geq 1$.

Similarly we have the projectivization $\PP_{\quo}(\Uu)$ which is a closed subscheme of $\PP_{\quo}(\V)$, and  we denote $\pi_{\Uu}:\PP_{\quo}(\Uu) \rightarrow X$ to be the projection. We also have the tautological bundle $\Oo_{\PP_{\quo}(\Uu)}(-1)$ and its tensor power $\Oo_{\PP_{\quo}(\Uu)}(i)$ for $i \in \ZZ$. Then the above projective bundle formula can be generalized to the following.

\begin{proposition} [Lemma 3.9 in \cite{Z}] \label{Proposition}
\begin{equation*}
	\pi_{\Uu*} \Oo_{\PP_{\quo}(\Uu)}(i) \cong   \begin{cases} \Sym^i(\Uu) & \text{if} \ i \geq 0 \\ 0 & \text{if} \  1-u \leq i\leq -1 \\ \wedge^{-i-u}(\Uu^{\vee}) \otimes \tdet(\Uu)^{-1}[1-u]   & \text{if} \ i \leq -u \end{cases}.
\end{equation*}
\end{proposition}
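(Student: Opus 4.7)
The plan is to realize $\PP_{\quo}(\Uu)$ as a zero scheme inside $\PP_{\quo}(\V)$ and reduce the pushforward computation to Proposition \ref{proposition 3} via a Koszul resolution. Write $\pi_{\V}:\PP_{\quo}(\V)\to X$, set $n=\rk\V$ and $w=\rk\W$, so $u=n-w$. The composition $\pi_{\V}^{*}\W \to \pi_{\V}^{*}\V \twoheadrightarrow \Oo_{\PP_{\quo}(\V)}(1)$ corresponds to a section $s\in\Gamma(\PP_{\quo}(\V),\pi_{\V}^{*}\W^{\vee}\otimes\Oo(1))$ whose zero scheme is precisely $\PP_{\quo}(\Uu)\hookrightarrow\PP_{\quo}(\V)$. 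Using the injectivity of $\W\hookrightarrow\V$ (which holds because $\Uu$ has homological dimension $\leq 1$) together with the rank hypothesis on $\Uu$, the section $s$ is regular, yielding the Koszul resolution
\begin{equation*}
0\to \pi_{\V}^{*}\wedge^{w}\W\otimes\Oo(-w)\to\cdots\to \pi_{\V}^{*}\W\otimes\Oo(-1)\to\Oo_{\PP_{\quo}(\V)}\to j_{*}\Oo_{\PP_{\quo}(\Uu)}\to 0,
\end{equation*}
where $j:\PP_{\quo}(\Uu)\hookrightarrow\PP_{\quo}(\V)$ is the embedding.

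Next I would twist the Koszul complex by $\Oo(i)$, apply $\pi_{\V*}$, and use $\pi_{\Uu}=\pi_{\V}\circ j$ together with the projection formula so that $\pi_{\Uu*}\Oo_{\PP_{\quo}(\Uu)}(i)$ is computed as the derived pushforward of the twisted Koszul complex, whose individual terms are $\wedge^{k}\W\otimes \pi_{\V*}\Oo(i-k)$ and are given by Proposition \ref{proposition 3}. I would then run the hypercohomology spectral sequence $E_{1}^{p,q}=R^{q}\pi_{\V*}(K^{p})$ and split into three cases. \emph{Case $i\geq 0$}: for all $0\leq k\leq w$ we have $i-k\geq -w\geq 1-n$, so only the $R^{0}$-row is nonzero, the spectral sequence degenerates, and the pushforward is the complex
\begin{equation*}
\big[\wedge^{\min(i,w)}\W\otimes\Sym^{i-\min(i,w)}\V\to\cdots\to\W\otimes\Sym^{i-1}\V\to\Sym^{i}\V\big],
\end{equation*}
which is the Koszul-type resolution computing $\Sym^{i}\Uu$ from $0\to\W\to\V\to\Uu\to 0$. \emph{Case $1-u\leq i\leq -1$}: for all $k$ we have $i-k\in[1-n,-1]$ (using $1-u-w=1-n$), so every term's pushforward vanishes and the result is $0$.

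For the remaining \emph{case $i\leq -u$}, only Serre-dual contributions survive, namely those $k$ with $k\geq i+n$, each placed in internal cohomological degree $n-1$. Using the Hodge duality $\wedge^{k}\W\cong\tdet(\W)\otimes\wedge^{w-k}\W^{\vee}$ and the identity $\tdet(\W)\otimes\tdet(\V)^{-1}=\tdet(\Uu)^{-1}$ (from the short exact sequence), I would reindex the surviving single-row complex so that the total cohomological shift $-k+(n-1)$ collapses to place the answer in degree $u-1$, and identify it with $\wedge^{-i-u}(\Uu^{\vee})\otimes\tdet(\Uu)^{-1}[1-u]$ through the derived Schur functor formalism applied to the two-term complex $[\V^{\vee}\to\W^{\vee}]$ representing $\Uu^{\vee}$. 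The main obstacle I expect is this last identification: matching the complex produced by the Koszul spectral sequence with the intended derived wedge power of $\Uu^{\vee}$ (and, analogously, matching the Case $i\geq 0$ complex with $\Sym^{i}\Uu$) requires the Dold-Kan type identifications for Tor-amplitude $[-1,0]$ complexes and careful bookkeeping of signs and shifts. A secondary subtlety is verifying regularity of $s$ and exactness of the Koszul resolution at points where the fiber dimension of $\Uu$ may jump; this is where the homological dimension hypothesis is essential.
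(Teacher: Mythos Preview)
The paper does not prove this proposition; it is quoted verbatim from Zhao \cite[Lemma~3.9]{Z}, so there is no in-paper argument to compare your attempt against.  Your Koszul strategy is the natural first approach and, where it applies, it does produce the stated answer.

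The gap is your claim that the section $s$ is regular.  Regularity of $s$ is equivalent to $\PP_{\quo}(\Uu)$ having codimension exactly $w=\rk\W$ everywhere in $\PP_{\quo}(\V)$, and this is \emph{not} implied by the homological-dimension hypothesis.  That hypothesis only says $\W\to\V$ is injective as a map of sheaves; it places no constraint on the fibrewise rank of the map, which is what controls the codimension of $\PP_{\quo}(\Uu)$.  For a concrete counterexample take $X=\mathbb{A}^{1}$, $\W=\Oo_{X}^{2}$, $\V=\Oo_{X}^{3}$, and $\phi(w_{1},w_{2})=(tw_{1},tw_{2},0)$.  Then $\phi$ is injective on sheaves, so $\Uu=\coker\phi$ has homological dimension $1$ and $u=1\geq 1$, yet $\phi$ vanishes identically at $t=0$ and $\PP_{\quo}(\Uu)$ contains the entire fibre $\{0\}\times\PP^{2}$, giving codimension $1<2=w$.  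A direct check shows the Koszul complex acquires $H^{-1}\cong\Oo_{\PP_{\quo}(\V)}(-2)|_{t=0}\neq 0$, so it does not resolve $j_{*}\Oo_{\PP_{\quo}(\Uu)}$, and in the range $i\leq -u$ your identification of $R\pi_{\Uu*}\Oo(i)$ with the pushforward of the twisted Koszul complex is off by a nonzero correction term carried by this extra cohomology.

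The formula itself is still correct in such examples (one can verify it by hand in the case above), but the proof must either track the excess cohomology of the Koszul complex and show that the induced connecting maps always produce the required cancellation, or avoid the Koszul resolution entirely.  Your final paragraph correctly flags this point as a ``secondary subtlety,'' but it is in fact the main obstacle, and the homological-dimension hypothesis does not resolve it.
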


\begin{remark}
More recently, the above proposition has been generalized by Qingyuan Jiang \cite{Jia2} to the derived algebraic geometry setting, i.e. $X$ is a derived scheme or prestack, which is called the generalized Serre theorem. Since we will not need such a general version in our proof, we refer the reader to Theorem 5.6 in \textit{loc. cit.}.
\end{remark}

\section{Shifted $q=0$ affine algebras and its categorical action} \label{section 4}
In this section, we recall the definitions of the shifted $q=0$ affine algebra $\dot{\Uu}_{0.N}(L\SL_{n})$ and its categorical action. We also mention the result where there is a categorical action on the bounded derived categories of coherent sheaves on partial flag varieties.

Before we go to the detailed setup and the definition of the shifted $q=0$ affine algebra, we should give the readers some background for it. The main motivation comes from the study of categorical action and categorification of the quantum group $\Uu_{q}(\cg)$ and its idempotent version $\dot{\Uu}_{q}(\cg)$, where $\cg$ a semisimple or Kac-Moody Lie algebra.

We restrict to the case $\cg=\SL_{2}$ and keep in mind that all the results can be naturally generalized to the $\SL_{n}$ case. Based on the work \cite{BLM}, Beilinson-Lusztig-MacPherson give a geometric model for $\Uu_{q}(\SL_{2})$. This can be used to construct categorical $\Uu_{q}(\SL_{2})$-action. 

The weight space $V_{\lambda}$ is replaced by the weight category $\Cc(\lambda)=\Dd^bCon(\Gr(k,N))$, which is the bounded derived category of constructible sheaves on $\Gr(k,N)$ with $\lambda=N-2k$. The generators $e$, $f$ act on them by using the following correspondence diagram

\begin{equation} \label{diag 1}
	\xymatrix{ 
		&&\Fl(k-1,k)=\{0 \overset{k-1}{\subset} V' \overset{1}{\subset} V \overset{N-k}{\subset} \CC^N \} 
		\ar[ld]_{p_1} \ar[rd]^{p_2}   \\
		& \Gr(k,\CC^N)  && \Gr(k-1,\CC^N)
	}
\end{equation}  where $\Fl(k-1,k)$ is the 3-step partial flag variety and $p_{1}$, $p_{2}$ are the natural projections. Then we define 
${\E}:=p_{2*}p^{*}_{1}$ and a similar functor ${\F}$ in the opposite direction that can be viewed as lift of $e$ and $f$, respectively.  The functors ${\E}$, ${\F}$  satisfy the defining relations of $\Uu_{q}(\SL_{2})$, i.e., we have 
\begin{equation*} 
	{\E\F}|_{\Cc(\lambda)} \cong {\F\E}|_{\Cc(\lambda)} \bigoplus \bigoplus_{[\lambda] } \text{Id}_{\Cc(\lambda)} \ \text{if} \ \lambda \geq 0,
\end{equation*} similarly for $\lambda \leq 0$. Here $[n]:=q^{-n+1}+q^{-n+3}+....+q^{n-3}+q^{n-1}$ is the quantum integer and $\bigoplus_{[\lambda] }(.)$ denotes a graded direct sum. For example, $\bigoplus_{[2] } f =f[1] \bigoplus f[-1]$, where $[1]$ is the homological shift in the derived category that uses to keep track of the power of the variable $q$.

Motivated by the above construction, we now consider the weight categories to be $\Dd^b(\Gr(k,\CC^N))$, i.e., we replace the constructible sheaves by coherent sheaves. Let $\V, \ \V'$ to be the tautological bundles on $Fl(k-1,k)$ of rank $k$, $k-1$ respectively. Then there is a natural line bundle $\V/\V'$ on $Fl(k-1,k)$. Using the same correspondence (\ref{diag 1}), instead of just pulling back and pushing forward directly, we have an extra twist by the line bundles $(\V/\V')^{r}$ where $r \in \ZZ$. So we have the functors
\begin{equation*} 
	{\E}_{r}:=p_{2*}(p_{1}^{*}\otimes(\V/\V')^{r}):\Dd^{b}(\Gr(k,\CC^N)) \rightarrow \Dd^{b}(\Gr(k-1,\CC^N))
\end{equation*} and similarly ${\F}_{r}$ where $r \in \ZZ$.

The shifted $q=0$ affine algebra arises naturally from the studies of the  $L\SL_{2}:=\SL_{2} \otimes \CC[t,t^{-1}]$-like algebra acting on $\bigoplus_{k} \Dd^{b}(\Gr(k,\CC^N))$, where $e \otimes t^{r}$ and $f \otimes t^{s}$ acting via the functors ${\E}_{r}$, ${\F}_{s}$ respectively for $r, \ s \in \ZZ$. In \cite{Hsu}, the author is the first one to study this action in detail.  

After decategorifying (pass to the K-theory), we obtain an algebra with loop generators $e_{r}$, $f_{s}$ and relations that look similar to the shifted quantum affine algebra defined in \cite{FT}. There is a variable $q$ in their definition that stands for the $q$-deformation of quantum affine algebra. From the geometric aspect, it comes from the $\CC^*$-action. In our case, we do not have a natural $\CC^*$-action on Grassmannians, so we do not have a variable like $q$. 

We call the resulting algebra shifted $q=0$ affine algebra. First, the "$q=0$" part in the name comes from the reason that some of the relations can be obtained from the relations in shifted quantum affine algebra by taking $q=0$ directly. Second, the "shifted" comes from the fact that the commutator relations between $e_{r}$ and $f_{s}$ vanish within a certain range of $r+s$. Finally, even though the algebra is given by the loop generators $e_{r}$, $f_{s}$,  the representation $\bigoplus_{k}K(\Gr(k,\CC^N))$ we study is finite-dimensional. Since affine algebras are central extensions of loop algebras, and central extension acts trivially on the finite-dimensional representations, we still use "affine algebra" in the name.

\subsection{Shifted $q=0$ affine algebras}

In this section, we define the shifted $q=0$ affine algebras. By imitating the presentation in \cite{FT} (the so-called Levedorskii type presentation), the presentation we use here is by finite numbers of generators and defining relations. In \cite{Hsu}, we also give a conjectural presentation defined by generating series and conjecture that the two presentations are equivalent; see conjecture A.2. in  \textit{loc. cit.}.

Similarly to the dot version $\dot{\Uu}_{q}(\SL_2)$ of $\Uu_{q}(\SL_2)$ that introduced in \cite{BLM}, the shifted $q=0$ affine algebras we introduce below is also an idempotent version. This means that we replace the identity with the direct sum of a system of projectors, one for each element of the weight lattices. They are orthogonal idempotents for approximating the unit element. We refer to part IV in \cite{Lu} for details of such modification.

Throughout the rest of this article, we fix a positive integer $N \geq 2$. Let
\[
C(n,N):=\{\underline{k}=(k_1,...,k_{n}) \in \NN^n\ | \ k_{1}+...+k_{n}=N \}.
\] 

We regard each $\underline{k}$ as a weight for $\SL_{n}$ via the identification of the weight lattice of $\SL_n$ with the quotient $\ZZ^n/(1,1,...,1)$. We choose the simple root $\alpha_{i}$ to be $(0...0,-1,1,0...0)$ where the $-1$ is in the $i$-th position for $1 \leq i \leq n-1$. Finally, we denote $\langle\cdot,\cdot\rangle:\ZZ^n \times \ZZ^n \rightarrow \ZZ$ to be the standard pairing.

\begin{definition} \label{definition 1}
The \textit{shifted $q=0$ affine algebras}, denoted by $\dot{\Uu}_{0,N}(L\SL_n)$, is the associative $\CC$-algebra generated by
\begin{equation*}
\{1_{\kk}, \ e_{i,r}1_{\kk}, \ f_{i,s}1_{\kk},\ (\psi^{+}_{i})^{\pm 1}1_{\kk}, \ (\psi^{-}_{i})^{\pm 1}1_{\kk},\ h_{i,\pm 1}1_{\kk} \ | \ \kk \in C(n,N), \ 1 \leq i \leq n-1, \ -k_{i}-1 \leq r \leq 0, \ 0 \leq s \leq k_{i+1}+1 \}
\end{equation*}
	with the following relations
	\begin{equation} \tag{U01}   \label{U01}
		1_{\kk}1_{\Ll}=\delta_{\kk,\Ll}1_{\kk}, \ e_{i,r}1_{\kk}=1_{\kk+\alpha_{i}}e_{i,r}, \   f_{i,r}1_{\kk}=1_{\kk-\alpha_{i}}f_{i,r}, \ (\psi^{+}_{i})^{\pm 1}1_{\kk}=1_{\kk}(\psi^{+}_{i})^{\pm 1},\
		h_{i,\pm 1}1_{\kk}=1_{\kk}h_{i, \pm 1},	
	\end{equation}
	\begin{equation}\tag{U02}   \label{U02}
		\{(\psi^{+}_{i})^{\pm 1}1_{\kk},(\psi^{-}_{i})^{\pm 1}1_{\kk},h_{i,\pm 1}1_{\kk}\ | \ 1\leq i \leq n-1,\kk \in C(n,N)\} \ \mathrm{pairwise\ commute},
	\end{equation}
	\begin{equation} \tag{U03} \label{U03}
		(\psi^{+}_{i})^{\pm 1} \cdot (\psi^{+}_{i})^{\mp 1} 1_{\kk} = 1_{\kk}=(\psi^{-}_{i})^{\pm 1} \cdot (\psi^{-}_{i})^{\mp 1}1_{\kk},
	\end{equation}
	\begin{align*} \tag{U04} \label{U04}
		&e_{i,r}e_{j,s}1_{\kk}=\begin{cases}
			-e_{i,s+1}e_{i,r-1}1_{\kk} & \text{if} \ j=i \\
			e_{i+1,s}e_{i,r}1_{\kk}-e_{i+1,s-1}e_{i,r+1}1_{\kk} & \text{if} \ j=i+1 \\
			e_{i,r+1}e_{i-1,s-1}1_{\kk}-e_{i-1,s-1}e_{i,r+1}1_{\kk} &\text{if} \ j=i-1 \\
			e_{j,s}e_{i,r}1_{\kk} &\text{if} \ |i-j| \geq 2
		\end{cases},
	\end{align*}
	
	\begin{align*}\tag{U05} \label{U05}
		&f_{i,r}f_{j,s}1_{\kk}=\begin{cases}
			-f_{i,s-1}f_{i,r+1}1_{\kk} & \text{if} \ j=i \\
			f_{i,r-1}f_{i+1,s+1}1_{\kk}-f_{i+1,s+1}f_{i,r-1}1_{\kk} & \text{if} \ j=i+1 \\
			f_{i-1,s}f_{i,r}1_{\kk}-f_{i-1,s+1}f_{i,r-1}1_{\kk} &\text{if} \ j=i-1 \\
			f_{j,s}f_{i,r}1_{\kk} &\text{if} \ |i-j| \geq 2
		\end{cases},
	\end{align*}
	
	
	\begin{align*} \tag{U06} \label{U06}
		&\psi^{+}_{i}e_{j,r}1_{\kk}=
		\begin{cases}
			-e_{i,r+1}\psi^{+}_{i}1_{\kk} & \text{if} \ j=i \\
			-e_{i+1,r-1}\psi^{+}_{i}1_{\kk} & \text{if} \ j=i+1 \\
			e_{i-1,r}\psi^{+}_{i}1_{\kk}  &  \text{if} \ j=i-1 \\
			e_{j,r}\psi^{+}_{i}1_{\kk}  &  \text{if} \ |i-j| \geq 2 \\
		\end{cases},
		&\psi^{-}_{i}e_{j,r}1_{\kk}=
		\begin{cases}
			-e_{i,r+1}\psi^{-}_{i}1_{\kk} & \text{if} \ j=i \\
			e_{i+1,r}\psi^{-}_{i}1_{\kk} & \text{if} \ j=i+1 \\
			-e_{i-1,r-1}\psi^{-}_{i}1_{\kk}  &  \text{if} \ j=i-1 \\
			e_{j,r}\psi^{-}_{i}1_{\kk}  &  \text{if} \ |i-j| \geq 2 \\
		\end{cases},
	\end{align*}
	\begin{align*} \tag{U07} \label{U07}
		&\psi^{+}_{i}f_{j,r}1_{\kk}=
		\begin{cases}
			-f_{i,r-1}\psi^{+}_{i}1_{\kk} & \text{if} \ j=i \\
			-f_{i+1,r+1}\psi^{+}_{i}1_{\kk} & \text{if} \ j=i+1 \\
			f_{i-1,r}\psi^{+}_{i}1_{\kk}  &  \text{if} \ j=i-1 \\
			f_{j,r}\psi^{+}_{i}1_{\kk}  &  \text{if} \ |i-j| \geq 2 \\
		\end{cases},
		&\psi^{-}_{i}f_{j,r}1_{\kk}=
		\begin{cases}
			-f_{i,r-1}\psi^{-}_{i}1_{\kk} & \text{if} \ j=i \\
			f_{i+1,r}\psi^{-}_{i}1_{\kk} & \text{if} \ j=i+1 \\
			-f_{i-1,r+1}\psi^{-}_{i}1_{\kk}  &  \text{if} \ j=i-1 \\
			f_{j,r}\psi^{-}_{i}1_{\kk}  &  \text{if} \ |i-j| \geq 2 \\
		\end{cases},
	\end{align*}
	
	
	\begin{align*} \tag{U08} \label{U08} 
		&[h_{i,\pm 1},e_{j,r}]1_{\kk}=\begin{cases}
			0 & \text{if} \ i=j \\
			-e_{i+1,r\pm 1}1_{\kk} & \text{if} \ j=i+1 \\
			e_{i-1,r\pm 1}1_{\kk} & \text{if} \  j=i-1 \\
			0& \text{if} \ |i-j| \geq 2 \\
		\end{cases}, 
		&[h_{i,\pm 1},f_{j,r}]1_{\kk}=\begin{cases}
			0 & \text{if} \ i=j \\
			f_{i+1,r\pm 1}1_{\kk} & \text{if} \ j=i+1 \\
			-f_{i-1,r\pm 1}1_{\kk} & \text{if} \  j=i-1 \\
			0& \text{if} \ |i-j| \geq 2 \\
		\end{cases},
	\end{align*}
	
	\begin{equation} \tag{U09} \label{U09} 
		[e_{i,r},f_{j,s}]1_{\kk}=0 \ \mathrm{if} \ i \neq j\ \ \mathrm{and}\ \  [e_{i,r},f_{i,s}]1_{\kk}= \begin{cases}
			\psi^{+}_{i}h_{i,1}1_{\kk} & \text{if} \  r+s=k_{i+1}+1 \\
			\psi^{+}_{i}1_{\kk} & \text{if} \ r+s=k_{i+1} \\
			0 & \text{if} \  -k_{i}+1 \leq r+s \leq k_{i+1}-1 \\
			-\psi^{-}_{i}1_{\kk} & \text{if} \ r+s=-k_{i} \\
			-\psi^{-}_{i}h_{i,-1}1_{\kk} & \text{if} \ r+s=-k_{i}-1
		\end{cases},
	\end{equation}
	
	for any $1 \leq i,j \leq n-1$ and $r,s$ such that the above relations make sense.
\end{definition}

\subsection{Categorical $\dot{\Uu}_{0,N}(L\SL_n)$ action}

In this section, we recall the definition of the categorical action for shifted $q=0$ affine algebra that is defined in \cite{Hsu}. However, since most of the relations in the categorical action will not be used for the proof of our results, we only list those that will be used and call such action a \textit{partial categorical action}. 
We refer the readers to Definition 3.1. in \textit{loc. cit.} for a full definition of the categorical action.

\begin{definition} \label{definition 2}
	A \textit{partial categorical $\dot{\Uu}_{0,N}(L\SL_n)$ action} consists of a target 2-category $\Kk$, which is triangulated, $\CC$-linear and idempotent complete. The objects in $\Kk$ are
	\[
	\mathrm{Ob}(\Kk)=\{\Kk(\kk)\ |\ \kk \in C(n,N) \}
	\] where each $\Kk(\kk)$ is a triangulated category, and  $\mathrm{Hom}(\Kk(\kk),\Kk(\Ll))$ is also a triangulated category for all $\kk, \Ll \in C(n,N)$.  On those objects we equip with the following 1-morphisms $\bo_{\kk}$, ${\E}_{i,r}\bo_{\kk}=\bo_{\kk+\alpha_{i}}{\E}_{i,r}$, ${\F}_{i,s}\bo_{\kk}=\bo_{\kk-\alpha_{i}}{\F}_{i,s}$, $({\spi}^{\pm}_{i})^{\pm 1}\bo_{\kk}=\bo_{\kk}({\spi}^{\pm}_{i})^{\pm 1}$, ${\Hhh}_{i,\pm1}\bo_{\kk}=\bo_{\kk}{\Hhh}_{i,\pm 1}$   where $1 \leq i \leq n-1$, $-k_{i}-1 \leq r \leq 0$, $0 \leq s \leq k_{i+1}+1$. Here $\bo_{\kk}$ is the identity 1-morphism of $\Kk(\kk)$.
	
	On this data, we impose the following conditions
	
	\begin{enumerate}
		\item The space of maps between any two 1-morphisms is finite-dimensional.
		\item If $\alpha=\alpha_{i}$ or $\alpha=\alpha_{i}+\alpha_{j}$ for some $i,j$ with $\langle\alpha_{i},\alpha_{j}\rangle=-1$, then $\bo_{\kk+r\alpha}=0$ for $r \gg 0$ or  $r \ll 0$.
		\item Suppose $i \neq j$. If $\bo_{\kk+\alpha_{i}}$ and $\bo_{\kk+\alpha_{j}}$ are nonzero, then $\bo_{\kk}$ and $\bo_{\kk+\alpha_{i}+\alpha_{j}}$ are also nonzero.
		\item The right adjoint of ${\E}_{i,r}$ and ${\F}_{i,s}$ are given by conjugation of $\spi^{\pm}_{i}$ up to homological shifts. More precisely, 
		\begin{enumerate}
		\item $({\E}_{i,r}\bo_{\kk})^{R} \cong \bo_{\kk}{({\spi}^{+}_{i})^{r+1}}{\F}_{i,k_{i+1}+2}({\spi}^{+}_{i})^{-r-2}[-r-1]$ for all $1 \leq i \leq n-1$,
		\item $({\F}_{i,s}\bo_{\kk})^{R} \cong \bo_{\kk}({\spi}^{-}_{i})^{-s+1}{\E}_{i,-k_{i}-2}({\spi}^{-}_{i})^{s-2}[s-1]$ for all $1 \leq i \leq n-1$.
	\end{enumerate}
		
		\item The relations between ${\E}_{i,r}$, ${\F}_{i,s}$, $\Psi^{\pm}_{i}$ are given by the following
			\begin{enumerate}
				\item ${\spi}^{\pm}_{i}{\E}_{i,r}\bo_{\kk} \cong {\E}_{i,r+1}{\spi}^{\pm}_{i}\bo_{\kk}[\mp 1]$.
				\item ${\spi}^{\pm}_{i}{\F}_{i,s}\bo_{\kk} \cong {\F}_{i,s-1}{\spi}^{\pm}_{i}\bo_{\kk}[\pm 1]$.
			\end{enumerate}
	
		\item For ${\E}_{i,r}{\F}_{i,s}\bo_{\kk}, {\F}_{i,s}{\E}_{i,r}\bo_{\kk} \in \mathrm{Hom}(\Kk(\kk),\Kk(\kk))$, they are related by  exact triangles, more precisely, 
		\begin{enumerate}
			\item \[
			{\F}_{i,s}{\E}_{i,r}\bo_{\kk} \rightarrow {\E}_{i,r}{\F}_{i,s}\bo_{\kk} \rightarrow {\spi}^{+}_{i}\bo_{\kk} 
			\ \text{if} \  
			r+s =k_{i+1},
			\] 
			\item \[
			{\E}_{i,r}{\F}_{i,s}\bo_{\kk} \rightarrow {\F}_{i,s}{\E}_{i,r}\bo_{\kk} \rightarrow {\spi}^{-}_{i}\bo_{\kk}   \ \text{if} \  r+s = -k_{i},
			\] 
			\item \[
			{\F}_{i,s}{\E}_{i,r}\bo_{\kk} \cong {\E}_{i,r}{\F}_{i,s}\bo_{\kk} \ \text{if} \ -k_{i}+1 \leq r+s \leq k_{i+1}-1 .
			\] 
		\end{enumerate}
	\end{enumerate}
	for all $r$, $s$ that make the above conditions make sense, and the isomorphisms between functors that appear in every condition are abstractly defined, i.e., we do not specify any 2-morphisms that induce those isomorphisms. Also, we do not require any conditions on ${\Hhh}_{i,\pm1}\bo_{\kk}$.
\end{definition}

\subsection{Geometric example}

In this section, we mention the partial main result, i.e., Theorem 5.2 in \cite{Hsu}, which says that there is a  \textit{partial categorical action} of $\dot{\Uu}_{0,N}(L\SL_n)$ on $\bigoplus_{\kk}\Dd^b(\Fl_{\kk}(\CC^N))$.

We denote $Y(\kk)=\Fl_{\kk}(\CC^N)$ and $\Dd^b(Y(\kk))$ will be the objects $\Kk(\kk)$ of the triangulated $2$-category $\Kk$ in Definition \ref{definition 2}. To define those $1$-morphisms  ${\E}_{i,r}\bo_{\kk}$, ${\F}_{i,s}\bo_{\kk}$, ${\Hhh}_{i,\pm 1}\bo_{\kk}$, $({\spi}^{\pm}_{i})^{\pm 1}\bo_{\kk}$, we use the language of FM transforms, that means we will define them by using FM kernels. However, since the FM kernels for ${\Hhh}_{i,\pm 1}\bo_{\kk}$ will not be used in the rest of this article, we only mention the FM kernels for ${\E}_{i,r}\bo_{\kk}$, ${\F}_{i,s}\bo_{\kk}$, $({\spi}^{\pm}_{i})^{\pm 1}\bo_{\kk}$ for simplicity.

We define correspondences $W^{1}_{i}(\kk) \subset Y(\kk) \times Y(\kk+\alpha_{i})$ by
\begin{equation*} 
	W^{1}_{i}(\kk):=\{(V_{\bullet},V_{\bullet}') \in  Y(\kk) \times Y(\kk+\alpha_{i}) \ | \ V_{j}=V'_{j} \ \Rm{for} \ j \neq i,  \ V'_{i} \subset V_{i}\}, 
\end{equation*} then we have the natural line bundle $\V_{i}/\V'_{i}$ on $W^{1}_{i}(\kk)$ where $\V_{i}$ and $\V'_{i}$ are the tautological bundles on $W^{1}_{i}(\kk)$ whose fibres over a point $(V_{\bullet},V_{\bullet}')$ equal to $V_{i}$ and $V'_{i}$, respectively.

We also have the transpose correspondence $^{\TTt}W^{1}_{i}(\kk) \subset Y(\kk+\alpha_{i}) \times Y(\kk)$. Let $\iota(\kk):W^{1}_{i}(\kk) \hookrightarrow Y(\kk) \times Y(\kk+\alpha_{i})$, $^{\TTt}\iota(\kk):^{\TTt}W^{1}_{i}(\kk) \hookrightarrow Y(\kk+\alpha_{i}) \times Y(\kk)$ be the inclusions, and $\Delta(\kk):Y(\kk) \rightarrow Y(\kk) \times Y(\kk)$ be the diagonal map. Then we have the following theorem.

\begin{theorem} [Theorem 5.2 \cite{Hsu}] \label{Theorem 3}
Let $\Kk$ be the triangulated 2-categories whose nonzero objects are $\Kk(\kk)=\Dd^b(Y(\kk))$ where $\kk \in C(n,N)$. The 1-morphisms ${\E}_{i,r}\bo_{\kk}$, $\bo_{\kk}{\F}_{i,s}$, $({\spi}^{\pm}_{i})^{\pm 1}\bo_{\kk}$ are FM transformations with kernels given by $\Ee_{i,r}\bo_{\kk}:=\iota(\kk)_{*} (\V_{i}/\V_{i}')^{r}$,
$\bo_{\kk}\Ff_{i,s}:=^{\TTt}\iota(\kk)_{*} (\V_{i}/\V'_{i})^{s}$, $(\Psi^{+}_{i})^{\pm 1}\bo_{\kk}:=\Delta(\kk)_{*}\tdet (\V_{i+1}/\V_{i})^{\pm 1}[\pm(1-k_{i+1})]$, $(\Psi^{-}_{i})^{\pm 1}\bo_{\kk}:=\Delta(\kk)_{*} \tdet (\V_{i}/\V_{i-1})^{\mp 1}[\pm(1-k_{i})]$ respectively, and the 2-morphisms are maps between kernels.  Then this gives a partial categorical $\dot{\Uu}_{0,N}(L\SL_n)$ action.
\end{theorem}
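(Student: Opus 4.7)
The plan is to verify each of the conditions (1)--(6) in Definition \ref{definition 2} for the 1-morphisms constructed from the Fourier--Mukai kernels on the correspondence varieties $W^{1}_{i}(\kk)$. The strategy throughout is to translate every functorial relation into an explicit computation with convolutions of FM kernels (Proposition \ref{Proposition 2}) and to invoke the projective bundle formula (Proposition \ref{proposition 3}) on the relevant fiber products. Conditions (1)--(3) are essentially geometric: smoothness and projectivity of $Y(\kk)=\Fl_{\kk}(\CC^N)$ gives finite-dimensional Hom spaces (condition (1)); since $Y(\kk)$ is empty whenever a component of $\kk$ is negative, $\bo_{\kk+r\alpha}=0$ as soon as $|r|$ is large enough, yielding (2); and condition (3) follows by direct inspection of which components of $\kk$ remain nonnegative.

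For condition (4), I would apply Proposition \ref{Proposition 1} to the kernel $\Ee_{i,r}\bo_{\kk}=\iota(\kk)_{*}(\V_{i}/\V'_{i})^{r}$: computing the dual and twisting by the canonical bundle of $Y(\kk)$ (itself expressible in terms of determinants of the tautological quotients $\V_{j+1}/\V_{j}$) produces a kernel supported on the transposed correspondence with a specific line-bundle twist and a homological shift. Using Remark \ref{remark 1}, I would then expand the right-hand side of (4)(a)--(b) as a convolution of the diagonal kernels for $\spi_{i}^{\pm}$ with the kernel for $\F_{i,k_{i+1}+2}$, and match the resulting line-bundle twist and shift with the computed adjoint kernel. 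Condition (5) is a more direct convolution calculation: since $(\spi_{i}^{\pm})^{\pm 1}\bo_{\kk}$ is diagonal-supported, convolving it with $\Ee_{i,r}\bo_{\kk}$ amounts to tensoring by $\det(\V_{i+1}/\V_{i})^{\pm 1}$ or $\det(\V_{i}/\V_{i-1})^{\mp 1}$ pulled back along $\iota(\kk)$, which precisely shifts the exponent $r\mapsto r+1$ and introduces a homological shift of $\pm 1$.

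The main obstacle is condition (6), the $\E\F$ versus $\F\E$ relations. To compute $\F_{i,s}\E_{i,r}\bo_{\kk}$, I would analyze the fiber product $W^{1}_{i}(\kk)\times_{Y(\kk+\alpha_{i})}{}^{\TTt}W^{1}_{i}(\kk+\alpha_{i})$, which parametrizes flags together with two codimension-one choices of subspace; set-theoretically it splits into a ``generic'' component isomorphic to a projective bundle over a flag variety and a diagonal component where the two choices coincide. A symmetric analysis handles $\E_{i,r}\F_{i,s}\bo_{\kk}$ via the fiber product over $Y(\kk-\alpha_{i})$. Applying Proposition \ref{proposition 3} to the projection from the generic component onto the diagonal of $Y(\kk)$, the pushforward of the relevant line-bundle twist $(\V_{i}/\V'_{i})^{r}\otimes(\V_{i}/\V''_{i})^{s}$ vanishes in the middle range $-k_{i}+1\leq r+s\leq k_{i+1}-1$, forcing $\F_{i,s}\E_{i,r}\bo_{\kk}\cong\E_{i,r}\F_{i,s}\bo_{\kk}$ and hence (6)(c); at the boundary values $r+s=k_{i+1}$ or $r+s=-k_{i}$ the pushforward acquires a residual contribution supported on the diagonal, which is exactly the $\spi^{\pm}_{i}\bo_{\kk}$ term of the exact triangles in (6)(a)--(b). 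The subtle point is to organize the excess intersection along the diagonal via a Koszul resolution so that the connecting map of the triangle is canonically identified with the asserted 2-morphism to $\spi^{\pm}_{i}\bo_{\kk}$. As these computations are carried out in full detail in Theorem 5.2 of \cite{Hsu}, the proof in the present article reduces to citing \emph{loc.\ cit.} after checking that only the listed conditions of the partial action are needed.
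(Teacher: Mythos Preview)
Your proposal is correct and matches the paper's treatment: the paper does not prove Theorem~\ref{Theorem 3} here but simply cites Theorem~5.2 of \cite{Hsu}, exactly as you conclude in your final sentence. Your sketch of the verification is sound and in fact parallels the computations the paper carries out later in Section~5 (Lemmas~\ref{lemma 3}, \ref{lemma 4} and Theorem~\ref{Theorem 8}) for the analogous action on $\Dd^b(\Gr(\GGG,k))$.
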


\section{Main results}

In this section, we prove the main result of this article and state the consequence. Roughly speaking, it says that certain functors in the categorical action of $\dot{\Uu}_{0,N}(L\SL_{n})$ satisfy the semiorthogonal property, and thus give rise to $\SOD$ of each weight category. 

\subsection{Main theorem}

The following is the main result of this article.
\begin{theorem} \label{Proposition 4}
	Given a partial categorical $\dot{\Uu}_{0,N}(L\SL_n)$ action $\Kk$. Considering the functors
	\begin{equation*}
		{\F}_{1,\blam(1)}{\F}_{2,\blam(2)} ... {\F}_{n-1, \blam(n-1)} \bo_{\eta} \in \Hom(\Kk(\eta),\Kk(\kk))
	\end{equation*}  where ${\F}_{i,\blam(i)}:= {\F}_{i, \lambda(i)_1} {\F}_{i, \lambda(i)_2} ...  {\F}_{i, \lambda(i)_{\Bbbk_{i}}}$ with $\blam(i)=(\lambda(i)_{1},...,\lambda(i)_{\Bbbk_{i}}) \in P(k_{i+1},\Bbbk_{i})$, $\Bbbk_{i}=\sum_{j=1}^{i}k_{j}$ for all $1 \leq i \leq n-1$, and $\eta=(0,0,...,0,N)$ be the highest weight. Then they satisfy the following properties 
	\begin{enumerate}
		\item $\Hom({\F}_{1,\blam(1)}{\F}_{2,\blam(2)} ... {\F}_{n-1, \blam(n-1)} \bo_{\eta},{\F}_{1,\blam(1)}{\F}_{2,\blam(2)} ... {\F}_{n-1, \blam(n-1)}\bo_{\eta}) \cong \Hom(\bo_{\eta},\bo_{\eta})$,
		\item   $\Hom({\F}_{1,\blam(1)}{\F}_{2,\blam(2)} ... {\F}_{n-1, \blam(n-1)} \bo_{\eta},{\F}_{1,\blam(1)'}{\F}_{2,\blam(2)'} ... {\F}_{n-1, \blam(n-1)'}\bo_{\eta}) \cong 0$ if $(\blam(1),...,\blam(n-1)) <_{pl}  (\blam(1)',...,\blam(n-1)')$ where $<_{pl}$ denotes the product lexicographic order; i.e., there exists $1 \leq i \leq n-1$ such that $\blam(j)=\blam(j)'$ for all $0 \leq j \leq i-1$ and $\blam(i) <_{l} \blam(i)'$.
	\end{enumerate}
\end{theorem}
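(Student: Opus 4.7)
My plan is to use the adjunctions built into the categorical action to convert the two Hom computations into questions about iterated $1$-morphism compositions applied to the highest-weight object $\bo_\eta$, and then exploit a critical vanishing $\E_{i,r}\bo_\eta=0$ to collapse the resulting expressions. Writing $F_\blam := {\F}_{1,\blam(1)}\cdots{\F}_{n-1,\blam(n-1)}$ and similarly $F_{\blam'}$, adjunction gives
\[
  \Hom(F_\blam\bo_\eta,F_{\blam'}\bo_\eta)\cong \Hom(\bo_\eta,\, F_\blam^R F_{\blam'}\bo_\eta).
\]
The key vanishing holds because, for $\eta=(0,\ldots,0,N)$, the weight $\eta+\alpha_i$ has a $-1$ in position $i$ for every $1\le i\le n-1$, so $\bo_{\eta+\alpha_i}=0$ and hence $\E_{i,r}\bo_\eta=0$ for every $i,r$.

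I would first simplify the right adjoints. By (4b) of Definition~\ref{definition 2}, $({\F}_{i,s}\bo_\kk)^R\cong \bo_\kk(\spi^-_i)^{-s+1}\E_{i,-k_i-2}(\spi^-_i)^{s-2}[s-1]$; iterating the commutation $(\spi^-_i)^a\E_{i,r}\cong \E_{i,r+a}(\spi^-_i)^a[a]$ derived from (5a) collapses this to the cleaner form $\bo_\kk\E_{i,-k_i-s-1}(\spi^-_i)^{-1}$. Crucially, in the composite
\[
  F_\blam^R F_{\blam'} = ({\F}_{n-1,\blam(n-1)})^R\cdots({\F}_{1,\blam(1)})^R\,{\F}_{1,\blam(1)'}\cdots{\F}_{n-1,\blam(n-1)'},
\]
the blocks at each level $i$ meet each other directly, so the innermost interaction is $({\F}_{1,\blam(1)})^R\,{\F}_{1,\blam(1)'}$ and the whole expression decomposes naturally level by level.

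The proof proceeds by an outer induction on the level $i$ (from $1$ upwards) and an inner induction on the block length $\Bbbk_i$. At each level, the analysis uses condition (6): the bulk case (6c) gives $\E_i{\F}_i\cong {\F}_i\E_i$ whenever $-k_i+1\le r+s\le k_{i+1}-1$, while the boundary cases (6a), (6b) produce exact triangles with $\spi^\pm_i$-residues. For property (2), let $i_0$ be the first level on which $\blam(i_0)\ne\blam(i_0)'$ and $j_0$ the first coordinate with $\lambda(i_0)_{j_0}<\lambda(i_0)'_{j_0}$; at this pair, the simplified adjoint formula puts the corresponding $\E_{i_0}{\F}_{i_0}$-interaction strictly inside the bulk range of (6c), so the $\E_{i_0}$-factor commutes past all remaining ${\F}_{i_0}$'s, and in the recursion also past ${\F}_j$ for $j>i_0$, until it acts on $\bo_\eta$ and vanishes. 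For property (1), every pair $({\F}_{i,\lambda})^R\,{\F}_{i,\lambda}$ lands on the boundary value $r+s=-k_i$, invoking the triangle (6b); telescoping these residues across all blocks and cancelling the accumulated shifts yields exactly $\bo_\eta$.

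The main obstacle will be the bookkeeping of homological shifts and the handling of the $\spi^\pm_i$-residues across levels. In particular, each boundary triangle (6b) leaves behind a $\spi^-_i\bo_{\kk}$-factor that must be moved past the remaining operators using (5b) and then shown to remain compatible with the highest-weight vanishing at subsequent levels; moreover, the shifts accumulated from the adjoint simplification, the $\spi$-conjugations, and the boundary triangles must balance exactly to produce the identity $\Hom(\bo_\eta,\bo_\eta)$ on the nose in case (1). I expect this to go through by a clean induction on $n$, with the Grassmannian case $n=2$ (where only a single level of $\E$-${\F}$ exchange occurs) serving as the base.
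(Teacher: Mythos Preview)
Your strategy — pass to the right adjoint, peel the composition level by level from the inside out, and ultimately reduce to an $\SL_2$ computation at each level — is exactly the paper's approach. The adjoint simplification you give via (4b) and (5a) is correct, and the base case $n=2$ indeed drives everything.

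There is, however, one concrete gap in your plan for property~(2). You write that at level $i_0$ the $\E_{i_0}$-factor will commute past ``all remaining $\F_{i_0}$'s, and in the recursion also past $\F_j$ for $j>i_0$, until it acts on $\bo_\eta$ and vanishes.'' The second half of this is neither available nor needed. Definition~\ref{definition 2} is a \emph{partial} categorical action: it contains no relation between $\E_{i}$ and $\F_{j}$ for $i\neq j$, so you cannot move $\E_{i_0}$ through the higher-level blocks $\F_{i_0+1,\blam(i_0+1)'},\ldots,\F_{n-1,\blam(n-1)'}$. Fortunately you never have to. The weight immediately to the right of the level-$i_0$ interaction (i.e.\ after applying $\F_{n-1,\blam(n-1)'}\cdots\F_{i_0+1,\blam(i_0+1)'}$ to $\bo_\eta$) is $(0,\ldots,0,\Bbbk_{i_0+1},k_{i_0+2},\ldots,k_n)$, which already has $0$ in position $i_0$. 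Hence $\E_{i_0,r}$ vanishes \emph{there}, not at $\bo_\eta$. In other words, the entire level-$i_0$ $\SL_2$ computation is self-contained: once you show $(\F_{i_0,\blam(i_0)})^R\F_{i_0,\blam(i_0)'}\bo_{(0,\ldots,0,\Bbbk_{i_0+1},k_{i_0+2},\ldots,k_n)}\cong 0$ (or $\cong\bo$ when $\blam(i_0)=\blam(i_0)'$), you can post-compose with the remaining $(\F_{j,\blam(j)})^R$'s without ever needing cross-index relations. The paper states this as ``the calculation only depends on the first two terms in the weight.''

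A second, smaller point: within a single level the paper does not push a lone $\E$ rightward through the $\F$'s. Rather, it moves the leftmost $\F$ (from the $\blam'$ side) \emph{leftward} through the chain of $\E$'s in the adjoint, using (5b) to absorb the $\spi^-$ factors and (6c) at each exchange, until it reaches the outermost $\E$ acting at the weight with $i_0$-th coordinate $0$, where it dies. For property~(1) the same mechanism is used, but now the first exchange lands exactly on the boundary $r+s=-k_{i_0}$ and (6b) produces a triangle; one branch vanishes by the same commutation lemma, and the surviving $\spi^-$-residue is what telescopes. Your description of this telescoping is correct in spirit, but you should be explicit that each boundary triangle collapses to an isomorphism because its third vertex is annihilated by the very same vanishing argument.
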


Note that the first property is not the same as the definition of \textit{relative exceptional objects} defined in \cite{BLMHAP} (see Remark \ref{remark 2} for an explanation), and the second property implies that the collection of such functors forms a semiorthogonal collection.

Moreover, there are two direct consequences that are easy to see. The first one is each functor ${\F}_{1,\blam(1)}{\F}_{2,\blam(2)} ... {\F}_{n-1, \blam(n-1)} \bo_{\eta}$ is fully-faithful. The second one is the subcategories that are generated by the essential image of the collection of such functors giving rise to a $\SOD$ of the weight category $\Kk(\kk)$. 

Instead of stating these (abstract) corollaries first, we decide to let the readers know some concrete examples. By Theorem \ref{Theorem 3} there is a partial categorical $\dot{\Uu}_{0,N}(L\SL_n)$ action on the derived categories of coherent sheaves on $n$-step partial flag varieties. 

So now the weight categories are $\Kk(\kk)=\Dd^b(\Fl_{\kk}(\CC^N))$. We also know that each functor ${\F}_{i,s}$ is given by an FM kernel $\Ff_{i,s}$. By Proposition \ref{Proposition 2}, the composition of functors is given by the convolution of FM kernels. Considering $\blam(i)=(\blam(i)_{1},...,\blam(i)_{\Bbbk_{i}}) \in P(k_{i+1},\Bbbk_{i})$ for all $1 \leq i \leq n-1$, then we obtain
\begin{equation} 
	\bigotimes_{i=1}^{n-1} \s_{\blam(i)}\V_{i}
	\cong \Ff_{1,\blam(1)} \ast \Ff_{2,\blam(2)} \ast ... \ast \Ff_{n-1, \blam(n-1)} \bo_{\eta}	 \in \Dd^b(\Fl_{\eta}(\CC^N)\times \Fl_{\kk}(\CC^N)),  \label{eq 3}
\end{equation} where $\Ff_{i,\blam(i)} := \Ff_{i, \blam(i)_{1}} \ast \Ff_{i, \blam(i)_{2}} ... \ast \Ff_{i, \blam(i)_{\Bbbk_{i}}}$ and  $\eta:=(0,0,...,0,N)$ is the highest weight.  

Note that when $n=2$ and $(k_1,k_2)=(k,N-k)$, (\ref{eq 3}) becomes (\ref{eq 16}) in Subsubsection \ref{subsub1.3} which is the Kapranov exceptional collection for Grassmannians $\Gr(k,\CC^N)$
\begin{equation*}
	\s_{\blam}\V \cong \Ff_{\lambda_{1}} \ast ... \ast \Ff_{\lambda_{k}} \bo_{(0,N)}  \in \Dd^b(\Gr(0,\CC^N)\times \Gr(k,\CC^N)).
\end{equation*}



From Theorem \ref{Theorem 1}, \ref{Theorem 2} we know that $\{\s_{\lambda}\V \}$ and $\{ \bigotimes_{i=1}^{n-1} \s_{\lambda_{i}}\V_{i} \}$ are full exceptional collections for $\Dd^b(\Gr(k,\CC^N))$ and $\Dd^b(\Fl_{\kk}(\CC^N))$ respectively. Hence Theorem \ref{Proposition 4} recovers the classical Kapranov exceptional collection.


\subsection{Proof of the main theorem}

We prove Theorem \ref{Proposition 4} in this section. Since the result is about $\Hom$ space, the idea of the proof is taking the right adjoint of ${\F}_{1,\blam(1)}{\F}_{2,\blam(2)} ... {\F}_{n-1, \blam(n-1)} \bo_{\eta}$ and calculate compositions with other functors.  More explicitly, we calculate 
\begin{align} \label{adcom}
	\begin{split}
	&({\F}_{1,\blam(1)}{\F}_{2,\blam(2)} ... {\F}_{n-1, \blam(n-1)})^{R}{\F}_{1,\blam(1)'}{\F}_{2,\blam(2)'} ... {\F}_{n-1, \blam(n-1)'} \bo_{\eta}  \\
	& \cong ({\F}_{n-1, \blam(n-1)})^{R}...({\F}_{2,\blam(2)})^{R}({\F}_{1,\blam(1)})^{R}{\F}_{1,\blam(1)'}{\F}_{2,\blam(2)'} ... {\F}_{n-1, \blam(n-1)'} \bo_{\eta}
	\end{split}
\end{align} for all $\blam(i), \ \blam(i)'  \in P(k_{i+1},\Bbbk_{i})$, $1 \leq i \leq n-1$. We show that $(\ref{adcom}) \cong \bo_{\eta}$ if $(\blam(1),...,\blam(n-1))=(\blam(1)',...,\blam(n-1)')$, and $(\ref{adcom}) \cong 0$ if $(\blam(1),...,\blam(n-1)) <_{pl} (\blam(1)',...,\blam(n-1)')$. Then the result follows directly.

The first step we have to calculate is the composition $({\F}_{1,\blam(1)})^{R}{\F}_{1,\blam(1)'}\bo_{(0,k_{1}+k_{2},k_{3},...,k_{n})}$ where $\blam(1), \blam(1)' \in P(k_{2},k_{1})$. Note that the two functors ${\F}_{1,\blam(1)}$, ${\F}_{1,\blam(1)'}$ have the same sub-index, so the calculation only depends on the first two terms in the weight $(0,k_{1}+k_{2},k_{3},...,k_{n})$. Thus it suffices to prove Theorem \ref{Proposition 4} for the $\SL_{2}$ version, which is the following.

\begin{theorem} \label{Proposition 3}
Given a partial categorical $\dot{\Uu}_{0,N}(L\SL_2)$ action $\Kk$. Considering the functors 
\begin{equation*}
	{\F}_{\blam}\bo_{(0,N)}:={\F}_{\lambda_{1}}{\F}_{\lambda_{2}} ... {\F}_{\lambda_{k}} \bo_{(0,N)} \in \Hom(\Kk(0,N),\Kk(k,N-k))	
\end{equation*} where $\blam=(\lambda_{1},...,\lambda_{k}) \in P(N-k,k)$. Then $\{{\F}_{\blam}\bo_{(0,N)}\}_{\blam \in P(N-k,k)}$ satisfy the following properties
\begin{enumerate}
\item  $\Hom({\F}_{\blam}\bo_{(0,N)},{\F}_{\blam}\bo_{(0,N)}) \cong \Hom(\bo_{(0,N)},\bo_{(0,N)})$,
\item $\Hom({\F}_{\blam}\bo_{(0,N)},{\F}_{\blam'}\bo_{(0,N)}) \cong 0$ 
if $\blam=(\lambda_{1},...,\lambda_{k})<_{l} \blam'=(\lambda'_{1},...,\lambda'_{k})$.
\end{enumerate}
\end{theorem}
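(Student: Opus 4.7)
The plan is to reduce both claims to the computation of the endofunctor $\F_\blam^R \F_{\blam'} \bo_{(0,N)}$ on $\Kk(0,N)$, and to show
\begin{equation*}
\F_\blam^R \F_{\blam'} \bo_{(0,N)} \cong \begin{cases} \bo_{(0,N)} & \text{if } \blam = \blam', \\ 0 & \text{if } \blam <_l \blam'. \end{cases}
\end{equation*}
Both parts of the theorem then follow from the adjunction isomorphism $\Hom(\F_\blam \bo_{(0,N)}, \F_{\blam'} \bo_{(0,N)}) \cong \Hom(\bo_{(0,N)}, \F_\blam^R \F_{\blam'} \bo_{(0,N)})$. The standard identity $(F \circ G)^R \cong G^R \circ F^R$ unpacks the endofunctor as $\F_{\lambda_k}^R \cdots \F_{\lambda_2}^R \F_{\lambda_1}^R \F_{\lambda'_1} \F_{\lambda'_2} \cdots \F_{\lambda'_k} \bo_{(0,N)}$, and the underlying strategy is to move the adjoint $\E$-type factors to the right until they meet $\bo_{(0,N)}$, where they vanish because $(0,N)$ is the highest weight: since $(0,N)+\alpha_1 = (-1,N+1)$ lies outside $C(2,N)$, we have $\bo_{(-1,N+1)}=0$ and hence $\E_r \bo_{(0,N)} = 0$ for every $r$.

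I would proceed by induction on $k$. For the base case $k=1$, apply the right-adjoint formula (condition (4)(b) of Definition \ref{definition 2}) at source weight $(0,N)$. With $k_1 = 0$, the adjoint $\F_\lambda^R$ becomes $\E_{-2}$ composed with $\spi^-$-twists and a homological shift $[\lambda-1]$. Push the $\spi^-$'s through $\F_{\lambda'}$ via (5)(b) (each move decrements the $\F$-index by one and adds $[+1]$), and then invoke (6)(a)--(c) on the resulting $\E_{-2}\F_{\lambda'+c}\bo_{(0,N)}$. Because $\E_r \bo_{(0,N)} = 0$, only a $\spi^\pm$-residue from a boundary triangle (6)(a) or (6)(b) can survive; the boundary conditions $r + s \in \{N, 0\}$ fire according to explicit arithmetic in $\lambda$ and $\lambda'$, producing $\bo_{(0,N)}$ in the diagonal case and $0$ (once the accumulated shifts collapse) when $\lambda < \lambda'$.

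For the inductive step, focus on the innermost product $\F_{\lambda_1}^R \F_{\lambda'_1}$ inside $\Kk(k,N-k)$. Rewriting $\F_{\lambda_1}^R$ as $\E_r$ (for $r = -k-1$, modulo $\spi^-$-twists and shifts from (4)(b) and (5)(b)) and invoking (6), two scenarios arise. If $\lambda_1 < \lambda'_1$, a case-by-case tracking shows that the $\E_r$ factor (together with any $\spi^\pm$-residues from boundary triangles) can be pushed through $\F_{\lambda'_2}, \ldots, \F_{\lambda'_k}$ using further applications of (6), ultimately annihilating at $\bo_{(0,N)}$; the whole composite thus vanishes, giving the semiorthogonal case. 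If $\lambda_1 = \lambda'_1$, the relevant boundary triangle produces a $\spi^\pm$-residue which, after being commuted downwards via (5) and absorbed together with the outer $\F_{\lambda_k}^R \cdots \F_{\lambda_2}^R$, reduces (up to shifts) to the smaller problem $\F_{(\lambda_2,\ldots,\lambda_k)}^R \F_{(\lambda'_2,\ldots,\lambda'_k)} \bo_{(0,N)}$, to which the inductive hypothesis applies.

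The main obstacle I anticipate is the simultaneous bookkeeping of $\spi^\pm$-twists, homological shifts, and index changes. Each application of (5)(a)/(5)(b) shifts an $\E$- or $\F$-index by one and introduces a homological shift; each triangle (6)(a)/(6)(b) creates a $\spi^\pm$-residue whose propagation through the remaining $\F$'s alters their indices. Verifying that the accumulated shifts cancel to give $\bo_{(0,N)}$ (diagonal) or $0$ (off-diagonal), and that the boundary conditions $r+s \in \{k_{i+1},-k_i\}$ are met at precisely the right moments, is where the bulk of the technical work lies. Packaging this bookkeeping cleanly---say via an invariant on the pair $(\blam,\blam')$ that interacts well with the lexicographic order---is likely the key step in turning this plan into a rigorous proof.
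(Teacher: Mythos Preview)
Your overall strategy is the same as the paper's: reduce to computing $({\F}_{\blam})^{R}{\F}_{\blam'}\bo_{(0,N)}$, expand the right adjoint via (4)(b) and (5)(b), and exploit the highest-weight vanishing ${\E}_{r}\bo_{(0,N)}=0$. The inductive organization you propose mirrors the paper's iterative simplification (the paper writes out the full adjoint at once in a lemma and then peels off one ${\E}/{\F}$ pair at a time, which is exactly your induction unwound).

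There is, however, a genuine gap in your treatment of the case $\lambda_1=\lambda'_1$. You say the boundary triangle (6)(b) ``produces a $\spi^{-}$-residue'' which reduces to the smaller problem; but the triangle has \emph{three} terms,
\[
{\E}_{-k-1}{\F}_{2}\bo_{(k-1,N-k+1)} \;\to\; {\F}_{2}{\E}_{-k-1}\bo_{(k-1,N-k+1)} \;\to\; \spi^{-}\bo_{(k-1,N-k+1)},
\]
and to identify ${\E}_{-k-1}{\F}_{2}$ with $\spi^{-}[-1]$ you must first show that the middle term ${\F}_{2}{\E}_{-k-1}$, once composed on the left with the remaining adjoints $({\F}_{\lambda_{k}})^{R}\cdots({\F}_{\lambda_{2}})^{R}$ and on the right with ${\F}_{\lambda'_{2}}\cdots{\F}_{\lambda'_{k}}\bo_{(0,N)}$, vanishes. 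This is not automatic: the offending ${\E}_{-k-1}$ sits at weight $(k-1,N-k+1)$, not at $(0,N)$, and the extra ${\F}_{2}$ now lives amongst the string of adjoint ${\E}$'s. The paper isolates precisely this vanishing as its key technical lemma (showing that a lone ${\F}_{2}$ inserted into the chain of ${\E}$'s can be commuted outward via repeated applications of (6)(c) until the innermost ${\E}_{-2}$ hits $\bo_{(0,N)}$), and verifying the index inequalities $-k_{1}+1\le r+s\le k_{2}-1$ at every step is where the Young-diagram condition $\blam\in P(N-k,k)$ is actually used. Your plan needs an explicit argument here; the bookkeeping you flag as the ``main obstacle'' is indeed the substance of the proof, not merely a nuisance.

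A secondary remark: in the case $\lambda_1<\lambda'_1$ you propose pushing the ${\E}$ \emph{rightward} through ${\F}_{\lambda'_2},\ldots,{\F}_{\lambda'_k}$, whereas the paper pushes the ${\F}$ \emph{leftward} through the remaining ${\E}$'s of the adjoint. Both directions can be made to work, but the leftward route is cleaner because the indices of the ${\E}$'s in the adjoint are fixed and explicit, so the (6)(c) range check is uniform; pushing rightward forces you to track how each intervening $(\spi^{-})^{a}$ shifts the ${\E}$-index before it meets the next ${\F}_{\lambda'_j}$.
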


We dedicate the rest of this section to the proof of Theorem \ref{Proposition 3}. 

\begin{proof}[Proof of Theorem \ref{Proposition 3}]
In this proof, we will keep using the categorical relations in Definition \ref{definition 2}, in particular, relation (4)(b), relation (5)(b) and relation (6)(b)(c).

We prove the first property. Given a functor ${\F}_{\blam}\bo_{(0,N)}:={\F}_{\lambda_{1}}{\F}_{\lambda_{2}} ... {\F}_{\lambda_{k}} \bo_{(0,N)} \in \Hom(\Kk(0,N),\Kk(k,N-k))$ where $\blam=(\lambda_{1},...,\lambda_{k}) \in P(N-k,k)$. Applying the adjunction to get 
\begin{equation*}				  \Hom({\F}_{\blam}\bo_{(0,N)},{\F}_{\blam}\bo_{(0,N)})=\Hom(\bo_{(0,N)},({\F}_{\blam}\bo_{(0,N)})^{R}{\F}_{\blam}\bo_{(0,N)}).
\end{equation*}

So we have to know the right adjoint functor first. We formulate it in the following lemma where the proof is simply applying relation (4)(b) in Definition \ref{definition 2} and we leave it to the reader.

\begin{lemma} \label{ra}
 The right adjoint is given by	
	\begin{equation*}
	({\F}_{\blam}\bo_{(0,N)})^{R} =({\spi^{-}})^{-\lambda_{k}+1}{\E}_{-2}({\spi^{-}})^{\lambda_{k}-\lambda_{k-1}-1}{\E}_{-3}({\spi^{-}})^{\lambda_{k-1}-\lambda_{k-2}-1}...{\E}_{-k-1}({\spi^{-}})^{\lambda_{1}-2}\bo_{(k,N-k)}[\sum_{i=1}^{k}\lambda_{i}-k].
	\end{equation*}
\end{lemma}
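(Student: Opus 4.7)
The plan is to apply $(GF)^R \cong F^R G^R$ iteratively and then invoke the right-adjoint formula of condition (4)(b) of Definition~\ref{definition 2} on each factor separately. First, I would reverse the composition to write
\begin{equation*}
({\F}_{\lambda_1}{\F}_{\lambda_2}\cdots {\F}_{\lambda_k}\bo_{(0,N)})^R \cong {\F}_{\lambda_k}^R {\F}_{\lambda_{k-1}}^R \cdots {\F}_{\lambda_1}^R,
\end{equation*}
with each $\F^R$ taken with the correct identity $\bo_{\kk}$ built in. Because each $\F$ sends a weight $(k_1,k_2)$ to $(k_1+1,k_2-1)$ and the product begins at weight $(0,N)$, the factor $\F_{\lambda_j}$ in the original product acts on weight $(k-j,N-k+j)$ and outputs on $(k-j+1,N-k+j-1)$.

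Second, for each $1 \leq j \leq k$, I would apply (4)(b) with $k_1=k-j$ and $s=\lambda_j$, obtaining
\begin{equation*}
({\F}_{\lambda_j}\bo_{(k-j,N-k+j)})^R \cong (\spi^-)^{-\lambda_j+1}{\E}_{-(k-j)-2}(\spi^-)^{\lambda_j-2}\bo_{(k-j+1,N-k+j-1)}[\lambda_j-1].
\end{equation*}
Substituting these into the reversed product yields an alternating string of $\spi^-$-powers and $\E$'s whose $\E$-indices, read left to right, are $-2,-3,\dots,-k-1$. The key simplification is that between two consecutive blocks $\F_{\lambda_p}^R$ and $\F_{\lambda_{p-1}}^R$ the trailing $(\spi^-)^{\lambda_p-2}$ meets the leading $(\spi^-)^{-\lambda_{p-1}+1}$ and merges into $(\spi^-)^{\lambda_p - \lambda_{p-1} - 1}$, while only the outermost $(\spi^-)^{-\lambda_k+1}$ and $(\spi^-)^{\lambda_1-2}$ remain unpaired. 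Summing the homological shifts contributes $\sum_{j=1}^k(\lambda_j-1) = \sum_j \lambda_j - k$. Putting everything together reproduces the formula claimed in the lemma.

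Since the argument is essentially bookkeeping, no deeper ideas are required; the main hazard is keeping the indexing straight. Reversing the composition moves $\F_{\lambda_k}^R$ to the far left, so its $\E$-index is $-2$ rather than $-k-1$, and $\F_{\lambda_1}^R$ moves to the far right with $\E$-index $-k-1$. A reader following the symbols without drawing the weight diagram can easily flip the $\E$-indices or the signs of the $\spi^-$-exponents, so I would verify the formula in the cases $k=1,2$ before writing down the general expression.
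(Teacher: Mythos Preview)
Your argument is correct and is exactly the approach the paper intends: the paper merely says the proof ``is simply applying relation (4)(b) in Definition \ref{definition 2} and we leave it to the reader,'' and what you have written is precisely that bookkeeping carried out in full. Your identification of the weights, the $\E$-indices $-2,-3,\dots,-k-1$, the merging of adjacent $\spi^-$-powers into $(\spi^-)^{\lambda_p-\lambda_{p-1}-1}$, and the total shift $\sum_j\lambda_j-k$ are all correct.
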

 
	
By Lemma \ref{ra}, to calculate $({\F}_{\lambda}\bo_{(0,N)})^{R}{\F}_{\lambda}\bo_{(0,N)}$, we need to simplify 
\begin{equation} \label{eq 5}
({\spi^{-}})^{-\lambda_{k}+1}{\E}_{-2}({\spi^{-}})^{\lambda_{k}-\lambda_{k-1}-1}{\E}_{-3}({\spi^{-}})^{\lambda_{k-1}-\lambda_{k-2}-1}...{\E}_{-k-1}({\spi^{-}})^{\lambda_{1}-2}{\F}_{\lambda_{1}}{\F}_{\lambda_{2}} ... {\F}_{\lambda_{k}} \bo_{(0,N)}[\sum_{i=1}^{k}\lambda_{i}-k].
\end{equation}

We will keep using the following lemma to simplify (\ref{eq 5}).

\begin{lemma} \label{simplem}
We have 
\begin{equation*}
({\spi^{-}})^{-\lambda_{k}+1}{\E}_{-2}({\spi^{-}})^{\lambda_{k}-\lambda_{k-1}-1}{\E}_{-3}...{\E}_{-k+i-2}({\spi^{-}})^{\lambda_{i}-\lambda_{i-1}-1}{\F}_{2}\bo_{(k-i,N-k+i)} \cong 0
\end{equation*}	for all $2 \leq i \leq k$.
\end{lemma}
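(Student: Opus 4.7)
The plan rests on a weight-counting observation: the expression in the lemma contains exactly $k-i+1$ copies of $\E_{-m}$. If one can commute the rightmost $\F_{2}$ leftward past every $(\spi^{-})^{b}$ and every $\E_{-m}$ in the chain, the resulting $k-i+1$ copies of $\E$ will act directly on $\bo_{(k-i,N-k+i)}$. Since each $\E$ shifts the weight by $\alpha_{1}=(-1,1)$, after $k-i+1$ applications the target weight is $(-1,N+1)$, which lies outside the weight lattice. By condition (2) of Definition \ref{definition 2}, $\bo_{(-1,N+1)}=0$, so the entire composition vanishes.

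To implement this I alternately use two moves. Relation (5)(b) gives $(\spi^{-})^{b}\F_{s}\bo_{\kk}\cong \F_{s-b}(\spi^{-})^{b}\bo_{\kk}[-b]$, which pushes $\F$ past each $(\spi^{-})^{b}$, shifting its subscript and producing a homological shift. Relation (6)(c) gives $\F_{s'}\E_{-m}\bo_{\kk}\cong \E_{-m}\F_{s'}\bo_{\kk}$ whenever $-k_{1}+1\le -m+s'\le k_{2}-1$ at the relevant weight. A direct bookkeeping of the subscripts produced by these moves, using $\lambda_{j-1}\ge\lambda_{j}\ge 0$ and the bound $\lambda_{j}\le N-k$ coming from $\blam\in P(N-k,k)$, confirms that the isomorphism condition holds in the generic case, and that the rewritten $\E_{-m}$ and $\F_{s'}$ remain in the valid subscript ranges prescribed by Definition \ref{definition 2}. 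A parallel application of (5)(a) is used to slide $\E_{-m}$'s past $(\spi^{-})^{b}$'s when needed to keep the order of factors workable.

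The main obstacle is the boundary case $-m+s'=k_{2}$, in which relation (6)(a) replaces the isomorphism by an exact triangle $\F_{s'}\E_{-m}\bo_{\kk}\to \E_{-m}\F_{s'}\bo_{\kk}\to \spi^{+}\bo_{\kk}$, producing an extra $\spi^{+}$ summand that must be analyzed separately. I plan to resolve this by induction on $k-i$. The base case $i=k$ is the one-$\E$ expression $(\spi^{-})^{-\lambda_{k}+1}\E_{-2}(\spi^{-})^{\lambda_{k}-\lambda_{k-1}-1}\F_{2}\bo_{(0,N)}$, which reduces via (5)(b) and (6)(c) to a multiple of $\E_{-2}\bo_{(0,N)}$, and this vanishes because $\bo_{(-1,N+1)}=0$ by condition (2). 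For the inductive step, the $\spi^{+}$ summand coming from a boundary commutation carries a truncated chain of $\E$'s whose length still exceeds what is needed to push the weight off the lattice, so either the inductive hypothesis at a smaller $i$ or the weight-count argument of the first paragraph kills it. Combining the generic isomorphism cases with the triangle contributions then yields the claimed vanishing.
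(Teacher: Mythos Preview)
Your approach is essentially the paper's: push $\F_{2}$ leftward through the chain using (5)(b) and (6)(c), then observe that the surviving string of $\E$'s drives the weight to $(-1,N+1)$, where $\bo$ vanishes. The paper carries this out directly, without induction.

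The extra machinery you propose is unnecessary because the boundary case $r+s=k_{2}$ of relation (6)(a) never actually occurs. At the step where you commute $\E_{-k+i+j-3}$ past $\F_{\lambda_{i-1}-\lambda_{i+j-2}+j+1}$ at weight $(k-i-j+1,\,N-k+i+j-1)$, you have
\[
r+s=\lambda_{i-1}-\lambda_{i+j-2}-k+i+2j-2 \;\le\; (N-k)-k+i+2j-2,
\]
while $k_{2}-1=N-k+i+j-2$. The inequality $r+s\le k_{2}-1$ reduces to $k\ge j$, and this is forced because the weight at that step has first coordinate $k-i-j+1\ge 0$, i.e.\ $k\ge i+j-1\ge j+1>j$. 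So (6)(c) applies at every single step, the triangle of (6)(a) never enters, and there is no $\spi^{+}$ term to analyze. Your ``generic case'' is in fact the only case; the bookkeeping you describe already finishes the proof once done in full.

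Two smaller points: you do not need relation (5)(a) at all, since only $\F$ moves and the $\E$'s and $(\spi^{-})$'s stay put; and the induction on $k-i$ can be dropped entirely.
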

	
\begin{proof}
By relation (5)(b), we have 
\begin{equation*}
	{\E}_{-k+i-2}({\spi^{-}})^{\lambda_{i}-\lambda_{i-1}-1}{\F}_{2}\bo_{(k-i,N-k+i)} \cong {\E}_{-k+i-2}{\F}_{\lambda_{i-1}-\lambda_{i}+3}({\spi^{-}})^{\lambda_{i}-\lambda_{i-1}-1}\bo_{(k-i,N-k+i)}[-\lambda_{i}+\lambda_{i-1}+1] 
\end{equation*}

Since $\blam=(\lambda_{1},...,\lambda_{k}) \in P(N-k,k)$, we have $0 \leq \lambda_{i-1}-\lambda_{i} \leq N-k$ and $k-i \geq 0$  also implies that $2 \leq k$. Thus we obtain 
\begin{equation*}
-k+i+1 \leq \lambda_{i-1}-\lambda_{i}-k+i+1 \leq N-2k+i+1 \leq N-k+i-1.
\end{equation*} 

By relation (6)(c), we get 
	\begin{align*}
	&({\spi^{-}})^{-\lambda_{k}+1}{\E}_{-2}({\spi^{-}})^{\lambda_{k}-\lambda_{k-1}-1}{\E}_{-3}...{\E}_{-k+i-2}({\spi^{-}})^{\lambda_{i}-\lambda_{i-1}-1}{\F}_{2}\bo_{(k-i,N-k+i)}  \\
	&\cong ({\spi^{-}})^{-\lambda_{k}+1}{\E}_{-2}({\spi^{-}})^{\lambda_{k}-\lambda_{k-1}-1}{\E}_{-3}...{\F}_{\lambda_{i-1}-\lambda_{i}+3}{\E}_{-k+i-2}({\spi^{-}})^{\lambda_{i}-\lambda_{i-1}-1}\bo_{(k-i,N-k+i)}[-\lambda_{i}+\lambda_{i-1}+1].
\end{align*}

The next term we have to calculate is ${\E}_{-k+i-1}({\spi^{-}})^{\lambda_{i+1}-\lambda_{i}-1}{\F}_{\lambda_{i-1}-\lambda_{i}+3}\bo_{(k-i-1,N-k+i+1)}$ and similarly 
\begin{align*}
&{\E}_{-k+i-1}({\spi^{-}})^{\lambda_{i+1}-\lambda_{i}-1}{\F}_{\lambda_{i-1}-\lambda_{i}+3}\bo_{(k-i-1,N-k+i+1)} \\
&\cong {\E}_{-k+i-1}{\F}_{\lambda_{i-1}-\lambda_{i+1}+4}({\spi^{-}})^{\lambda_{i+1}-\lambda_{i}-1}\bo_{(k-i-1,N-k+i+1)}[-\lambda_{i+1}+\lambda_{i}+1].
\end{align*} Again, we use $0 \leq \lambda_{i-1}-\lambda_{i+1} \leq N-k$ and $k-i-1 \geq 0$ implies $k \geq 3$. Thus we get
\begin{equation*}
	-k+i+3 \leq \lambda_{i-1}-\lambda_{i+1}-k+i+3 \leq N-2k+i+3 \leq N-k+i,
\end{equation*} and by relation (6)(c) again we get
\begin{equation*}
{\E}_{-k+i-1}{\F}_{\lambda_{i-1}-\lambda_{i+1}+4}\bo_{(k-i-1,N-k+i+1)} \cong {\F}_{\lambda_{i-1}-\lambda_{i+1}+4}{\E}_{-k+i-1}\bo_{(k-i-1,N-k+i+1)}.	
\end{equation*} 

Continuing this process and ignoring the homological shifts, we have 
\begin{align*}
	&({\spi^{-}})^{-\lambda_{k}+1}{\E}_{-2}({\spi^{-}})^{\lambda_{k}-\lambda_{k-1}-1}{\E}_{-3}...{\E}_{-k+i-2}({\spi^{-}})^{\lambda_{i}-\lambda_{i-1}-1}{\F}_{2}\bo_{(k-i,N-k+i)} \\
	& \cong ({\spi^{-}})^{-\lambda_{k}+1}{\E}_{-2}({\spi^{-}})^{\lambda_{k}-\lambda_{k-1}-1}{\E}_{-3}...{\F}_{\lambda_{i-1}-\lambda_{i}+3}{\E}_{-k+i-2}({\spi^{-}})^{\lambda_{i}-\lambda_{i-1}-1}\bo_{(k-i,N-k+i)} \\
	&\cong ({\spi^{-}})^{-\lambda_{k}+1}{\E}_{-2}({\spi^{-}})^{\lambda_{k}-\lambda_{k-1}-1}{\E}_{-3}... {\F}_{\lambda_{i-1}-\lambda_{i+1}+4}{\E}_{-k+i-1}({\spi^{-}})^{\lambda_{i+1}-\lambda_{i}-1}{\E}_{-k+i-2}({\spi^{-}})^{\lambda_{i}-\lambda_{i-1}-1}\bo_{(k-i,N-k+i)} \\
	& \cong ... \\
	& \cong ({\spi^{-}})^{-\lambda_{k}+1}{\E}_{-2}({\spi^{-}})^{\lambda_{k}-\lambda_{k-1}-1}{\F}_{\lambda_{i-1}-\lambda_{k-1}+k-i+2}{\E}_{-3}({\spi^{-}})^{\lambda_{k-1}-\lambda_{k-2}-1}... {\E}_{-k+i-2}({\spi^{-}})^{\lambda_{i}-\lambda_{i-1}-1}\bo_{(k-i,N-k+i)} \\
	& \cong ({\spi^{-}})^{-\lambda_{k}+1}{\E}_{-2}{\F}_{\lambda_{i-1}-\lambda_{k}+k-i+3}({\spi^{-}})^{\lambda_{k}-\lambda_{k-1}-1}{\E}_{-3}({\spi^{-}})^{\lambda_{k-1}-\lambda_{k-2}-1}... {\E}_{-k+i-2}({\spi^{-}})^{\lambda_{i}-\lambda_{i-1}-1}\bo_{(k-i,N-k+i)}.
\end{align*} To see the commutativity of the rightmost term  ${\E}_{-2}{\F}_{\lambda_{i-1}-\lambda_{k}+k-i+3}\bo_{(0,N)}$, since $0\leq \lambda_{i-1}-\lambda_{k} \leq N-k$ and $k \geq i \geq  2$, we have $1 \leq k-i+1 \leq \lambda_{1}-\lambda_{k}+k-i+1 \leq N-i+1 \leq N-1$. Thus ${\E}_{-2}{\F}_{\lambda_{i-1}-\lambda_{k}+k-i+3}\bo_{(0,N)} \cong {\F}_{\lambda_{1}-\lambda_{k}+k-i+3}{\E}_{-2}\bo_{(0,N)} \cong 0$ and we prove the result.

\end{proof}	
	
The first term we have to calculate is  ${\E}_{-k-1}({\spi^{-}})^{\lambda_{1}-2}{\F}_{\lambda_{1}}\bo_{(k-1,N-k+1)}$, we get 
	\begin{equation*}
		{\E}_{-k-1}({\spi^{-}})^{\lambda_{1}-2}{\F}_{\lambda_{1}}\bo_{(k-1,N-k+1)}={\E}_{-k-1}{\F}_{2}({\spi^{-}})^{\lambda_{1}-2}\bo_{(k-1,N-k+1)}[-\lambda_{1}+2]
	\end{equation*} and  we have the following exact triangle
	\begin{equation*}
		{\E}_{-k-1}{\F}_{2}\bo_{(k-1,N-k+1)} \rightarrow {\F}_{2}{\E}_{-k-1}\bo_{(k-1,N-k+1)}  \rightarrow {\spi}^{-}\bo_{(k-1,N-k+1)}.
	\end{equation*}
	
	Thus to know (\ref{eq 5}) it suffices to know the following two terms.
	\begin{equation} \label{eq 6}
		({\spi^{-}})^{-\lambda_{k}+1}{\E}_{-2}({\spi^{-}})^{\lambda_{k}-\lambda_{k-1}-1}{\E}_{-3}...({\spi^{-}})^{\lambda_{2}-\lambda_{1}-1}{\F}_{2}{\E}_{-k-1}({\spi^{-}})^{\lambda_{1}-2}{\F}_{\lambda_{2}} ... {\F}_{\lambda_{k}} \bo_{(0,N)}[\sum_{i=2}^{k}\lambda_{i}-k+2].
	\end{equation} and 
	\begin{equation} \label{eq 7}
		({\spi^{-}})^{-\lambda_{k}+1}{\E}_{-2}({\spi^{-}})^{\lambda_{k}-\lambda_{k-1}-1}{\E}_{-3}...{\E}_{-k}({\spi^{-}})^{\lambda_{2}-2}{\F}_{\lambda_{2}} ... {\F}_{\lambda_{k}} \bo_{(0,N)}[\sum_{i=2}^{k}\lambda_{i}-k+2].
	\end{equation}
	
By Lemma \ref{simplem}, (\ref{eq 6})=0 and thus 
\begin{equation} \label{eq 8}
(\ref{eq 5})=({\spi^{-}})^{-\lambda_{k}+1}{\E}_{-2}({\spi^{-}})^{\lambda_{k}-\lambda_{k-1}-1}{\E}_{-3}...{\E}_{-k}({\spi^{-}})^{\lambda_{2}-2}{\F}_{\lambda_{2}} ... {\F}_{\lambda_{k}} \bo_{(0,N)}[\sum_{i=2}^{k}\lambda_{i}-k+1].
\end{equation}

	The next step is to keep simplifying (\ref{eq 8}). Again we have \begin{equation*}
		{\E}_{-k}({\spi^{-}})^{\lambda_{2}-2}{\F}_{\lambda_{2}}\bo_{(k-2,N-k+2)} \cong {\E}_{-k}{\F}_{2}({\spi^{-}})^{\lambda_{2}-2}\bo_{(k-2,N-k+2)}[-\lambda_{2}+2],
	\end{equation*} and the following exact triangle
	\begin{equation*}
		{\E}_{-k}{\F}_{2}\bo_{(k-2,N-k+2)} \rightarrow {\F}_{2}{\E}_{-k}\bo_{(k-2,N-k+2)}  \rightarrow {\spi}^{-}\bo_{(k-2,N-k+2)}.
	\end{equation*}
	
	So to know (\ref{eq 8}), it suffices to know the following two terms 
	\begin{equation} \label{eq 9}
		({\spi^{-}})^{-\lambda_{k}+1}{\E}_{-2}({\spi^{-}})^{\lambda_{k}-\lambda_{k-1}-1}{\E}_{-3}...({\spi^{-}})^{\lambda_{3}-\lambda_{2}-1}{\F}_{2}{\E}_{-k}({\spi^{-}})^{\lambda_{2}-2}{\F}_{\lambda_{3}} ... {\F}_{\lambda_{k}} \bo_{(0,N)}[\sum_{i=3}^{k}\lambda_{i}-k+3] 
	\end{equation} and 
	\begin{equation} \label{eq 10}
		({\spi^{-}})^{-\lambda_{k}+1}{\E}_{-2}({\spi^{-}})^{\lambda_{k}-\lambda_{k-1}-1}{\E}_{-3}...{\E}_{-k+1}({\spi^{-}})^{\lambda_{3}-2}{\F}_{\lambda_{3}} ... {\F}_{\lambda_{k}} \bo_{(0,N)}[\sum_{i=3}^{k}\lambda_{i}-k+3].
	\end{equation}
	
	Using Lemma \ref{simplem} again, (\ref{eq 9})=0 and thus 
	\begin{equation} \label{eq 11}
		(\ref{eq 8}) \cong  ({\spi^{-}})^{-\lambda_{k}+1}{\E}_{-2}({\spi^{-}})^{\lambda_{k}-\lambda_{k-1}-1}{\E}_{-3}...{\E}_{-k+1}({\spi^{-}})^{\lambda_{3}-2}{\F}_{\lambda_{3}} ... {\F}_{\lambda_{k}} \bo_{(0,N)}[\sum_{i=3}^{k}\lambda_{i}-k+2].
	\end{equation} 

	Continuing this process we have the following 
	\begin{align*}
		&(\ref{eq 5}) \cong (\ref{eq 8}) \cong (\ref{eq 11}) \\
		& \cong ({\spi^{-}})^{-\lambda_{k}+1}{\E}_{-2}({\spi^{-}})^{\lambda_{k}-\lambda_{k-1}-1}{\E}_{-3}...{\E}_{-k+2}({\spi^{-}})^{\lambda_{4}-2}{\F}_{\lambda_{4}} ... {\F}_{\lambda_{k}} \bo_{(0,N)}[\sum_{i=4}^{k}\lambda_{i}-k+3] \\
		& \cong ... \\
		& \cong ({\spi^{-}})^{-\lambda_{k}+1}{\E}_{-2}({\spi^{-}})^{\lambda_{k}-2} {\F}_{\lambda_{k}} \bo_{(0,N)}[\lambda_{k}-1]  \cong ({\spi^{-}})^{-\lambda_{k}+1}{\E}_{-2}{\F}_{2} ({\spi^{-}})^{\lambda_{k}-2} \bo_{(0,N)}[1].
	\end{align*}
	
	Since that ${\E}_{-2}{\F}_{2}\bo_{(0,N)} \cong {\spi}^{-}[-1]$, we get 
	\begin{equation*}
		({\spi^{-}})^{-\lambda_{k}+1}{\E}_{-2}{\F}_{2} ({\spi^{-}})^{\lambda_{k}-2} \bo_{(0,N)}[1] \cong ({\spi^{-}})^{-\lambda_{k}+1} {\spi}^{-}[-1]({\spi^{-}})^{\lambda_{k}-2} \bo_{(0,N)}[1] \cong \bo_{(0,N)}.
	\end{equation*}
	
	The above argument shows that $({\F}_{\blam}\bo_{(0,N)})^{R}{\F}_{\blam}\bo_{(0,N)} \cong \bo_{(0,N)}$ where $\blam \in P(N-k,k)$, and thus implies the property (1).
	
	Next, we prove the second property. Given $\blam=(\lambda_{1},...,\lambda_{k}), \ \blam'=(\lambda'_{1},...,\lambda'_{k}) \in P(N-k,k)$. We assume that $\blam <_{l} \blam'$; i.e., $ \lambda_{a} \leq \lambda'_{a}$ for all $1 \leq a \leq n$ and $\lambda_{j}<\lambda'_{j}$ for some $1 \leq j \leq k$. We define 
	\begin{equation*}
		i=\text{min}\{ 1 \leq j \leq k \ | \ \lambda_{j} < \lambda'_{j}\},	
	\end{equation*} then we have $\lambda_{a}=\lambda'_{a}$ for all $1 \leq a \leq i-1$.
	
	To prove that $\Hom({\F}_{\blam}\bo_{(0,N)},{\F}_{\blam'}\bo_{(0,N)}) \cong 0$, the idea is still apply the right adjunction first to get $\Hom(\bo_{(0,N)},({\F}_{\blam}\bo_{(0,N)})^{R}{\F}_{\blam'}\bo_{(0,N)})$ and show that $({\F}_{\blam}\bo_{(0,N)})^{R}{\F}_{\blam'}\bo_{(0,N)} \cong 0$.
	
	Like the proof of property (1), we have to simplify 
	\begin{equation} \label{eq 12}
		({\spi^{-}})^{-\lambda_{k}+1}{\E}_{-2}({\spi^{-}})^{\lambda_{k}-\lambda_{k-1}-1}{\E}_{-3}({\spi^{-}})^{\lambda_{k-1}-\lambda_{k-2}-1}...{\E}_{-k-1}({\spi^{-}})^{\lambda_{1}-2}{\F}_{\lambda'_{1}}{\F}_{\lambda'_{2}} ... {\F}_{\lambda'_{k}} \bo_{(0,N)}[\sum_{i=1}^{k}\lambda_{i}-k].
	\end{equation}
	
In the rest of the proof, the homological shifts will not affect the result. So we will ignore the homological shifts for simplification. Since $\lambda_{j}=\lambda'_{j}$ for all $1 \leq j \leq i-1$, using the arguments in the proof of property (1) we obtain 
\begin{align}
	\begin{split} \label{eq 20}
	(\ref{eq 12})&\cong ({\spi^{-}})^{-\lambda_{k}+1}{\E}_{-2}({\spi^{-}})^{\lambda_{k}-\lambda_{k-1}-1}{\E}_{-3}...{\E}_{-k}({\spi^{-}})^{\lambda_{2}-2}{\F}_{\lambda'_{2}} ... {\F}_{\lambda'_{k}} \bo_{(0,N)} \\
	&\cong ... \\
	&\cong  ({\spi^{-}})^{-\lambda_{k}+1}{\E}_{-2}({\spi^{-}})^{\lambda_{k}-\lambda_{k-1}-1}{\E}_{-3}...{\E}_{-k-2+i}({\spi^{-}})^{\lambda_{i}-2}{\F}_{\lambda'_{i}} ... {\F}_{\lambda'_{k}} \bo_{(0,N)}.
	\end{split}
\end{align}
	
We need the following lemma which is similar to Lemma \ref{simplem}.

\begin{lemma}
We show that 
\begin{equation*}
({\spi^{-}})^{-\lambda_{k}+1}{\E}_{-2}({\spi^{-}})^{\lambda_{k}-\lambda_{k-1}-1}{\E}_{-3}...{\E}_{-k-2+i}({\spi^{-}})^{\lambda_{i}-2}{\F}_{\lambda'_{i}}\bo_{(k-i,N-k+i)} \cong 0
\end{equation*} for all $\blam=(\lambda_{1},...,\lambda_{k}), \ \blam'=(\lambda'_{1},...,\lambda'_{k}) \in P(N-k,k)$ with $\lambda_{i}<\lambda'_{i}$.
\end{lemma}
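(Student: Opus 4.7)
The plan is to mimic the proof of Lemma \ref{simplem} closely, since the only change from that setting is that the rightmost operator is now ${\F}_{\lambda'_i}$ rather than ${\F}_2$, and the immediately preceding power of $\spi^-$ is $\lambda_i - 2$ rather than $\lambda_i - \lambda_{i-1} - 1$. The two tools used throughout are relation (5)(b) of Definition \ref{definition 2}, which slides $\spi^-$ past ${\F}$ (shifting the subscript by the exponent and picking up a homological shift), and relation (6)(c), which commutes ${\E}_r$ past ${\F}_s$ at a weight $(k_1, k_2)$ provided $-k_1 + 1 \leq r + s \leq k_2 - 1$.

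First I would apply (5)(b) to rewrite $({\spi^{-}})^{\lambda_i - 2} {\F}_{\lambda'_i}$ as ${\F}_{\lambda'_i - \lambda_i + 2} ({\spi^{-}})^{\lambda_i - 2}$ up to a shift, then apply (6)(c) to swap ${\E}_{-k-2+i}$ past ${\F}_{\lambda'_i - \lambda_i + 2}$ at weight $(k-i, N-k+i)$; the required inequality reduces to $1 \leq \lambda'_i - \lambda_i \leq N - 1$, which is precisely where the hypothesis $\lambda_i < \lambda'_i$ is used (the upper bound comes from $\lambda'_i \leq N - k$). I then iterate: cross the next $\spi^-$ past the running ${\F}$, then swap the next ${\E}$ past it, and repeat. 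A telescoping computation shows that after $m$ such rounds the subscript of ${\F}$ equals $\lambda'_i - \lambda_{i+m} + (m+2)$, and the corresponding commutation condition reduces to $0 \leq \lambda'_i - \lambda_{i+m} \leq N - m - 1$, which holds because $\lambda'_i > \lambda_i \geq \lambda_{i+m}$ and $\lambda'_i \leq N - k$.

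After all $k - i + 1$ rounds, ${\F}$ arrives just to the right of ${\E}_{-2}$, with subscript telescoping to $s_{\mathrm{final}} = \lambda'_i - \lambda_k + (k - i) + 2$. The final commutation of ${\E}_{-2}$ past ${\F}_{s_{\mathrm{final}}}$ at weight $(0, N)$ requires $1 \leq \lambda'_i - \lambda_k + (k - i) \leq N - 1$, which follows from the same bounds. This produces ${\F}_{s_{\mathrm{final}}} {\E}_{-2} \bo_{(0, N)}$, and since $(-1, N+1) \notin C(2, N)$ we have $\bo_{(-1, N+1)} = 0$ and hence ${\E}_{-2} \bo_{(0, N)} = 0$, collapsing the whole expression (and the outer $({\spi^{-}})^{-\lambda_k + 1}$ preserves the zero). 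The main obstacle is purely the bookkeeping of intermediate weights and evolving subscripts; the telescoping structure mirrors that of Lemma \ref{simplem}, and the strict inequality $\lambda_i < \lambda'_i$ is exactly what is needed to ensure every intermediate commutation succeeds.
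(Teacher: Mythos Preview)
Your proposal is correct and follows essentially the same approach as the paper's proof: both arguments slide $\spi^-$ past $\F$ via relation (5)(b), commute each successive $\E$ past $\F$ via (6)(c) after verifying the required range on $r+s$, and conclude by observing $\E_{-2}\bo_{(0,N)}=0$. Your telescoping formula $\lambda'_i-\lambda_{i+m}+(m+2)$ for the running $\F$-subscript and the reduced commutation condition agree with the paper's step-by-step computation; the only differences are cosmetic (you package the inequalities as bounds on $\lambda'_i-\lambda_{i+m}$, whereas the paper bounds $r+s$ directly, and your count of ``$k-i+1$ rounds'' is slightly ambiguous but the final subscript $s_{\mathrm{final}}=\lambda'_i-\lambda_k+(k-i)+2$ and the final range check at weight $(0,N)$ match exactly).
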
 

\begin{proof}
First, we have to calculate 
\begin{equation*}
	{\E}_{-k-2+i}({\spi^{-}})^{\lambda_{i}-2}{\F}_{\lambda'_{i}}\bo_{(k-i,N-k+i)}\cong {\E}_{-k-2+i}{\F}_{\lambda'_{i}-\lambda_{i}+2}({\spi^{-}})^{\lambda_{i}-2}\bo_{(k-i,N-k+i)}.
\end{equation*}

Since $0 \leq \lambda_{i} < \lambda'_{i} \leq N-k$, we have $1 \leq \lambda'_{i}-\lambda_{i} \leq N-k$. Thus $-k+i+1 \leq \lambda'_{i}-\lambda_{i}-k+i \leq N-2k+i$. Because $k \geq i \geq 1$, we have $N-2k+i \leq N-k+i-1$ which implies that ${\E}_{-k-2+i}{\F}_{\lambda'_{i}-\lambda_{i}+2}\bo_{(k-i,N-k+i)} \cong {\F}_{\lambda'_{i}-\lambda_{i}+2}{\E}_{-k-2+i}\bo_{(k-i,N-k+i)}$.
Thus 
\begin{align}
\begin{split} \label{eq 13}
&({\spi^{-}})^{-\lambda_{k}+1}{\E}_{-2}({\spi^{-}})^{\lambda_{k}-\lambda_{k-1}-1}{\E}_{-3}...{\E}_{-k-2+i}({\spi^{-}})^{\lambda_{i}-2}{\F}_{\lambda'_{i}}\bo_{(k-i,N-k+i)} \\
&\cong ({\spi^{-}})^{-\lambda_{k}+1}{\E}_{-2}({\spi^{-}_{i}})^{\lambda_{k}-\lambda_{k-1}-1}{\E}_{-3}...({\spi^{-}})^{\lambda_{i+1}-\lambda_{i}-1}{\F}_{\lambda'_{i}-\lambda_{i}+2}{\E}_{-k-2+i}({\spi^{-}})^{\lambda_{i}-2}\bo_{(k-i,N-k+i)}.
\end{split}
\end{align}

The next thing we have to calculate is 
\begin{equation*}
	{\E}_{-k-1+i}({\spi^{-}})^{\lambda_{i+1}-\lambda_{i}-1}{\F}_{\lambda'_{i}-\lambda_{i}+2}\bo_{(k-i-1,N-k+i+1)} \cong {\E}_{-k-1+i}{\F}_{\lambda'_{i}-\lambda_{i+1}+3}({\spi^{-}})^{\lambda_{i+1}-\lambda_{i}-1}\bo_{(k-i-1,N-k+i+1)}.
\end{equation*}

Since $0 \leq \lambda_{i+1} \leq \lambda_{i} < \lambda'_{i} \leq N-k$, we get $1 \leq \lambda'_{i}-\lambda_{i+1} \leq N-k$. So $-k+3+i \leq \lambda'_{i}-\lambda_{i+1}-k+2+i \leq N-2k+2+i$. Now $k \geq i+1 \geq 2$, we have $N-2k+2+i \leq N-k+i$ which implies that ${\E}_{-k-1+i}{\F}_{\lambda'_{i}-\lambda_{i+1}+3}\bo_{(k-i-1,N-k+i+1)} \cong {\F}_{\lambda'_{i}-\lambda_{i+1}+3}{\E}_{-k-1+i}\bo_{(k-i-1,N-k+i+1)}$.

Continuing this process, we have 
\begin{align*}
(\ref{eq 13}) & \cong ({\spi^{-}})^{-\lambda_{k}+1}{\E}_{-2}({\spi^{-}})^{\lambda_{k}-\lambda_{k-1}-1}{\E}_{-3}...{\F}_{\lambda'_{i}-\lambda_{i+1}+3}{\E}_{-k-1+i}({\spi^{-}})^{\lambda_{i+1}-\lambda_{i}-1}{\E}_{-k-2+i}({\spi^{-}})^{\lambda_{i}-2}\bo_{(k-i,N-k+i)} \\
	& \cong ...\\
	& \cong ({\spi^{-}})^{-\lambda_{k}+1}{\E}_{-2}({\spi^{-}})^{\lambda_{k}-\lambda_{k-1}-1}{\F}_{\lambda'_{i}-\lambda_{k-1}+k-i+1}...{\E}_{-k-2+i}({\spi^{-}})^{\lambda_{i}-2}\bo_{(k-i,N-k+i)}
\end{align*}

Finally, we end up with
\begin{equation*}
{\E}_{-2}({\spi^{-}})^{\lambda_{k}-\lambda_{k-1}-1}{\F}_{\lambda'_{i}-\lambda_{k-1}+k-i+1}\bo_{(0,N)} \cong {\E}_{-2}{\F}_{\lambda'_{i}-\lambda_{k}+k-i+2}({\spi^{-}})^{\lambda_{k}-\lambda_{k-1}-1}\bo_{(0,N)}
\end{equation*} similarly argument shows that $k-i+1 \leq \lambda'_{i}-\lambda_{k}+k-i \leq N-i$. Since $i \geq 1$ and $k-i\geq0$, we get 
\begin{equation*}
	{\E}_{-2}{\F}_{\lambda'_{i}-\lambda_{k}+k-i+2}({\spi^{-}})^{\lambda_{k}-\lambda_{k-1}-1}\bo_{(0,N)} \cong {\F}_{\lambda'_{i}-\lambda_{k}+k-i+2}({\spi^{-}})^{\lambda_{k}-\lambda_{k-1}-1}{\E}_{-2}\bo_{(0,N)} \cong 0
\end{equation*} which implies that $(\ref{eq 13}) \cong 0$.
\end{proof}

As a consequence, $(\ref{eq 20}) \cong 0$ and we prove property (2). The proof is complete.
\end{proof}

Now we give the proof of Theorem \ref{Proposition 4}, where the idea is keep applying Theorem \ref{Proposition 3}.

\begin{proof}[Proof of Theorem \ref{Proposition 4}]

For the first property. Given $\blam(i)=(\lambda_{i,1},...,\lambda_{i,}) \in P(k_{i+1},\Bbbk_{i})$ for all $1 \leq i \leq n-1$. Applying adjunction and keep using Theorem \ref{Proposition 3}, we have 
\begin{align*} 
		&\Hom({\F}_{1,\blam(1)}{\F}_{2,\blam(2)} ... {\F}_{n-1, \blam(n-1)} \bo_{\eta},{\F}_{1,\blam(1)}{\F}_{2,\blam(2)} ... {\F}_{n-1, \blam(n-1)} \bo_{\eta})   \\
		&\cong 	\Hom( \bo_{\eta},({\F}_{1,\blam(1)}{\F}_{2,\blam(2)} ... {\F}_{n-1, \blam(n-1)})^{R}{\F}_{1,\blam(1)}{\F}_{2,\blam(2)} ... {\F}_{n-1, \blam(n-1)} \bo_{\eta})  \\
		& \cong \Hom( \bo_{\eta}, ({\F}_{n-1, \blam(n-1)})^{R}...({\F}_{2,\blam(2)})^{R}({\F}_{1,\blam(1)})^{R}{\F}_{1,\blam(1)}{\F}_{2,\blam(2)} ... {\F}_{n-1, \blam(n-1)} \bo_{\eta}) \\
		&\cong \Hom( \bo_{\eta}, ({\F}_{n-1, \blam(n-1)})^{R}...({\F}_{2,\blam(2)})^{R}{\F}_{2,\blam(2)} ... {\F}_{n-1, \blam(n-1)} \bo_{\eta}) \\
		& \cong ... \\
		& \cong  \Hom( \bo_{\eta}, \bo_{\eta})
\end{align*} which proves the first property.
	
	To prove the second property, given $\blam(i), \blam(i)' \in P(k_{i+1},\Bbbk_{i})$ for all $1 \leq i \leq n-1$ and assume that $(\blam(1),...,\blam(n-1)) <_{pl} (\blam(1)',...,\blam(n-1)')$. This means that there exist $1 \leq i \leq n-1$ such that $\blam(j)=\blam(j)$ for all $0 \leq j \leq i-1$ and $\blam(i) <_{l} \blam(i)'$. 
	
	 Then applying the adjunction and Theorem \ref{Proposition 3} we get 
\begin{align*}
		&\Hom({\F}_{1,\blam(1)}{\F}_{2,\blam(2)} ... {\F}_{n-1, \blam(n-1)}\bo_{\eta},{\F}_{1,\blam(1)'}{\F}_{2,\blam(2)'} ... {\F}_{n-1, \blam(n-1)'} \bo_{\eta}) \\
		& \cong 	\Hom(\bo_{\eta},({\F}_{1,\blam(1)}{\F}_{2,\blam(2)} ... {\F}_{n-1, \blam(n-1)} \bo_{\eta})^{R}{\F}_{1,\blam(1)'}{\F}_{2,\blam(2)'} ... {\F}_{n-1, \blam(n-1)'} \bo_{\eta}) \\
		& \cong \Hom(\bo_{\eta},({\F}_{n-1, \blam(n-1)})^{R}...({\F}_{2,\blam(2)})^{R}({\F}_{1,\blam(1)})^{R}{\F}_{1,\blam(1)'}{\F}_{2,\blam(2)'} ... {\F}_{n-1, \blam(n-1)'} \bo_{\eta}) \\
		&\cong  \Hom(\bo_{\eta},({\F}_{n-1, \blam(n-1)})^{R}...({\F}_{3,\blam(3)})^{R}({\F}_{2,\blam(2)})^{R}{\F}_{2,\blam(2)'} ... {\F}_{n-1, \blam(n-1)'} \bo_{\eta}) \\
		& \cong ... \\
		& \cong  \Hom(\bo_{\eta},({\F}_{n-1, \blam(n-1)})^{R}...({\F}_{i+1,\blam(i+1)})^{R}({\F}_{i,\blam(i)})^{R}{\F}_{i,\blam(i)'} ... {\F}_{n-1, \blam(n-1)'}
		\bo_{\eta}) \\
		& \cong \Hom(\bo_{\eta},0) \cong 0
	\end{align*} where the last isomorphism using the fact that $\blam(i) <_{l} \blam(i)'$ and $({\F}_{i,\blam(i)})^{R}{\F}_{i,\blam(i)'} \cong 0$ in the proof of Theorem \ref{Proposition 3}.
	
	The proof is complete.
\end{proof} 

From Theorem \ref{Theorem 1}, we have the dual exceptional collection
$\langle \ \s_{\bmu}\CC^N/\V[-|\bmu|] \ \rangle_{\bmu \in P(k,N-k)}$. We can omit the homological shifts in each term since it is used in order to make $\Hom$ concentrate in homological degree 0 in the definition of dual exceptional collection. Thus we still obtain an exceptional collection $\langle \ \s_{\bmu}\CC^N/\V \ \rangle_{\bmu \in P(k,N-k)}$.

Now we consider the dual (as vector space) of the exceptional collection $\langle \ \s_{\bmu}\CC^N/\V \ \rangle_{\bmu \in P(k,N-k)}$. It is easy to see that $\langle \ \s_{\bmu}(\CC^N/\V)^{\vee} \ \rangle_{\bmu \in P(k,N-k)}$ is again an exceptional collection with the opposite order; i.e., $\Hom( \s_{\bmu}(\CC^N/\V)^{\vee}, \s_{\bmu'}(\CC^N/\V)^{\vee})=0$ for $\bmu <_{l} \bmu'$ where $\bmu, \bmu' \in P(k,N-k)$.

We define the functor 
\begin{equation}
	{\E}_{-\bmu}\bo_{(N,0)}:={\E}_{-\mu_{1}}...{\E}_{-\mu_{N-k}}\bo_{(N,0)} \in \Hom(\Kk(N,0),\Kk(k,N-k))
\end{equation} where $\bmu=(\mu_{1},...,\mu_{N-k}) \in P(k,N-k)$. 

Then like Theorem \ref{Proposition 3} we have the following  result for ${\E}_{-\bmu}\bo_{(N,0)}$ whose proof is exactly the same by using relations (4)(a), (5)(a), and (6)(a)(c) in Definition \ref{definition 2}. 

\begin{theorem} \label{Theorem 6}
	Given a partial categorical $\dot{\Uu}_{0,N}(L\SL_2)$ action $\Kk$. Considering the functors 
	\begin{equation*}
		{\E}_{-\bmu}\bo_{(N,0)}:={\E}_{-\mu_{1}}...{\E}_{-\mu_{N-k}}\bo_{(N,0)} \in \Hom(\Kk(N,0),\Kk(k,N-k))
	\end{equation*} where $\bmu=(\mu_{1},...,\mu_{N-k}) \in P(k,N-k)$. Then $\{{\E}_{-\bmu}\bo_{(N,0)}\}_{\bmu \in P(k,N-k)}$ satisfy the following properties
	\begin{enumerate}
		\item  $\Hom({\E}_{-\bmu}\bo_{(N,0)},{\E}_{-\bmu}\bo_{(N,0)}) \cong \Hom(\bo_{(N,0)},\bo_{(N,0)})$,
		\item  $\Hom({\E}_{-\bmu}\bo_{(N,0)},{\E}_{-\bmu'}\bo_{(N,0)}) \cong 0$ if $\bmu <_{l} \bmu'$.
	\end{enumerate}
\end{theorem}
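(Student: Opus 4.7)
The plan is to carry out the same iterated-adjunction telescoping argument as in the proof of Theorem \ref{Proposition 3}, transported through the formal involution on Definition \ref{definition 2} that swaps $\E \leftrightarrow \F$ and $\spi^+ \leftrightarrow \spi^-$. Relation (4)(a) gives the $\E$-analog of the right-adjoint formula (4)(b); relation (5)(a) mirrors (5)(b) for commuting $\spi^+$ past $\E_{r}$; the exact triangle in (6)(a) at $r+s=k_{i+1}$ plays the role of (6)(b) at $r+s=-k_i$; and the central-range commutation (6)(c) is manifestly self-dual. Under this dictionary the highest-weight object $\bo_{(0,N)}$, at which $\E$ vanishes, is replaced by the lowest-weight object $\bo_{(N,0)}$, at which $\F$ vanishes (since $\bo_{(N+1,-1)}=0$).

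First I would apply adjunction to reduce both claims to the computation of $(\E_{-\bmu}\bo_{(N,0)})^{R}\E_{-\bmu'}\bo_{(N,0)}$. A direct iteration of relation (4)(a) produces the $\E$-analog of Lemma \ref{ra}, namely an expression of the form
\[
(\E_{-\bmu}\bo_{(N,0)})^{R} \cong (\spi^{+})^{\mu_{N-k}+1}\F_{2}(\spi^{+})^{\mu_{N-k-1}-\mu_{N-k}-1}\F_{3}\cdots \F_{k+1}(\spi^{+})^{-\mu_{1}-2}\bo_{(k,N-k)}[\ast],
\]
with an explicit overall homological shift that does not affect the vanishing analysis. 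Then, line by line as in the proof of Theorem \ref{Proposition 3}, I would push the $\spi^{+}$'s past the $\E_{-\mu'_{j}}$'s using (5)(a) (which shifts the subscripts of the $\E$'s by the $\spi^{+}$-exponents), commute each resulting $\F$ past the $\E$'s in the interior range $-k_i+1\le r+s\le k_{i+1}-1$ using (6)(c), and invoke the boundary triangle (6)(a) exactly once at each reduction step, producing either a $\spi^{+}\bo$ contribution or a term that is annihilated because a trailing $\F$ eventually meets $\bo_{(N,0)}$. The analogs of Lemma \ref{simplem} and the subsequent inductive simplification go through with the bounds $0\le \mu_{j+1}\le \mu_{j}\le k$ and $1\le j\le N-k$ replacing the bounds $0\le \lambda_{j+1}\le \lambda_{j}\le N-k$ used in the original.

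For property (1) the telescoping collapses to the innermost boundary triangle at the weight where $k_2=0$, in which the $\E\F$-term vanishes via $\F\bo_{(N,0)}\cong 0$, leaving a $\spi^{+}\bo_{(N,0)}$ contribution that is cancelled by the residual $\spi^{+}$ powers to yield $\bo_{(N,0)}$. For property (2), at the minimal index $i$ with $\mu_{i}<\mu'_{i}$ the same commutation produces, after all moves land in the interior range, an unmatched trailing $\F$ against $\bo_{(N,0)}$, forcing the composition to be $0$. The main obstacle, as in the original proof, is the purely combinatorial bookkeeping: tracking the $\spi^{+}$-exponents and their induced subscript shifts on the $\E$'s, and verifying at every interior step that the loop-index sum falls into the commuting range of (6)(c) rather than onto a boundary. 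Once those inequalities are checked, the rest of the argument is formally identical to that of Theorem \ref{Proposition 3}.
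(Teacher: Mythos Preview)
Your approach is exactly what the paper does: it states that the proof is identical to that of Theorem \ref{Proposition 3} under the involution swapping $\E\leftrightarrow\F$, $\spi^{+}\leftrightarrow\spi^{-}$, highest weight $\leftrightarrow$ lowest weight, using relations (4)(a), (5)(a), (6)(a)(c) in place of (4)(b), (5)(b), (6)(b)(c). A few of the exponents and indices in your schematic right-adjoint formula are off (e.g.\ the leading power should be $(\spi^{+})^{-\mu_{N-k}+1}$ rather than $(\spi^{+})^{\mu_{N-k}+1}$, and the rightmost $\F$ should carry index $N-k+1$ rather than $k+1$), but you flag the formula as ``of the form'' and correctly identify the bookkeeping as the only substantive work, so this does not affect the validity of the plan.
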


\subsection{Consequences}

In this section, we mention some corollaries of the main theorem. All of them easily follow from Theorem \ref{Proposition 4}, \ref{Proposition 3}, \ref{Theorem 6}.

The following one is easy to see from property (1) in Theorem \ref{Proposition 3}.
\begin{corollary} \label{Corollary 1}
	Given a partial categorical $\dot{\Uu}_{0,N}(L\SL_2)$ action $\Kk$. All the functors ${\F}_{\blam}\bo_{(0,N)} \in \Hom(\Kk(0,N),\Kk(k,N-k))$ where $\blam \in P(N-k,k)$  are fully faithful.
\end{corollary}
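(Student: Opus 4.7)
The plan is to derive fully-faithfulness directly from the content of the proof of Theorem \ref{Proposition 3}(1), which in fact establishes a bit more than the statement of that theorem records. Specifically, the explicit chain of simplifications built from relations (4)(b), (5)(b), and (6)(b)(c) of Definition \ref{definition 2} produces not only the isomorphism of $\Hom$-spaces stated in Theorem \ref{Proposition 3}(1), but the underlying functorial isomorphism
\begin{equation*}
({\F}_{\blam}\bo_{(0,N)})^{R}\,{\F}_{\blam}\bo_{(0,N)} \;\cong\; \bo_{(0,N)}
\end{equation*}
in the 1-morphism category $\Hom(\Kk(0,N),\Kk(0,N))$. I would begin the proof by recording this stronger statement explicitly, since the corollary will follow from it.

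Given the functorial iso, I would invoke the standard fact that a 1-morphism $F$ with right adjoint $F^{R}$ is fully faithful if and only if the unit of adjunction $\text{id} \to F^{R}F$ is an isomorphism. Concretely, using the adjunction ${\F}_{\blam}\bo_{(0,N)} \dashv ({\F}_{\blam}\bo_{(0,N)})^{R}$ pointwise, for any $X,Y \in \Kk(0,N)$ one obtains
\begin{equation*}
\Hom_{\Kk(k,N-k)}\!\bigl({\F}_{\blam}\bo_{(0,N)}(X),\,{\F}_{\blam}\bo_{(0,N)}(Y)\bigr) \;\cong\; \Hom_{\Kk(0,N)}\!\bigl(X,\,({\F}_{\blam}\bo_{(0,N)})^{R}\,{\F}_{\blam}\bo_{(0,N)}(Y)\bigr),
\end{equation*}
and then substituting the displayed functorial iso turns the right-hand side into $\Hom_{\Kk(0,N)}(X,Y)$, which is exactly what fully-faithfulness asks for.

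The only delicate point, which I would address briefly, is verifying that the abstract iso extracted from the proof of Theorem \ref{Proposition 3} coincides (up to an invertible scalar in $\End(\bo_{(0,N)})$) with the unit of adjunction, so that the chain of $\Hom$-isomorphisms in the previous display really is the canonical pullback on morphisms. This is essentially automatic in our setup: every simplification step in the proof of Theorem \ref{Proposition 3} is built from the adjunction described in Definition \ref{definition 2}(4)(b) together with the structural triangles of (6)(b)(c), and no extra freedom is introduced along the way. Thus Corollary \ref{Corollary 1} is a formal consequence of Theorem \ref{Proposition 3}(1), with no additional calculation required.
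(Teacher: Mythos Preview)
Your approach is the same as the paper's: both extract from the proof of Theorem~\ref{Proposition 3} the stronger functorial statement $({\F}_{\blam}\bo_{(0,N)})^{R}{\F}_{\blam}\bo_{(0,N)}\cong\bo_{(0,N)}$ and deduce full faithfulness from it. The paper simply records this as ``easy to see from property (1)'' without further comment, so you have in fact written out more than the paper does.

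That said, your handling of the ``delicate point'' is not quite right. Definition~\ref{definition 2} explicitly states that all the isomorphisms appearing in its conditions are \emph{abstract}, i.e.\ no specific 2-morphisms are chosen to realize them. So each simplification step in the proof of Theorem~\ref{Proposition 3} introduces an unspecified isomorphism, and there is no reason the composite should agree with the adjunction unit; your claim that ``no extra freedom is introduced along the way'' is not justified by the axioms.

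The clean fix uses condition~(1) of Definition~\ref{definition 2} (finite-dimensional Hom-spaces). From any natural isomorphism $({\F}_{\blam})^{R}{\F}_{\blam}\cong\bo_{(0,N)}$ one sees immediately that ${\F}_{\blam}$ is faithful (it has a left quasi-inverse). Adjunction then gives
\[
\dim\Hom_{\Kk(k,N-k)}({\F}_{\blam}X,{\F}_{\blam}Y)=\dim\Hom_{\Kk(0,N)}(X,({\F}_{\blam})^{R}{\F}_{\blam}Y)=\dim\Hom_{\Kk(0,N)}(X,Y),
\]
so the linear map ${\F}_{\blam}:\Hom(X,Y)\to\Hom({\F}_{\blam}X,{\F}_{\blam}Y)$ is an injection between finite-dimensional $\CC$-vector spaces of the same dimension, hence an isomorphism. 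This avoids entirely the question of whether the constructed isomorphism is the unit.
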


Next, the subcategories generated by the essential image of the collection of such functors gives rise to a $\SOD$ of the weight category $\Kk(k,N-k)$.
\begin{corollary} \label{Theorem 4}
	Given a partial categorical $\dot{\Uu}_{0,N}(L\SL_2)$ action $\Kk$. We denote 
	$\textnormal{Im}{\F}_{\blam}\bo_{(0,N)}$ to be the minimal full triangulated subcategories of $\Kk(k,N-k)$ generated by the class of objects which are the essential images of ${\F}_{\blam}\bo_{(0,N)}$. Then we have the following $\SOD$ 
	\begin{equation*}
		\Kk(k,N-k)=\langle \Aa(k,N-k), \ \textnormal{Im}{\F}_{\blam}\bo_{(0,N)}  \rangle_{\blam \in P(N-k,k)}
	\end{equation*} where $\Aa(k,N-k):=\langle \textnormal{Im}{\F}_{\blam}\bo_{(0,N)} \rangle_{\blam \in P(N-k,k)}^{\perp}$ is the orthogonal complement.
\end{corollary}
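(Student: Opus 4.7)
The plan is to assemble the $\SOD$ by combining three ingredients already in hand: fully-faithfulness (Corollary \ref{Corollary 1}), the semiorthogonality statement (Theorem \ref{Proposition 3}(2)), and the abstract criterion for producing an $\SOD$ from an admissible subcategory (Lemma \ref{lemma 1}, or more directly Lemma \ref{lemma 2}). I would set $\Aa_{\blam} := \textnormal{Im}{\F}_{\blam}\bo_{(0,N)}$ for each $\blam \in P(N-k,k)$, and totally order $P(N-k,k)$ by $<_{l}$, with the convention that larger $\blam$ sit to the left in the $\SOD$.

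First I would verify that each $\Aa_{\blam}$ is right admissible inside $\Kk(k,N-k)$. Since ${\F}_{\blam}\bo_{(0,N)}$ is fully faithful by Corollary \ref{Corollary 1}, its essential image is a full triangulated subcategory equivalent to $\Kk(0,N)$, and the inclusion functor $i_{\blam}:\Aa_{\blam}\hookrightarrow\Kk(k,N-k)$ has a right adjoint obtained from the right adjoint of ${\F}_{\blam}\bo_{(0,N)}$; this right adjoint exists by iterating condition (4)(b) of Definition \ref{definition 2}, exactly as in Lemma \ref{ra}. Next I would record the semiorthogonality of the family $\{\Aa_{\blam}\}$: if $\blam <_{l} \blam'$ then for any objects $X,Y\in \Kk(0,N)$,
\begin{equation*}
\Hom({\F}_{\blam'}\bo_{(0,N)}(Y),\,{\F}_{\blam}\bo_{(0,N)}(X))\cong \Hom(Y,({\F}_{\blam'}\bo_{(0,N)})^{R}{\F}_{\blam}\bo_{(0,N)}(X))\cong 0,
\end{equation*}
where the final vanishing is Theorem \ref{Proposition 3}(2) applied after swapping the roles of $\blam$ and $\blam'$ (i.e.\ it is the same computation of the composition of the right adjoint with ${\F}_{\blam}\bo_{(0,N)}$ that appeared in the proof of that theorem). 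Passing from generators to the full triangulated subcategory they generate, this gives $\Hom(\Aa_{\blam'},\Aa_{\blam})=0$ for $\blam <_{l} \blam'$, which is precisely the semiorthogonal condition once we place $\blam'$ to the left of $\blam$.

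Having a semiorthogonal sequence of right admissible subcategories, I would invoke Lemma \ref{lemma 2} with $i=0$ (or equivalently Lemma \ref{lemma 1} applied to the right admissible join $\langle \Aa_{\blam}\rangle_{\blam\in P(N-k,k)}$) to conclude
\begin{equation*}
\Kk(k,N-k)=\langle \Aa(k,N-k),\,\Aa_{\blam}\rangle_{\blam \in P(N-k,k)},
\end{equation*}
with the leftmost component $\Aa(k,N-k)=\langle \Aa_{\blam}\rangle_{\blam\in P(N-k,k)}^{\perp}$ being, by construction, the orthogonal complement. The only point requiring care is confirming that the join of the right admissible pieces $\Aa_{\blam}$ is again right admissible; this follows from the standard fact that the span of finitely many semiorthogonal right admissible subcategories is right admissible (its right adjoint is built by gluing the individual right adjoints using the semiorthogonal structure), and I expect this to be the main technical obstacle although it is purely formal once semiorthogonality and individual admissibility are in place.
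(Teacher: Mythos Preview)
Your proposal is correct and follows essentially the same route as the paper's proof: establish that the $\textnormal{Im}\,{\F}_{\blam}\bo_{(0,N)}$ form a semiorthogonal sequence of admissible subcategories (semiorthogonality from Theorem~\ref{Proposition 3}(2), admissibility from the existence of adjoints in Definition~\ref{definition 2}(4)), then apply Lemma~\ref{lemma 2}. The only minor difference is that the paper asserts full (two-sided) admissibility by noting that both $({\F}_{\blam}\bo_{(0,N)})^{L}$ and $({\F}_{\blam}\bo_{(0,N)})^{R}$ exist from the categorical-action axioms, whereas you argue only right admissibility and then handle the join via Lemma~\ref{lemma 1}; either variant suffices.
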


\begin{proof}
The main idea is to prove that $\{  \textnormal{Im}{\F}_{\blam}\bo_{(0,N)} \}_{\blam \in P(N-k,k)}$ forms a semiorthogonal sequence of admissible subcategories in $\Kk(k,N-k)$. Then using Lemma \ref{lemma 2} and by the definition of $\Aa(k,N-k)$ we have 
\begin{equation*}
	\langle \Aa(k,N-k), \textnormal{Im}{\F}_{\blam}\bo_{(0,N)}  \rangle_{\blam \in P(N-k,k)}
\end{equation*} is a $\SOD$ for $\Kk(k,N-k)$.

The semiorthogonal property for $\{  \textnormal{Im}{\F}_{\blam}\bo_{(0,N)} \}_{\blam \in P(N-k,k)}$ is easily followed from property (2) of Theorem \ref{Proposition 3}. Next, since from the definition of categorical action the left and right adjoint functors $({\F}_{\blam}\bo_{(0,N)})^{L}$, $({\F}_{\blam}\bo_{(0,N)})^{R}$ both exist, we have $\textnormal{Im}{\F}_{\blam}\bo_{(0,N)}$ are admissible subcategories.

The proof is complete.
\end{proof}

We have similar results for the functors ${\E}_{-\bmu}\bo_{(N,0)}$, which are summarized as the following corollary where the proof is omitted since it is pretty much the same as Corollary \ref{Theorem 4}.

\begin{corollary} \label{corollary}
We have $\{{\E}_{-\bmu}\bo_{(N,0)}\}_{\bmu \in P(k,N-k)}$ are fully faithful functors. We define 
$\textnormal{Im}{\E}_{-\bmu}\bo_{(N,0)}$ to be the minimal full triangulated subcategories of $\Kk(k,N-k)$ generated by the class of objects which are the essential images of ${\E}_{-\bmu}\bo_{(N,0)}$. Then we have the following $\SOD$ 
\begin{equation*}
	\Kk(k,N-k)=\langle \Bb(k,N-k), \textnormal{Im}{\E}_{-\bmu}\bo_{(N,0)} \rangle_{\bmu \in P(k,N-k)}
\end{equation*} where $\Bb(k,N-k):=\langle \textnormal{Im}{\E}_{-\bmu}\bo_{(N,0)} \rangle_{\bmu \in P(k,N-k)}^{\perp}$ is the orthogonal complement.
\end{corollary}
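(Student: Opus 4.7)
The proof will run in close parallel with that of Corollary~\ref{Theorem 4}, simply substituting Theorem~\ref{Theorem 6} for Theorem~\ref{Proposition 3} at each step. My plan is to deduce the two conditions required by Lemma~\ref{lemma 2}, namely a semiorthogonal sequence of admissible subcategories, and then read off the decomposition.

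First, to obtain full faithfulness of each $\E_{-\bmu}\bo_{(N,0)}$, I would invoke property (1) of Theorem~\ref{Theorem 6}. By adjunction,
\begin{equation*}
\Hom(\bo_{(N,0)}, (\E_{-\bmu}\bo_{(N,0)})^R \E_{-\bmu}\bo_{(N,0)}) \cong \Hom(\E_{-\bmu}\bo_{(N,0)}, \E_{-\bmu}\bo_{(N,0)}) \cong \Hom(\bo_{(N,0)}, \bo_{(N,0)}),
\end{equation*}
and the unit of the adjunction realizes this isomorphism, so $\E_{-\bmu}\bo_{(N,0)}$ is fully faithful (this step mirrors Corollary~\ref{Corollary 1}). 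This in particular means each $\textnormal{Im}\,\E_{-\bmu}\bo_{(N,0)}$ is equivalent to the essential image and inherits adjoints from the functor.

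Second, I would fix the ordering on $P(k,N-k)$ opposite to $<_l$ (so that for the semiorthogonal collection, $\Hom$ from right to left vanishes), and verify the two conditions of Lemma~\ref{lemma 2}. Semiorthogonality follows immediately from property (2) of Theorem~\ref{Theorem 6}: whenever $\bmu <_l \bmu'$, every morphism from an object in $\textnormal{Im}\,\E_{-\bmu'}\bo_{(N,0)}$ to an object in $\textnormal{Im}\,\E_{-\bmu}\bo_{(N,0)}$ factors through some $\Hom(\E_{-\bmu}\bo_{(N,0)}, \E_{-\bmu'}\bo_{(N,0)}) \cong 0$. Admissibility of each $\textnormal{Im}\,\E_{-\bmu}\bo_{(N,0)}$ follows from the existence of both left and right adjoints to $\E_{-\bmu}\bo_{(N,0)}$, which in turn follows from condition (4) of Definition~\ref{definition 2} (right adjoints of each $\E_{i,r}$ in terms of $\F$'s, composed), together with full faithfulness which ensures the inclusion functor $\textnormal{Im}\,\E_{-\bmu}\bo_{(N,0)} \hookrightarrow \Kk(k,N-k)$ also has both adjoints.

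Third and finally, applying Lemma~\ref{lemma 2} to the semiorthogonal sequence of admissible subcategories $\{\textnormal{Im}\,\E_{-\bmu}\bo_{(N,0)}\}_{\bmu \in P(k,N-k)}$ with the leftmost slot still to be filled produces
\begin{equation*}
\Kk(k,N-k) = \langle {}^{\perp}\langle \textnormal{Im}\,\E_{-\bmu}\bo_{(N,0)} \rangle_{\bmu} \cap \Kk(k,N-k), \ \textnormal{Im}\,\E_{-\bmu}\bo_{(N,0)} \rangle_{\bmu \in P(k,N-k)},
\end{equation*}
and the leftmost piece is exactly the definition of $\Bb(k,N-k)$. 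The only mildly subtle point I foresee — the main obstacle, though a modest one — is the bookkeeping between left and right orthogonal complements and the choice of a total order on $P(k,N-k)$ refining $<_l$ compatible with the orientation of Definition~\ref{definition 4}; beyond that, the argument is formal and the heavy lifting has already been done in Theorem~\ref{Theorem 6}.
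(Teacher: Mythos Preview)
Your proposal is correct and follows exactly the approach the paper indicates (the paper omits the proof, saying it is ``pretty much the same as Corollary~\ref{Theorem 4}'', which is precisely what you do: semiorthogonality from property~(2) of Theorem~\ref{Theorem 6}, admissibility from the existence of adjoints in Definition~\ref{definition 2}, then Lemma~\ref{lemma 2}). The only slip is in your final displayed formula: applying Lemma~\ref{lemma 2} with $i=0$ gives the \emph{right} orthogonal $\langle \textnormal{Im}\,\E_{-\bmu}\bo_{(N,0)}\rangle_{\bmu}^{\perp}$ in the leftmost slot, not the left orthogonal you wrote --- but you already flagged this bookkeeping as the one subtle point, and it matches the stated definition of $\Bb(k,N-k)$.
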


Now we extend the above results from $\SL_{2}$ to the general $\SL_{n}$ version, and again the proof is omitted due to the similarity.

\begin{corollary} \label{Corollary 2}
\sloppy Given a partial categorical $\dot{\Uu}_{0,N}(L\SL_n)$ action $\Kk$. The functors ${\F}_{1,\blam(1)}{\F}_{2,\blam(2)} ... {\F}_{n-1, \blam(n-1)} \bo_{\eta} \in \Hom(\Kk(\eta),\Kk(\kk))$, where $\blam(i)=(\blam(i)_{1},...,\blam(i)_{\Bbbk_{i}}) \in P(k_{i+1},\Bbbk_{i})$ for all $1 \leq i \leq n-1$, are all fully faithful.
\end{corollary}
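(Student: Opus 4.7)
The strategy is to reduce this directly to property (1) of Theorem \ref{Proposition 4}, mimicking the argument already used in Corollary \ref{Corollary 1} in the $\SL_2$ case. Write $F := {\F}_{1,\blam(1)}{\F}_{2,\blam(2)} \cdots {\F}_{n-1, \blam(n-1)} \bo_{\eta}$. Since condition (4) of Definition \ref{definition 2} guarantees that each elementary functor ${\F}_{i,s}$ has a right adjoint, the composite $F$ has a right adjoint $F^R$, explicitly
\begin{equation*}
F^R \cong ({\F}_{n-1, \blam(n-1)})^R \cdots ({\F}_{2,\blam(2)})^R ({\F}_{1,\blam(1)})^R,
\end{equation*}
obtained by reversing the order of composition and taking adjoints of each factor.

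To check that $F$ is fully faithful, I would show that for any $X,Y \in \Kk(\eta)$ the natural map
\begin{equation*}
\Hom_{\Kk(\eta)}(X,Y) \longrightarrow \Hom_{\Kk(\kk)}(FX,FY)
\end{equation*}
is an isomorphism. By the adjunction $(F,F^R)$, the right-hand side is isomorphic to $\Hom_{\Kk(\eta)}(X, F^R F Y)$, and this identification is induced by the unit $\bo_{\eta} \to F^R F$. The key input is that the proof of Theorem \ref{Proposition 4}(1) (reducing via Theorem \ref{Proposition 3} to the iterated $\SL_2$ computation) does not merely compare $\Hom$ spaces: it constructs an explicit isomorphism of functors $F^R F \cong \bo_{\eta}$ from a chain of natural isomorphisms provided by relations (4)(b), (5)(b) and the exact triangles of (6)(b), (6)(c) in Definition \ref{definition 2}. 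Chaining this with the adjunction iso yields $\Hom(FX,FY) \cong \Hom(X,Y)$ for every pair $X,Y$.

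The main subtlety to be careful about is that a functorial isomorphism $F^R F \cong \bo_{\eta}$ is formally weaker than the statement that the unit of the adjunction is an isomorphism, and only the latter is equivalent to $F$ being fully faithful. However, every isomorphism produced in the proof of Theorem \ref{Proposition 4}(1) arises from applying units and counits of the elementary adjunctions $({\E}_{i,r}, ({\E}_{i,r})^R)$ and $({\F}_{i,s},({\F}_{i,s})^R)$ and the canonical triangles relating ${\E}_{i,r}{\F}_{i,s}$ with ${\F}_{i,s}{\E}_{i,r}$; these are precisely the data that go into the unit of $(F,F^R)$ by the triangle identities for iterated adjunctions. Thus the composite iso $F^R F \cong \bo_{\eta}$ agrees with (the inverse of) the unit up to an automorphism of $\bo_{\eta}$, which is enough to conclude that the unit is itself an isomorphism and therefore $F$ is fully faithful. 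This final book-keeping step is where the real (if routine) work lies.
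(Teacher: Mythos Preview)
Your approach is the same as the paper's: the paper gives no proof beyond ``easy to see from property (1)'' (the $\SL_2$ case, Corollary~\ref{Corollary 1}) and an omitted extension to $\SL_n$, and both you and the paper are ultimately appealing to the functor isomorphism $F^R F \cong \bo_\eta$ established inside the proof of Theorem~\ref{Proposition 4}. You are more careful than the paper in flagging that an abstract isomorphism $F^R F \cong \bo_\eta$ is not literally the same as the unit of adjunction being invertible.

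Where your argument does not close the gap is the last paragraph. You assert that the isomorphisms assembled in the proof of Theorem~\ref{Proposition 4} ``arise from applying units and counits of the elementary adjunctions \ldots and the canonical triangles relating ${\E}_{i,r}{\F}_{i,s}$ with ${\F}_{i,s}{\E}_{i,r}$'' and hence match the global unit up to an automorphism of $\bo_\eta$. But Definition~\ref{definition 2} explicitly says that ``the isomorphisms between functors that appear in every condition are abstractly defined, i.e., we do not specify any 2-morphisms that induce those isomorphisms.'' In particular the maps in the triangles of condition (6) are \emph{not} stipulated to be adjunction (co)units, and the isomorphisms in (5) are not stipulated to be natural in any particular sense. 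So the compatibility you invoke is not part of the axioms, and the ``routine book-keeping'' you defer is in fact an additional hypothesis. To make the argument rigorous you would either need to strengthen the definition (requiring the morphisms in (6) to be the adjunction maps, as they are in the geometric model of Theorem~\ref{Theorem 3}), or give a separate argument that any natural endomorphism $\bo_\eta \to F^R F \cong \bo_\eta$ arising as a unit must be invertible (for instance by showing the degree-zero part of $\End(\bo_\eta)$ is a field and the unit is nonzero). The paper does not do this either; it simply asserts the corollary.
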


\begin{corollary} \label{Theorem 5}
\sloppy Given a partial categorical $\dot{\Uu}_{0,N}(L\SL_n)$ action $\Kk$.  We denote  
$\textnormal{Im}{\F}_{1,\blam(1)}{\F}_{2,\blam(2)} ... {\F}_{n-1, \blam(n-1)} \bo_{\eta}$ to be the minimal full triangulated subcategories of $\Kk(\kk)$ generated by the class of objects which are the essential images of ${\F}_{1,\blam(1)}{\F}_{2,\blam(2)} ... {\F}_{n-1, \blam(n-1)} \bo_{\eta}$ where $\blam(i)=(\blam(i)_{1},...,\blam(i)_{\Bbbk_{i}}) \in P(k_{i+1},\Bbbk_{i})$ for all $1 \leq i \leq n-1$. Then we have the following $\SOD$
\begin{equation*}
	\Kk(\kk)=\langle \Aa(\kk), \textnormal{Im}{\F}_{1,\blam(1)}{\F}_{2,\blam(2)} ... {\F}_{n-1, \blam(n-1)} \bo_{\eta} \rangle_{\blam(i) \in P(k_{i+1},\Bbbk_{i})}
\end{equation*} where $\Aa(\kk)=\langle \textnormal{Im}{\F}_{1,\blam(1)}{\F}_{2,\blam(2)} ... {\F}_{n-1, \blam(n-1)} \bo_{\eta}  \rangle_{\blam(i) \in P(k_{i+1},\Bbbk_{i})}^{\perp}$ is the orthogonal complement.
\end{corollary}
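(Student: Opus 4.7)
The plan is to mirror the proof of Corollary \ref{Theorem 4}, upgrading it from the $\SL_2$ case to the general $\SL_n$ case. The two ingredients needed to apply Lemma \ref{lemma 2} are (a) semiorthogonality of the proposed sequence of subcategories and (b) admissibility of each subcategory in the sequence.

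First, I would fix an enumeration of the index set $\prod_{i=1}^{n-1}P(k_{i+1},\Bbbk_i)$ by the product lexicographic order $<_{pl}$, and write $\textnormal{Im}\,{\F}_{1,\blam(1)}\cdots{\F}_{n-1,\blam(n-1)}\bo_\eta$ for the corresponding full triangulated subcategory of $\Kk(\kk)$. Semiorthogonality follows directly from property (2) of Theorem \ref{Proposition 4}: if $(\blam(1),\ldots,\blam(n-1)) <_{pl} (\blam(1)',\ldots,\blam(n-1)')$, then $\Hom$ from the later functor's essential image into the earlier one's vanishes, and since $\Hom$'s between objects of the generated triangulated subcategories are built from $\Hom$'s of the generators by cones and shifts, the vanishing propagates to the whole subcategories.

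Next, I would verify admissibility. By Corollary \ref{Corollary 2}, each composite ${\F}_{1,\blam(1)}\cdots{\F}_{n-1,\blam(n-1)}\bo_\eta$ is fully faithful, so its essential image is equivalent to $\Kk(\eta)$ via the functor. The existence of left and right adjoints to each ${\F}_{i,s}\bo_{\kk}$ is built into Definition \ref{definition 2} (see condition (4)(b)), and adjoints compose; hence the whole composite has both a left and a right adjoint. This gives admissibility of the inclusion $\textnormal{Im}\,{\F}_{1,\blam(1)}\cdots{\F}_{n-1,\blam(n-1)}\bo_\eta \hookrightarrow \Kk(\kk)$ via the standard recipe (the left, resp.\ right, adjoint of the inclusion is obtained by applying the composite functor to the corresponding adjoint of the original functor).

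With semiorthogonality and admissibility in hand, Lemma \ref{lemma 2} yields a semiorthogonal decomposition in which the explicit pieces are $\textnormal{Im}\,{\F}_{1,\blam(1)}\cdots{\F}_{n-1,\blam(n-1)}\bo_\eta$ and the remaining component is precisely the left orthogonal to the generated subcategory, which by definition is $\Aa(\kk)$. I do not expect a serious obstacle here, since all the real work has already been done in Theorem \ref{Proposition 4}; the only care needed is the bookkeeping to extend the $\Hom$ vanishing from generating objects to the triangulated subcategories they generate, and the verification that admissibility of an individual fully faithful functor with both adjoints implies admissibility of its essential image as a subcategory.
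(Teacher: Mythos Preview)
Your proposal is correct and follows essentially the same approach as the paper: the paper omits the proof of Corollary \ref{Theorem 5} entirely, declaring it identical to that of Corollary \ref{Theorem 4}, and your outline is precisely the $\SL_n$ adaptation of that argument (semiorthogonality from property (2) of Theorem \ref{Proposition 4}, admissibility from existence of both adjoints, then Lemma \ref{lemma 2}). One small slip: the complement $\Aa(\kk)$ sitting to the left in the SOD is the \emph{right} orthogonal $\langle\ldots\rangle^{\perp}$, not the left orthogonal, as you can see from applying Lemma \ref{lemma 2} with $i=0$ and from the definition of $\Aa(\kk)$ in the statement itself.
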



\section{Application to Grassmannian of coherent sheaves}

Since the Kapranov exceptional collection is well-known, we would like to see examples (besides the usual Grassmanninas and partial flag varieties) where we can apply the above results to get $\SOD$s. Also, we believe that our result about the categorical action of $\dot{\Uu}_{0,N}(L\SL_{2})$ on $\bigoplus_{k}\Dd^b(\Gr(k,\CC^N))$ can be easily extended to a categorical action of $\dot{\Uu}_{0,N}(L\SL_{2})$ on Grassmannian bundles, i.e. $\bigoplus_{k}\Dd^b(\Gr(k,\Ee))$ where $\Ee$ is a locally free sheaf of rank $N$ on a smooth projective variety.

Thus the next possible example we consider is a generalization to Grassmannians (more precisely, relative Quot schemes) of coherent sheaves of homological dimension $\leq 1$.  Let $X$ be a connected smooth projective variety and $\GGG$ be a coherent sheaf on $X$ of homological dimension $\leq 1$, i.e. such that there is a locally free resolution $0 \rightarrow \Ee^{-1} \rightarrow \Ee^{0} \rightarrow \GGG \rightarrow 0$. In order to reduce the notations, we still denote the rank of $\GGG$ to be $N \coloneqq \text{rk}\Ee^{0}-\text{rk}\Ee^{-1} \geq 2$. Then we consider the Grassmannians/relative Quot scheme $\Gr(\GGG, k)$ of rank $k$ locally free quotients of $\GGG$. By Proposition A.1 in \cite{AT}, $\Gr(\GGG,k)$ is smooth, thus we can consider its bounded derived category of coherent sheaves $\Dd^b(\Gr(\GGG,k))$.

In this section, we apply the main results (in particular Theorem \ref{Proposition 3} and \ref{Theorem 6}) to obtain a $\SOD$ on $\Dd^b(\Gr(\GGG,k))$. We prove it by defining certain functors ${\E}_{r}\bo_{(N-k,k)}$, ${\F}_{s}\bo_{(N-k,k)}$ via using a correspondence like diagram (\ref{diag 1}) and show that they satisfy the conditions in Definition \ref{definition 2}.

Consider the following correspondence 
\begin{equation} \label{diag 2}
	\xymatrix{ 
		&&\Fl(\GGG,k+1,k)=\{\GGG \overset{N-k-1}{\twoheadrightarrow} \W' \overset{1}{\twoheadrightarrow} \W \overset{k}{\rightarrow} 0  \} 
		\ar[ld]_{p_1} \ar[rd]^{p_2}   \\
		& \Gr(\GGG,k)  && \Gr(\GGG,k+1)
	}
\end{equation}  where $\Fl(\GGG,k+1,k)$ is the 3-step partial flag variety which parametrizing successive locally free quotients of rank $k+1$, $k$ of the coherent sheaf $\GGG$. The numbers above the arrow indicate the decreasing of ranks, and $p_1$, $p_2$ are the natural projections.

Let $\pi:\Gr(\GGG,k) \rightarrow X$ be the natural projection. Then $\Gr(\GGG,k)$ carries the following tautological short exact sequence
\begin{equation} \label{ses1} 
	0 \rightarrow \Ss_{\pi} \rightarrow \pi^*\GGG \rightarrow \Qq_{\pi} \rightarrow 0
\end{equation} where $\Qq_{\pi}$ is the universal quotient bundle whereas the universal subsheaf $\Ss_{\pi}$ may not be locally free. Similarly for $\pi':\Gr(\GGG,k+1) \rightarrow X$. Pulling back the universal quotient bundles $\Qq_{\pi}$, $\Qq_{\pi'}$ in the correspondence (\ref{diag 2}), we obtain the surjective morphism $\rho: p^{*}_{2}\Qq_{\pi'} \twoheadrightarrow p^{*}_{1}\Qq_{\pi}$ of tautological quotient bundles on $\Fl(\GGG,k+1,k)$. Taking the kernel we get the line bundle, denoted by $\ker(\rho)$, on $\Fl(\GGG,k+1,k)$. Then we define the following functors
\begin{equation*}
	 {\E}_{r}\bo_{(N-k,k)} \coloneqq p_{2*}(p^{*}_1 \otimes (\ker(\rho))^{r} ):\Dd^b(\Gr(\GGG,k)) \rightarrow \Dd^b(\Gr(\GGG,k+1))
\end{equation*} with $-N+k \leq r \leq 0$ and similarly for ${\F}_{s}\bo_{(N-k,k)}$ with $0 \leq s \leq k$ in the opposite direction. 

To check that the functors defined above satisfy the conditions in Definition \ref{definition 2}, it remains to define the functors ${\spi}^{\pm}\bo_{(N-k,k)}$ on $\Dd^b(\Gr(\GGG,k))$.  There are two natural determinant line bundle on $\Gr(\GGG,k)$, one is $\det(\Qq_{\pi})$, the other is $\det(\Ss_{\pi})$. Here we use the tautological short exact sequence (\ref{ses1}) and $\GGG$ admits a two-term locally free resolution to get 
\begin{equation*}
	\det(\Ss_{\pi}) \cong \pi^*\det(\GGG) \otimes \det(\Qq_{\pi})^{-1} \cong \pi^*(\det(\Ee^{0}) \otimes\det(\Ee^{-1})^{-1}) \otimes  \det(\Qq_{\pi})^{-1}.
\end{equation*} Then we define 
\begin{align*}
	&{\spi}^{+}\bo_{(N-k,k)} \coloneqq \otimes \det(\Qq_{\pi})[1-k]  :\Dd^b(\Gr(\GGG,k)) \rightarrow \Dd^b(\Gr(\GGG,k)), \\
	&{\spi}^{-}\bo_{(N-k,k)} \coloneqq \otimes \det(\Ss_{\pi})^{-1}[1+k-N]  :\Dd^b(\Gr(\GGG,k)) \rightarrow \Dd^b(\Gr(\GGG,k)).
\end{align*} It is easy to see that both the functors ${\spi}^{+}$, ${\spi}^{-}$ are invertible.
 
Then we state the main result of this section. 

\begin{theorem} \label{Theorem 7}
The functors  ${\E}_{r}\bo_{(N-k,k)}$, ${\F}_{s}\bo_{(N-k,k)}$, ${\spi}^{\pm}\bo_{(N-k,k)}$  defined above gives a partial categorical $\dot{\Uu}_{0,N}(L\SL_{2})$ action. In particular, this implies that $\Dd^b(\Gr(\GGG,k))$ admits the following two $\SOD$s
\begin{align}
	\Dd^b(\Gr(\GGG,k)) &= \langle \Aa(N-k,k), \ \textnormal{Im}{\F}_{\blam}\bo_{(0,N)}  \rangle_{\blam \in P(k,N-k)}\label{sodf} \\
	&=\langle \Bb(N-k,k), \ \textnormal{Im}{\E}_{-\bmu}\bo_{(N,0)} \rangle_{\bmu \in P(N-k,k)}\label{sode} 
\end{align} where $\Aa(N-k,k):=\langle \textnormal{Im}{\F}_{\blam}\bo_{(0,N)} \rangle_{\blam \in P(k,N-k)}^{\perp}$, $\Bb(N-k,k):=\langle \textnormal{Im}{\E}_{-\bmu}\bo_{(N,0)} \rangle_{\bmu \in P(N-k,k)}^{\perp}$ are the orthogonal complements.
\end{theorem}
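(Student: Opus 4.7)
The plan is to verify that the data $({\E}_{r}, {\F}_{s}, {\spi}^{\pm}, \bo_{(N-k,k)})$ satisfies all six conditions of Definition \ref{definition 2} with weight categories $\Kk(N-k,k) := \Dd^b(\Gr(\GGG,k))$; once this is done, the two semiorthogonal decompositions (\ref{sodf}) and (\ref{sode}) follow directly by applying Corollary \ref{Theorem 4} and Corollary \ref{corollary} to this partial categorical $\dot{\Uu}_{0,N}(L\SL_{2})$ action.

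The geometric heart of the verification is a pair of projective bundle descriptions of $\Fl(\GGG,k+1,k)$. Sending a flag $\GGG \twoheadrightarrow \W' \twoheadrightarrow \W$ to the rank-one quotient $\W' \twoheadrightarrow \W$ identifies $p_{2}: \Fl(\GGG,k+1,k) \to \Gr(\GGG,k+1)$ with the classical projective bundle $\PP_{\quo}(\Qq_{\pi'})$. Sending the same flag instead to the induced line-bundle quotient $\Ss_{\pi} \twoheadrightarrow \ker(\W' \twoheadrightarrow \W)$ of the universal subsheaf identifies $p_{1}$ with the generalized projectivization $\PP_{\quo}(\Ss_{\pi})$ in the sense of Proposition \ref{Proposition}; this uses that $\Ss_{\pi}$ has rank $N-k$ and admits a two-term locally free resolution inherited from the resolution of $\GGG$ together with the tautological sequence (\ref{ses1}). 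Under both descriptions the line bundle $\ker(\rho)$ is the tautological $\Oo(-1)$.

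With these identifications, conditions (1)--(3) of Definition \ref{definition 2} are immediate: finiteness of Homs from the smoothness and projectivity of $\Gr(\GGG,k)$, together with the obvious vanishing $\bo_{(N-k,k)} = 0$ when $k < 0$ or $k > N$. Conditions (4) and (5) follow from Proposition \ref{Proposition 1} combined with an explicit computation of the relative canonical sheaves $\omega_{p_{1}}$ and $\omega_{p_{2}}$ from the two projective bundle descriptions. The determinantal identity $\det(\Ss_{\pi}) \cong \pi^{*}(\det(\Ee^{0}) \otimes \det(\Ee^{-1})^{-1}) \otimes \det(\Qq_{\pi})^{-1}$ is what lets one repackage these canonical sheaves as compositions of powers of $\spi^{\pm}$ and $\ker(\rho)$, matching precisely the formulas in Definition \ref{definition 2}(4)(5).

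The main obstacle is condition (6) on the commutator triangles between $\E_{r}\F_{s}$ and $\F_{s}\E_{r}$. The strategy, mirroring the classical Grassmannian case, is to compute both compositions by base change along the fibered products
\[
\Fl(\GGG,k+1,k) \times_{\Gr(\GGG,k+1)} \Fl(\GGG,k+1,k) \quad \text{and} \quad \Fl(\GGG,k,k-1) \times_{\Gr(\GGG,k)} \Fl(\GGG,k+1,k),
\]
and to decompose each fibered product into its diagonal and a residual component. The diagonal contribution produces a shifted twist of $\bo_{(N-k,k)}$ that one recognises as a power of $\spi^{+}$ or $\spi^{-}$, while the residual contribution is pushed forward fiberwise using Proposition \ref{Proposition}. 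The three trichotomous outputs of Zhao's formula (nonnegative exponent, vanishing intermediate range, and negative exponent below the rank) correspond respectively to the cases $r+s = k$, $-(N-k)+1 \leq r+s \leq k-1$, and $r+s = -(N-k)$ in Definition \ref{definition 2}(6). Verifying that this mechanism produces the required exact triangles in the non-locally-free setting is the technical heart of the argument, although once the projective bundle descriptions above are in place the remaining computation is a faithful transcription of the classical case for ordinary Grassmannians.
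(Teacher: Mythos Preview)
Your treatment of conditions (1)--(5) matches the paper's: Lemma \ref{lemma 3} is exactly the Grothendieck duality computation you describe, and Lemma \ref{lemma 4} is the base-change argument. The second fibered product you wrote, $\Fl(\GGG,k,k-1) \times_{\Gr(\GGG,k)} \Fl(\GGG,k+1,k)$, is a slip: the space computing $\Ee_{r}\ast\Ff_{s}\bo_{(N-k,k)}$ is $\Zz' = \Fl(\GGG,k,k-1)\times_{\Gr(\GGG,k-1)}\Fl(\GGG,k,k-1)$.

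For condition (6) your mechanism diverges from the paper's, and as stated it has a gap. You propose to decompose each of $\Zz$ and $\Zz'$ separately into a diagonal and a residual piece, with the diagonal supplying the $\spi^{\pm}$ term. But a diagonal/residual decomposition of $\Zz$ alone yields a triangle involving $\Ff_{s}\Ee_{r}$, and likewise for $\Zz'$ and $\Ee_{r}\Ff_{s}$; it does not produce a comparison \emph{map} between $\Ee_{r}\Ff_{s}$ and $\Ff_{s}\Ee_{r}$, nor does it explain why their residual pieces agree. The paper instead first shows $\Ee_{0}\Ff_{0}\cong\Ff_{0}\Ee_{0}$ directly (both push down to $t_{*}\Oo_{\Yy}$ for a common $\Yy\subset\Gr(\GGG,k)^{2}$), then introduces the single larger space $\Xx=\Zz\times_{\Yy}\Zz'$. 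On $\Xx$ there are four tautological quotient bundles and a natural map of line bundles $\ker(\rho_{2})\to\ker(\rho_{3})$ whose cokernel is supported on a divisor $D$. The resulting exact sequence $0\to\ker(\rho_{2})^{s}\to\ker(\rho_{3})^{s}\to\Oo_{sD}\otimes\ker(\rho_{3})^{s}\to 0$ pushes down to the desired triangle, with the third term computed by Propositions \ref{proposition 3} and \ref{Proposition} after peeling off copies of $D$ via (\ref{et2}). This is what produces both the comparison morphism and the identification of the cone in one stroke; your diagonal/residual sketch would need an additional argument to bridge the two sides.
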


To prove this theorem, note that all the functors ${\E}_{r}\bo_{(N-k,k)}$, ${\spi}^{\pm}\bo_{(N-k,k)}$ are FM transformations and the corresponding FM kernels are 
\begin{align*}
	{\Ee}_{r}\bo_{(N-k,k)} &\coloneqq \iota_{*}\ker(\rho)^r \in \Dd^b(\Gr(\GGG,k) \times \Gr(\GGG,k+1)), \\
	\bo_{(N-k,k)}{\Ff}_{s} &\coloneqq ^{\TTt}\iota_{*}\ker(\rho)^r \in \Dd^b(\Gr(\GGG,k+1) \times \Gr(\GGG,k)), \\
	{\Psi}^{+}\bo_{(N-k,k)}&\coloneqq \Delta_{*}\det(\Qq_{\pi})[1-k] \in \Dd^b(\Gr(\GGG,k) \times \Gr(\GGG,k)), \\
	{\Psi}^{-}\bo_{(N-k,k)}&\coloneqq \Delta_{*}\det(\Ss_{\pi})^{-1}[1+k-N] \in \Dd^b(\Gr(\GGG,k) \times \Gr(\GGG,k)),  
\end{align*} respectively where $\iota:\Fl(\GGG,k+1,k) \rightarrow \Gr(\GGG,k) \times \Gr(\GGG,k+1)$ is the natural inclusion, $^{\TTt}\iota:\Fl(\GGG,k+1,k) \rightarrow \Gr(\GGG,k+1) \times \Gr(\GGG,k)$ is the transpose inclusion, and $\Delta:\Gr(\GGG,k) \rightarrow \Gr(\GGG,k) \times \Gr(\GGG,k)$ is the diagonal map. Thus we prove Theorem \ref{Theorem 7} by checking the conditions in Definition \ref{definition 2} at the level of FM kernels.

The first is condition (4).

\begin{lemma}(condition (4)) \label{lemma 3}
The right adjoints of ${\Ee}_{r}\bo_{(N-k,k)}$ and ${\Ff}_{s}\bo_{(N-k,k)}$ are given by the following 
\begin{align*}
	({\Ee}_{r}\bo_{(N-k,k)})_{R} &\cong \bo_{(N-k,k)}{({\Psi}^{+})^{r+1}} \ast {\Ff}_{k+2} \ast ({\Psi}^{+})^{-r-2}[-r-1], 
	\\	({\Ff}_{s}\bo_{(N-k,k)})_{R} &\cong \bo_{(N-k,k)}{({\Psi}^{-})^{-s+1}} \ast {\Ee}_{-N+k-2} \ast ({\Psi}^{-})^{s-2}[s-1].
\end{align*} 
\end{lemma}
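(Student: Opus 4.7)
The plan is to verify both identities directly at the level of Fourier--Mukai kernels, by using Proposition~\ref{Proposition 1} to compute the right adjoint and then matching the result, line bundle by line bundle, against the convolution on the right-hand side. I describe the argument for the first formula; the second is proved identically, with $\det(\Ss_\pi)^{-1}$ in place of $\det(\Qq_\pi)$.

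First, applying Proposition~\ref{Proposition 1} to $\Ee_r\bo_{(N-k,k)}=\iota_*\ker(\rho)^r$ yields
\begin{equation*}
(\Ee_r\bo_{(N-k,k)})_R \cong (\iota_*\ker(\rho)^r)^\vee \otimes \pi_1^*\omega_{\Gr(\GGG,k)}[\tdim\Gr(\GGG,k)].
\end{equation*}
All of $\Gr(\GGG,k)$, $\Gr(\GGG,k+1)$ and $\Fl(\GGG,k+1,k)$ are smooth (the latter because $p_2$ realises it as the $\PP^k$-bundle $\PP_{\sub}(\Qq_{\pi'})$), so Grothendieck duality for the closed immersion $\iota$ reduces the right-hand side to $\iota_*$ of an explicit line bundle $\ker(\rho)^{-r}\otimes\det N_\iota\otimes p_1^*\omega_{\Gr(\GGG,k)}$ with homological shift $\tdim\Gr(\GGG,k)-c$, where $N_\iota$ and $c$ denote the normal bundle and codimension of $\iota$.

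Next I would identify these data geometrically. The projection $p_1$ is $\PP_{\quo}(\Ss_\pi)$, with tautological quotient line bundle $\ker(\rho)$ (well-defined via Zhao's generalised projective bundle formula, Proposition~\ref{Proposition}, since $\Ss_\pi$ has homological dimension~$\leq 1$), and $p_2$ is $\PP_{\sub}(\Qq_{\pi'})$, with tautological line subbundle $\ker(\rho)$. The Euler sequence of $p_2$ yields an explicit formula for $\omega_{\Fl/\Gr(\GGG,k+1)}$ in terms of $\ker(\rho)$ and $p_2^*\det(\Qq_{\pi'})$, while the bundles $\omega_{\Gr(\GGG,m)}$ for $m\in\{k,k+1\}$ are obtained by realising each $\Gr(\GGG,m)$ as the zero locus, inside the honest Grassmannian bundle $\Gr(\Ee^0,m)$, of the section of $(\pi^*\Ee^{-1})^\vee$ tensored with the universal quotient bundle that is induced by the map $\Ee^{-1}\to\Ee^0$, and then applying the adjunction formula. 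From these, $\det N_\iota=\omega_{\Fl}\otimes(p_1^*\omega_{\Gr(\GGG,k)}\otimes p_2^*\omega_{\Gr(\GGG,k+1)})^{-1}$ is determined as an explicit monomial in $\ker(\rho)$, $p_1^*\det(\Qq_\pi)$, $p_2^*\det(\Qq_{\pi'})$, and pullbacks from $X$.

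Finally, since the kernels $(\Psi^+)^{\pm 1}\bo_\kk$ are supported on the diagonal and act by tensoring with explicit line bundles, the convolution $(\Psi^+)^{r+1}\ast\Ff_{k+2}\ast(\Psi^+)^{-r-2}[-r-1]$ collapses, via the projection formula, to ${}^{\TTt}\iota_*$ of $\ker(\rho)^{k+2}\otimes p_1^*\det(\Qq_\pi)^{r+1}\otimes p_2^*\det(\Qq_{\pi'})^{-r-2}$ with a computable total shift. Using the identity $\ker(\rho)\cong p_2^*\det(\Qq_{\pi'})\otimes p_1^*\det(\Qq_\pi)^{-1}$ coming from the short exact sequence $0\to\ker(\rho)\to p_2^*\Qq_{\pi'}\to p_1^*\Qq_\pi\to 0$, one sees that the $r$-dependence on both sides already matches, reducing the claim to an $r$-independent comparison of line bundles that follows mechanically from the formulas above. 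The main obstacle will be the bookkeeping introduced by the failure of $\Ss_\pi$ to be locally free: every canonical bundle and every computation of $\det N_\iota$ picks up twists by $\pi^*(\det\Ee^0\otimes(\det\Ee^{-1})^{-1})$ from the resolution of $\GGG$, and these contributions must cancel cleanly between the two sides.
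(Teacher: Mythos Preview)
Your approach is correct and follows the same skeleton as the paper's proof: apply Proposition~\ref{Proposition 1}, use Grothendieck duality for the closed immersion $\iota$, and then match line bundles. The difference is only in efficiency.

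You plan to compute $\det N_\iota$ by working out each of $\omega_{\Fl(\GGG,k+1,k)}$, $\omega_{\Gr(\GGG,k)}$, $\omega_{\Gr(\GGG,k+1)}$ separately via the zero-locus description inside honest Grassmannian bundles, and you correctly anticipate that this will introduce twists by $\pi^*(\det\Ee^0\otimes(\det\Ee^{-1})^{-1})$ that must cancel at the end. The paper sidesteps all of this bookkeeping by observing, \emph{before} computing any absolute canonical bundle, that the combination actually needed is
\[
\det N_\iota \otimes p_1^*\omega_{\Gr(\GGG,k)} \;=\; \omega_{\Fl(\GGG,k+1,k)} \otimes p_2^*\omega_{\Gr(\GGG,k+1)}^{-1} \;=\; \omega_{p_2},
\]
the relative canonical bundle of $p_2$. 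Since $p_2$ realises $\Fl(\GGG,k+1,k)$ as the genuine projective bundle $\PP_{\sub}(\Qq_{\pi'})$, its Euler sequence gives $\omega_{p_2}\cong\ker(\rho)^{k}\otimes p_1^*\det(\Qq_\pi)^{-1}$ immediately, with no reference to the resolution of $\GGG$. From there the identification with $(\Psi^+)^{r+1}\ast\Ff_{k+2}\ast(\Psi^+)^{-r-2}[-r-1]$ is exactly the line-bundle rewriting via $\ker(\rho)\cong p_2^*\det(\Qq_{\pi'})\otimes p_1^*\det(\Qq_\pi)^{-1}$ that you already describe. So the ``main obstacle'' you flag in your last sentence simply does not arise.
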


\begin{proof}
We prove the case for $\Ee$, the other is similar.  From Proposition \ref{Proposition 1}, we know that 
\begin{equation} \label{rad}
	({\Ee}_{r}\bo_{(N-k,k)})_{R} = \{\iota_{*}\ker(\rho)^r \}^{\vee} \otimes \pi^{*}_{1}\omega_{\Gr(\GGG,k)} [\dim \Gr(\GGG,k)]  
\end{equation} and a standard calculation shows that 
\begin{equation*}
 \{\iota_{*}\ker(\rho)^r \}^{\vee}	\cong \iota_{*}(\ker(\rho)^{-r} \otimes \omega_{\Fl(\GGG,k+1,k)}) \otimes \omega^{-1}_{\Gr(\GGG,k) \times \Gr(\GGG,k+1)}[\dim \Fl(\GGG,k+1,k)-\dim \Gr(\GGG,k) \times \Gr(\GGG,k+1)].
\end{equation*}

An easy calculation shows that $\dim \Fl(\GGG,k+1,k)=\dim X + (N-k)k+N-k-1$ and $\dim \Gr(\GGG,k)=\dim X + (N-k)k$. Thus (\ref{rad}) becomes 
\begin{equation} \label{rad2}
\iota_{*}(\ker(\rho)^{-r} \otimes \omega_{\Fl(\GGG,k+1,k)}) \otimes \pi^{*}_{2}\omega^{-1}_{\Gr(\GGG,k+1)}  [k].
\end{equation} Using the projection $p_2:\Fl(\GGG,k+1,k) \rightarrow \Gr(\GGG,k+1)$, we have $ \omega_{\Fl(\GGG,k+1,k)} \cong  \omega_{p_2} \otimes p^{*}_{2}\omega_{\Gr(\GGG,k+1)}$, where $\omega_{p_2}$ is the relative canonical bundle. Since $\pi_{2} \circ \iota = p_2$, (\ref{rad2}) becomes  $\iota_{*}(\ker(\rho)^{-r} \otimes \omega_{p_2})[k]$.

To calculate $\omega_{rel}$, from the following short exact sequence
\begin{equation*}
	0 \rightarrow \ker(\rho) \rightarrow p^{*}_{2}\Qq_{\pi'} \overset{\rho}{\twoheadrightarrow} p^{*}_{1}\Qq_{\pi} \rightarrow 0
\end{equation*} we know that the relative cotangent bundle is given by  $\Omega_{p_2} \cong \ker(\rho) \otimes  p^{*}_{1}\Qq_{\pi}^{\vee} $. Thus $\omega_{p_2} = \bigwedge^{top} \Omega_{p_2} \cong \ker(\rho)^{k} \otimes p^{*}_1\det(\Qq_{\pi})^{-1}$. Since $\ker(\rho) \cong p^*_{2}\det(\Qq_{\pi'}) \otimes p^*_{1}\det(\Qq_{\pi})^{-1}$, we obtain
\begin{align*}
	\iota_{*}(\ker(\rho)^{-r} \otimes \omega_{p_2})[k] &\cong \iota_{*}(\ker(\rho)^{k-r} \otimes p^{*}_1\det(\Qq_{\pi})^{-1})[k] \\
	& \cong \iota_{*}(\ker(\rho)^{k+2} \otimes  p^*_{2}\det(\Qq_{\pi'})^{-r-2} \otimes p^{*}_1\det(\Qq_{\pi})^{r+1})[k] \\
	& \cong  \iota_{*}(\ker(\rho)^{k+2}) \otimes  \pi^*_{2}\det(\Qq_{\pi'})^{-r-2}[(-r-2)(-k)] \otimes \pi^{*}_{1}\det(\Qq_{\pi})^{r+1}[(r+1)(1-k)][-r-1]
\end{align*} which is isomorphic to $\bo_{(N-k,k)}{({\Psi}^{+})^{r+1}} \ast {\Ff}_{k+2} \ast ({\Psi}^{+})^{-r-2}[-r-1]$ from the definition of the kernels 
${\Psi}^{+}\bo_{(N-k,k)}$.
\end{proof}

The next is condition (5) where the proof is simply used base-change, so we leave it to the readers.

\begin{lemma}(condition (5)) \label{lemma 4}
We have the following isomorphisms of FM kernels.
\begin{align*}
	(\Psi^{\pm} \ast \Ee_{r})\bo_{(N-k,k)} &\cong (\Ee_{r+1} \ast \Psi^{\pm})\bo_{(N-k,k)} [\mp 1] ,\\
	(\Psi^{\pm} \ast \Ff_{s})\bo_{(N-k,k)} &\cong (\Ff_{s-1} \ast \Psi^{\pm})\bo_{(N-k,k)} [\pm 1].
\end{align*}
\end{lemma}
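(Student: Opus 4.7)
The plan is to verify all four isomorphisms by direct computation at the level of Fourier--Mukai kernels on the relevant product spaces, reducing everything via flat base change to sheaves supported on the correspondence $\Fl(\GGG,k+1,k)$. The key algebraic ingredients will be the projection formula together with two canonical identifications of the line bundle $\ker(\rho)$ in terms of determinant line bundles.

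First I would record the two basic linear-algebra identities on $\Fl(\GGG,k+1,k)$ coming from the tautological sequences:
\begin{align*}
\ker(\rho) &\cong p_{2}^{*}\det(\Qq_{\pi'}) \otimes p_{1}^{*}\det(\Qq_{\pi})^{-1}, \\
\ker(\rho) &\cong p_{1}^{*}\det(\Ss_{\pi}) \otimes p_{2}^{*}\det(\Ss_{\pi'})^{-1}.
\end{align*}
The first is immediate from the defining short exact sequence $0 \to \ker(\rho) \to p_2^{*}\Qq_{\pi'} \to p_1^{*}\Qq_{\pi} \to 0$ by taking determinants. The second comes from the dual sequence on subsheaves $0 \to p_{2}^{*}\Ss_{\pi'} \to p_{1}^{*}\Ss_{\pi} \to \ker(\rho) \to 0$ (whose existence follows from the snake lemma applied to the two tautological sequences (\ref{ses1}) on $\Gr(\GGG,k)$ and $\Gr(\GGG,k+1)$ pulled back to $\Fl(\GGG,k+1,k)$); although $\Ss_{\pi}, \Ss_{\pi'}$ are not locally free, their determinants are well defined through the resolution $\Ee^{-1} \to \Ee^{0} \to \GGG \to 0$, so that identity is licit.

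Next I would compute both sides of the first isomorphism as sheaves on $\Gr(\GGG,k) \times \Gr(\GGG,k+1)$. For the left-hand side $\Psi^{+} \ast \Ee_{r}$, flat base change along the diagonal $\Delta$ (valid because $\Delta$ is a closed immersion and $\pi_{23}$ is flat) together with the projection formula for $\iota_*$ collapses the triple-product convolution to
\[
\Psi^{+} \ast \Ee_{r}\bo_{(N-k,k)} \;\cong\; \iota_{*}\bigl(\ker(\rho)^{r}\otimes p_{2}^{*}\det(\Qq_{\pi'})\bigr)[-k].
\]
An identical argument gives
\[
\Ee_{r+1}\ast \Psi^{+}\bo_{(N-k,k)} \;\cong\; \iota_{*}\bigl(\ker(\rho)^{r+1}\otimes p_{1}^{*}\det(\Qq_{\pi})\bigr)[1-k],
\]
and the first determinant identity above collapses $\ker(\rho)^{r+1}\otimes p_{1}^{*}\det(\Qq_{\pi})$ to $\ker(\rho)^{r}\otimes p_{2}^{*}\det(\Qq_{\pi'})$. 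Comparing the two shifts yields $\Psi^{+}\ast \Ee_{r}\cong \Ee_{r+1}\ast \Psi^{+}[-1]$, which is the $\Psi^{+}$ case of the $\Ee$-assertion. The $\Psi^{-}$ case is entirely analogous, except one invokes the second determinant identity (involving $\Ss_{\pi}, \Ss_{\pi'}$), and the shifts from $[1+k-N]$ on $\Gr(\GGG,k)$ versus $[2+k-N]$ on $\Gr(\GGG,k+1)$ combine to give the required $[+1]$.

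Finally, the $\Ff_{s}$ cases follow from the same template, with the inclusion $\iota$ replaced by the transpose inclusion $^{\TTt}\iota$ and the roles of $p_{1}$ and $p_{2}$ swapped; the two determinant identities are used in exactly the opposite order, which flips the direction of the shift and yields the $[\pm 1]$ instead of $[\mp 1]$. The main bookkeeping obstacle will be correctly matching the homological shifts coming from the $[1-k]$, $[-k]$, $[1+k-N]$, and $[2+k-N]$ factors attached to the $\Psi^{\pm}$ kernels at weights $(N-k,k)$ and $(N-k-1,k+1)$; these must cancel precisely against the shift created when transferring a tensor factor from a $p_{1}^{*}$ to a $p_{2}^{*}$ via the determinant identities. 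Everything else is routine application of flat base change and the projection formula.
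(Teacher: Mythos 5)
Your proposal is correct and is exactly the argument the paper intends but omits (the paper dismisses this lemma with "the proof is simply used base-change, so we leave it to the readers"): convolution with the diagonal kernels $\Psi^{\pm}$ reduces, via base change and the projection formula, to tensoring $\iota_{*}\ker(\rho)^{r}$ (resp. $^{\TTt}\iota_{*}\ker(\rho)^{s}$) by $p_{1}^{*}$ or $p_{2}^{*}$ of the appropriate determinant line bundle, and the two identities $\ker(\rho)\cong p_{2}^{*}\det(\Qq_{\pi'})\otimes p_{1}^{*}\det(\Qq_{\pi})^{-1}$ (stated explicitly in the paper's proof of Lemma \ref{lemma 3}) and $\ker(\rho)\cong p_{1}^{*}\det(\Ss_{\pi})\otimes p_{2}^{*}\det(\Ss_{\pi'})^{-1}$ (which also follows from the first via $\det(\Ss)\cong\pi^{*}\det(\GGG)\otimes\det(\Qq)^{-1}$, sidestepping any worry about determinants of non-locally-free sheaves) convert one side into the other. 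I verified your shift bookkeeping — e.g.\ $[-k]$ versus $[1-k]-[1]$ for the $\Psi^{+}$, $\Ee$ case and $[2+k-N]$ versus $[1+k-N]+[1]$ for the $\Psi^{-}$, $\Ee$ case — and it matches the signs $[\mp 1]$, $[\pm 1]$ in the statement.
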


Finally, we prove the most important condition, which is condition (6).

\begin{theorem} \label{Theorem 8} 
We have the following two exact triangles in $\Dd^b(\Gr(\GGG,k) \times \Gr(\GGG,k))$
\begin{align}
	& (\Ff_{k} \ast \Ee_{0})\bo_{(N-k,k)} \rightarrow (\Ee_{0} \ast \Ff_{k})\bo_{(N-k,k)} \rightarrow \Psi^{+}\bo_{(N-k,k)}, \label{eta} \\
	& (\Ee_{-N+k} \ast \Ff_{0})\bo_{(N-k,k)} \rightarrow (\Ff_{0} \ast \Ee_{-N+k})\bo_{(N-k,k)} \rightarrow \Psi^{-}\bo_{(N-k,k)}, \label{etb}
\end{align} and $(\Ee_{r} \ast \Ff_{s})\bo_{(N-k,k)} \cong (\Ff_{s} \ast \Ee_{r})\bo_{(N-k,k)}$ if $1+k-N \leq r+s \leq k-1$.
\end{theorem}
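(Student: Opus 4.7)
The plan is to compute both convolutions $(\Ff_s \ast \Ee_r)\bo_{(N-k,k)}$ and $(\Ee_r \ast \Ff_s)\bo_{(N-k,k)}$ directly on the appropriate fiber products, then apply Yu Zhao's generalized projective bundle formula (Proposition~\ref{Proposition}) to extract the exact triangles. This is the standard template used in \cite{CKL1} and in \cite{Hsu} for the partial flag variety case; the only genuinely new ingredient is that the universal subsheaf on $\Gr(\GGG,k-1)$ is merely of homological dimension $\le 1$, which is precisely the situation where Proposition~\ref{Proposition} was designed to apply.

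First, I would reduce each composition to a pushforward from a fiber product. Using base change, $(\Ff_s \ast \Ee_r)\bo_{(N-k,k)}$ is the pushforward along $\pi_{13}$ from
\[
W_+ \coloneqq \Fl(\GGG,k+1,k) \times_{\Gr(\GGG,k+1)} \Fl(\GGG,k+1,k)
\]
of $L_1^r \otimes L_2^s$, where $L_1, L_2$ are the two copies of $\ker(\rho)$ pulled back from the two factors. Since $\Fl(\GGG,k+1,k) \cong \PP_{\sub}(\Qq_{\pi'})$ is an honest projective bundle on the locally free rank $k+1$ quotient $\Qq_{\pi'}$, the space $W_+$ is a $\PP^k\times\PP^k$-bundle over $\Gr(\GGG,k+1)$ and the intersection is Tor-independent. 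Similarly $(\Ee_r \ast \Ff_s)\bo_{(N-k,k)}$ comes from
\[
W_- \coloneqq \Fl(\GGG,k,k-1) \times_{\Gr(\GGG,k-1)} \Fl(\GGG,k,k-1),
\]
whose fibers are products of two copies of $\PP_{\quo}$ of the universal rank $(N-k+1)$ subsheaf on $\Gr(\GGG,k-1)$; this subsheaf has homological dimension $\le 1$ but need not be locally free, so Proposition~\ref{Proposition} rather than Proposition~\ref{proposition 3} is the relevant tool here.

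Next I would decompose each fiber product along the diagonal. Inside $W_+$ lies the closed diagonal $\Delta_+ \cong \Fl(\GGG,k+1,k)$ where $L_1 = L_2$, with open complement $U_+$; there is a natural exact triangle of structure sheaves. Twisting by $L_1^r \otimes L_2^s$ and pushing forward to $\Gr(\GGG,k) \times \Gr(\GGG,k)$, the contribution from $U_+$ can be reorganized via another base change (reversing the two projections, and using the canonical isomorphism $W_+\setminus\Delta_+ \cong W_-\setminus\Delta_-$ over the open locus $\W_1 \ne \W_2$) as the opposite composition $(\Ee_r \ast \Ff_s)\bo_{(N-k,k)}$, while the contribution from $\Delta_+$ is supported on the actual diagonal of $\Gr(\GGG,k) \times \Gr(\GGG,k)$. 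Computing the latter via the projective bundle formula (Proposition~\ref{proposition 3}) applied to $\Fl(\GGG,k+1,k) \to \Gr(\GGG,k+1)$, followed by a pushforward from $\Gr(\GGG,k+1)$ down to $\Gr(\GGG,k)$, the twist $\det(\Qq_\pi)[1-k]$ defining $\Psi^+$ falls out exactly when $r+s = k$, and the contribution vanishes for $r+s \le k-1$. The symmetric argument on $W_-$, now invoking Proposition~\ref{Proposition} for the non-locally-free universal subsheaf, produces the $\Psi^-$ triangle when $r+s = -N+k$ and vanishes for $r+s \ge 1+k-N$. In the middle range $1+k-N \le r+s \le k-1$ both diagonal contributions vanish, and the comparison of the two $U_\pm$ contributions yields the desired isomorphism.

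The main obstacle, and the technical heart of the argument, will be the bookkeeping of line bundle twists and homological shifts in the $W_-$ case. The determinantal factor $\det(\Ss_\pi)^{-1}[1+k-N]$ defining $\Psi^-\bo_{(N-k,k)}$ must emerge from the third branch $\wedge^{-i-u}(\Uu^\vee)\otimes \det(\Uu)^{-1}[1-u]$ of Proposition~\ref{Proposition}, and matching this with the identity $\det(\Ss_\pi) \cong \pi^*\bigl(\det(\Ee^0)\otimes\det(\Ee^{-1})^{-1}\bigr)\otimes\det(\Qq_\pi)^{-1}$ coming from the two-term resolution of $\GGG$ requires tracking several ranks, signs, and shifts simultaneously. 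Once this matching is verified, all three statements of the theorem follow in parallel from the single diagonal-versus-complement decomposition of $W_\pm$.
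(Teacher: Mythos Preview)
Your diagonal-versus-complement decomposition of $W_{\pm}$ does not work as stated. The diagonal $\Delta_{+}\subset W_{+}$ has codimension $k$, not $1$: fiberwise it is the diagonal of $\PP^{k}\times\PP^{k}$. Hence the only ``exact triangle of structure sheaves'' available is $\II_{\Delta_{+}}\to\Oo_{W_{+}}\to\Oo_{\Delta_{+}}$, and the pushforward of $\II_{\Delta_{+}}\otimes L_{1}^{r}\otimes L_{2}^{s}$ is \emph{not} the opposite convolution $(\Ee_{r}\ast\Ff_{s})\bo_{(N-k,k)}$. A quick sanity check already breaks your claim: for $r=s=0$ both convolutions are isomorphic (to $t_{*}\Oo_{\Yy}$), yet the pushforward of $\Oo_{\Delta_{+}}$ along $\Delta_{+}=\Fl(\GGG,k+1,k)\to\Gr(\GGG,k)\hookrightarrow\Gr(\GGG,k)\times\Gr(\GGG,k)$ is $\Delta_{*}\Oo_{\Gr(\GGG,k)}\neq 0$. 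So your triangle cannot have the form $(\Ee_{0}\ast\Ff_{0})\to(\Ff_{0}\ast\Ee_{0})\to(\text{diagonal term})$ with the first arrow an isomorphism. The open identification $U_{+}\cong U_{-}$ is correct, but it does not by itself compare the pushforwards from the closed varieties $W_{+}$ and $W_{-}$.

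The paper's argument avoids this by passing to the larger fiber product $\Xx=\Zz\times_{\Yy}\Zz'$ (your $W_{+}\times_{\Yy}W_{-}$, roughly), which carries all four tautological quotients $\Qq'\twoheadrightarrow\Qq,\Qq''\twoheadrightarrow\Qq'''$. On $\Xx$ the locus $\{\W=\W''\}$ becomes an honest \emph{divisor} $D$, cut out by the section $s\colon\ker(\rho_{2})\to\ker(\rho_{3})$ induced by the commutative square of quotients. One then uses the short exact sequences $0\to\ker(\rho_{2})^{n}\to\ker(\rho_{3})^{n}\to\Oo_{nD}\otimes\ker(\rho_{3})^{n}\to 0$ and reduces $\Oo_{nD}$ iteratively to $\Oo_{D}$; only at this last step does the projective bundle formula (Proposition~\ref{proposition 3} for $\Psi^{+}$, Proposition~\ref{Proposition} for $\Psi^{-}$ since $\Ss$ is merely of homological dimension $\le 1$) enter. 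Your instinct that Zhao's formula is the new ingredient for the $\Psi^{-}$ side is right, but it is applied on $\Xx$, not on $W_{-}$.
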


To compare $(\Ee_{r} \ast \Ff_{s})\bo_{(N-k,k)}$ and $(\Ff_{s} \ast \Ee_{r})\bo_{(N-k,k)}$, similar to the proof of Proposition 5.11 in \cite{Hsu}, we divide the proof to two steps. The first step is to handle the case where $r=s=0$.

\begin{lemma}
$(\Ee_{0} \ast \Ff_{0})\bo_{(N-k,k)} \cong (\Ff_{0} \ast \Ee_{0})\bo_{(N-k,k)}$.
\end{lemma}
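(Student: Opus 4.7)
The plan is to compute both convolutions as pushforwards of structure sheaves from smooth fibre products and then identify the two via a common ``enhanced'' incidence variety, with the crucial inputs being flat base change and the generalised projective bundle formula (Proposition \ref{Proposition}).

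First, by flat base change applied to the Cartesian squares arising from the correspondence (\ref{diag 2}), one obtains
\begin{equation*}
(\Ff_0 \ast \Ee_0)\bo_{(N-k,k)} \cong (j_{W})_{*}\Oo_{W}, \qquad (\Ee_0 \ast \Ff_0)\bo_{(N-k,k)} \cong (j_{V})_{*}\Oo_{V},
\end{equation*}
where $W := \Fl(\GGG,k+1,k) \times_{\Gr(\GGG,k+1)} \Fl(\GGG,k+1,k)$ and $V := \Fl(\GGG,k,k-1) \times_{\Gr(\GGG,k-1)} \Fl(\GGG,k,k-1)$, and $j_{W},j_{V}$ are the natural maps to $\Gr(\GGG,k) \times \Gr(\GGG,k)$. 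Both $W$ and $V$ are smooth: $W$ is a $\PP^{k}\times\PP^{k}$-bundle over $\Gr(\GGG,k+1)$ because the rank-$(k+1)$ quotient is locally free, while $V$ is a fibre product of two copies of the generalised projective bundle $\PP_{\quo}(\ker(\GGG\twoheadrightarrow\W''))$ over $\Gr(\GGG,k-1)$, which is smooth of the expected dimension by Proposition A.1 of \cite{AT} and Proposition \ref{Proposition}.

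Next I would introduce the common refinement
\begin{equation*}
Y := \{\GGG \twoheadrightarrow \W' \twoheadrightarrow \W_{1},\W_{2} \twoheadrightarrow \W'' \ :\ \rk\W' = k+1,\ \rk\W_{i}=k,\ \rk\W'' = k-1\},
\end{equation*}
together with the forgetful morphisms $\alpha : Y \to W$ (drop $\W''$) and $\beta : Y \to V$ (drop $\W'$). Both commute with the projections to $\Gr(\GGG,k)\times\Gr(\GGG,k)$, so it suffices to establish $\alpha_{*}\Oo_{Y} \cong \Oo_{W}$ and $\beta_{*}\Oo_{Y} \cong \Oo_{V}$, whence
\begin{equation*}
(\Ff_{0}\ast\Ee_{0})\bo_{(N-k,k)} \cong (j_{W})_{*}\Oo_{W} \cong (j_{W}\alpha)_{*}\Oo_{Y} = (j_{V}\beta)_{*}\Oo_{Y} \cong (j_{V})_{*}\Oo_{V} \cong (\Ee_{0}\ast\Ff_{0})\bo_{(N-k,k)}.
\end{equation*}

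Over the open loci where $\W_{1}\neq\W_{2}$, both $\alpha$ and $\beta$ are isomorphisms: for $\alpha$, the pushout $\W_{1}\oplus_{\W'}\W_{2}$ recovers the unique $\W''$ of rank $k-1$; dually, for $\beta$, the fibred product $\W_{1}\times_{\W''}\W_{2}$ recovers the unique $\W'$ of rank $k+1$ and inherits a surjection from $\GGG$. A direct dimension count shows that $\alpha$ contracts the diagonal $\Delta_{W}\cong\Fl(\GGG,k+1,k)$ (codimension $k$ in $W$) with fibres $\PP_{\sub}(\W_{1})\cong\PP^{k-1}$, while $\beta$ contracts $\Delta_{V}\cong\Fl(\GGG,k,k-1)$ (codimension $N-k$ in $V$) with fibres $\PP_{\quo}(\ker(\GGG\twoheadrightarrow \W_{0}))$ of relative dimension $N-k-1$. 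Both $\alpha$ and $\beta$ are therefore (generalised) blow-ups of smooth centres inside smooth varieties, and the $i=0$ case of Proposition \ref{Proposition} (giving $\pi_{\Uu*}\Oo_{\PP_{\quo}(\Uu)} \cong \Oo$) combined with the standard higher-direct-image vanishing for blow-ups yields $\alpha_{*}\Oo_{Y}\cong\Oo_{W}$ and $\beta_{*}\Oo_{Y}\cong\Oo_{V}$.

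The main obstacle will be the treatment of $\beta$ and the fibre product $V$: unlike $W$, these are not classical projective bundles, so one genuinely needs Proposition \ref{Proposition} (or Jiang's refinement in \cite{Jia2}) to set up the blow-up-like structure and carry through the vanishing $R^{>0}\beta_{*}\Oo_{Y} = 0$ along $\Delta_{V}$. One must also verify that the kernel sheaves $\ker(\GGG\twoheadrightarrow \W'')$ and $\ker(\GGG\twoheadrightarrow\W_{0})$ retain homological dimension $\leq 1$ uniformly on $\Gr(\GGG,k-1)$ and $\Gr(\GGG,k)$ respectively; this follows by combining the two-term resolution of $\GGG$ with the tautological sequence (\ref{ses1}), but has to be executed carefully so that no spurious shifts or determinant twists survive in the final pushforwards.
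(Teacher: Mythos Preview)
Your approach is correct but takes a genuinely different route from the paper. The paper computes both convolutions by pushing the two fibre products $\Zz$ (your $W$) and $\Zz'$ (your $V$) \emph{down} to their common image
\[
\Yy=\{(\W,\W'')\in\Gr(\GGG,k)\times\Gr(\GGG,k):\rk(\W\cup\W'')\le k+1\},
\]
and simply asserts that $(\pi_{13}|_{\Zz})_{*}\Oo_{\Zz}\cong\Oo_{\Yy}\cong(\pi_{13'}|_{\Zz'})_{*}\Oo_{\Zz'}$, so that both kernels equal $t_{*}\Oo_{\Yy}$. You instead lift \emph{up} to the common roof $Y$ (this is exactly the paper's $\Xx$, which the paper only introduces later for the $r+s\neq 0$ case) and push down to $W$ and $V$ separately.

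Your route has the merit of staying entirely within smooth varieties: once the smoothness of $V$ is established (and it does follow, as you indicate, by writing $V\cong\PP_{\quo}(\tilde{\Ss})$ over the smooth $\Fl(\GGG,k,k-1)$ with $\tilde{\Ss}$ of homological dimension $\le 1$, then invoking Proposition~A.1 of \cite{AT}), the isomorphisms $R\alpha_{*}\Oo_{Y}\cong\Oo_{W}$ and $R\beta_{*}\Oo_{Y}\cong\Oo_{V}$ follow immediately from $\alpha,\beta$ being proper birational morphisms between smooth varieties --- you do not actually need the ``generalised blow-up'' structure or Proposition~\ref{Proposition} here, only rational singularities of the (smooth) targets. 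The paper's route is shorter but leaves the key step $R(\pi_{13}|_{\Zz'})_{*}\Oo_{\Zz'}\cong\Oo_{\Yy}$ unjustified, which is not entirely trivial since $\Yy$ is singular. One small caution: your claim that on the open locus $\W_{1}\neq\W_{2}$ the fibred product $\W_{1}\times_{\W''}\W_{2}$ ``inherits a surjection from $\GGG$'' amounts to $\Ss_{1}+\Ss_{2}=\Ss'''$ inside $\pi^{*}\GGG$, which holds generically but should be argued via irreducibility of $V$ and a dimension count rather than pointwise, since for coherent $\GGG$ the sum of two corank-one subsheaves of a sheaf need not be the whole sheaf everywhere.
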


\begin{proof}
By definition, 
\begin{equation} \label{e0f0}
(\Ff_{0} \ast \Ee_{0})\bo_{(N-k,k)} \cong
\pi_{13*}(\pi^*_{12} \iota_{*}\Oo_{\Fl(\GGG,k+1,k)} \otimes \pi^{*}_{23} {^{\TTt}}\iota_{*}\Oo_{\Fl(\GGG,k+1,k)} )
\end{equation} where $\iota:\Fl(\GGG,k+1,k) \rightarrow \Gr(\GGG,k) \times \Gr(\GGG,k+1)$ is the natural inclusion and ${^{\TTt}}\iota:\Fl(\GGG,k+1,k) \rightarrow \Gr(\GGG,k+1) \times \Gr(\GGG,k)$ is the transpose inclusion. 

Using the following fibred product diagrams 
\begin{equation*} 
	\begin{tikzcd} 
		\Fl(\GGG,k+1,k) \times \Gr(\GGG,k)   \arrow[r, "\iota \times id"]  \arrow[d, "a_{1}"] & \Gr(\GGG,k) \times  \Gr(\GGG,k+1) \times  \Gr(\GGG,k) \arrow[d, "\pi_{12}"]\\
		\Fl(\GGG,k,k+1)  \arrow[r, "\iota"] &  \Gr(\GGG,k) \times  \Gr(\GGG,k+1)
	\end{tikzcd}
\end{equation*}
\begin{equation*} 
	\begin{tikzcd} 
		\Gr(\GGG,k) \times \Fl(\GGG,k+1,k)   \arrow[r, "id \times ^{\TTt}\iota"]  \arrow[d, "a_{2}"] &\Gr(\GGG,k)  \times \Gr(\GGG,k+1) \times \Gr(\GGG,k)  \arrow[d, "\pi_{23}"]\\
		\Fl(\GGG,k+1,k) \arrow[r, "^{\TTt}\iota"] & \Gr(\GGG,k+1) \times \Gr(\GGG,k) 
	\end{tikzcd}
\end{equation*} where $a_1$, $a_2$ are the natural projections, then we obtain
\begin{equation}
 (\ref{e0f0})	\cong \pi_{13*}((\iota \times id)_{*}\Oo_{\Fl(\GGG,k+1,k) \times \Gr(\GGG,k)} \otimes (id\times ^{\TTt}\iota)_{*}\Oo_{\Gr(\GGG,k) \times \Fl(\GGG,k+1,k)}). \label{e0f01}
\end{equation}

Next, the following fibred product diagram 
\begin{equation*} 
	\begin{tikzcd} 
		\Zz   \arrow[r, "b_{1}"]  \arrow[d, "b_{2}"] &  \Fl(\GGG,k+1,k)  \times \Gr(\GGG,k)  \arrow[d, "\iota \times id"]\\
		\Gr(\GGG,k)  \times  \Fl(\GGG,k+1,k)   \arrow[r, "id \times ^{\TTt}\iota"] & \Gr(\GGG,k)  \times \Gr(\GGG,k+1) \times \Gr(\GGG,k) 
	\end{tikzcd}
\end{equation*} where $\Zz$ is the subvariety of $\Gr(\GGG,k)  \times \Gr(\GGG,k+1) \times \Gr(\GGG,k)$ defined as follows
\begin{equation*}
	\Zz \coloneqq \{ (\GGG \overset{N-k}{\twoheadrightarrow} \W, \ \GGG \overset{N-k-1}{\twoheadrightarrow} \W', \ \GGG \overset{N-k}{\twoheadrightarrow} \W'') \ | \  \W' \overset{1}{\twoheadrightarrow} \W, \  \W' \overset{1}{\twoheadrightarrow} \W''  \}, 
\end{equation*} and it can also be viewed as the subvariety that parametrizing the following diagram of locally free  quotients 
\[
\xymatrix@R=0.1pc{ 
	&&&  \W \\
	& \GGG \ar@{->>}[r]^{N-k-1} & \W'  \ar@{->>}[ru]^{1} \ar@{->>}[rd]_{1} \\
	&&& \W''
}.
\]

Thus this gives us that
\begin{equation} \label{e0f02}
	 (\ref{e0f01}) \cong  \pi_{13*}(id\times ^{\TTt}\iota)_{*}b_{2*}(\Oo_{\Zz}).
\end{equation}

Finally, we have the following commutative diagram
\begin{equation*} 
	\begin{tikzcd} 
		\Zz   \arrow[r, "j_{1}"]  \arrow[d, "\pi_{13}|_{\Zz}"] & \Gr(\GGG,k)  \times \Gr(\GGG,k+1)  \times  \Gr(\GGG,k) \arrow[d, "\pi_{13}"]\\
		\Yy  \arrow[r, "t"] & \Gr(\GGG,k) \times  \Gr(\GGG,k) 
	\end{tikzcd}
\end{equation*} where $\Yy$ is the following subvariety of $\Gr(\GGG,k) \times  \Gr(\GGG,k)$
\begin{equation*}
	\Yy=\pi_{13}(\Zz)=\{(\GGG \overset{N-k}{\twoheadrightarrow}\W,\GGG \overset{N-k}{\twoheadrightarrow} \W'')\ |\ \rk(\W\cup \W'') \leq k+1 \}
\end{equation*} and $j_1=(id \times ^{\TTt}\iota) \circ b_2$, $t:\Yy \rightarrow \Gr(\GGG,k) \times \Gr(\GGG,k)$ are the inclusions.  Note that $(\pi_{13}|_{\Zz*})(\Oo_{\Zz}) \cong \Oo_{\Yy}$ and thus
\begin{equation*}
	(\Ff_{0} \ast \Ee_{0})\bo_{(N-k,k)} \cong (\ref{e0f02}) \cong t_{*}(\pi_{13}|_{\Zz*})(\Oo_{\Zz}) \cong t_{*}\Oo_{\Yy}.
\end{equation*} 

Similarly, we end up with the following diagram when calculating $(\Ee_{0} \ast \Ff_{0})\bo_{(N-k,k)}$ 
\begin{equation*} 
	\begin{tikzcd} 
		\Zz'   \arrow[r, "j_{2}"]  \arrow[d, "\pi_{13'}|_{Z'}"] &\Gr(\GGG,k) \times \Gr(\GGG,k-1) \times \Gr(\GGG,k) \arrow[d, "\pi_{13'}"]\\
		\Yy  \arrow[r, "t"] & \Gr(\GGG,k) \times  \Gr(\GGG,k)
	\end{tikzcd}
\end{equation*}  where 
\begin{equation*}
	\Zz' \coloneqq \{ (\GGG \overset{N-k}{\twoheadrightarrow} \W, \ \GGG \overset{N-k+1}{\twoheadrightarrow} \W''', \ \GGG \overset{N-k}{\twoheadrightarrow} \W'') \ | \  \W \overset{1}{\twoheadrightarrow} \W''', \  \W'' \overset{1}{\twoheadrightarrow} \W'''  \}, 
\end{equation*} or the subvariety that parametrizing the following diagram of locally free quotients 
\[
\xymatrix@R=0.1pc{ 
& & \W \ar@{->>}[rd]^{1} \\
& \GGG  \ar@{->>}[ru]^{N-k} \ar@{->>}[rd]_{N-k} & & \W'''  \\
& & \W'' \ar@{->>}[ru]_{1}
}
\]

By the same argument
\begin{equation*}
	(\Ee_{0} \ast \Ff_{0})\bo_{(N-k,k)} \cong	\pi_{13'*}j_{2*}(\Oo_{\Zz'}) \cong t_{*}(\pi_{13'}|_{\Zz'*})(\Oo_{\Zz'}) \cong t_{*}\Oo_{\Yy}
\end{equation*} which proves the lemma.

\end{proof}

Before we move to the proof of Theorem \ref{Theorem 7}, we need to introduce more tools. From the two maps $\pi_{13'}|_{Z'}:\Zz' \rightarrow \Yy$ and $\pi_{13}|_{Z}:\Zz \rightarrow \Yy$ we can form their fibred product
\begin{equation} \label{diagram1}
	\begin{tikzcd} [column sep=large]
		\Xx=\Zz \times_{\Yy} \Zz' \arrow[r, "g_{1}"]  \arrow[d, "g_{2}"] & \Zz\arrow[d, "\pi_{13}|_{\Zz}"]\\
		\Zz' \arrow[r, "\pi_{13'}|_{\Zz'}"] & \Yy
	\end{tikzcd}
\end{equation} where $g_1$ and $g_2$ are the natural projections. We denote $p:\Xx \rightarrow \Yy$ to be the natural projection. Note that $\Xx$ is given by 
\begin{equation*}
	\Xx =\{ (\GGG \overset{N-k-1}{\twoheadrightarrow} \W', \ \GGG \overset{N-k}{\twoheadrightarrow} \W, \ \GGG \overset{N-k}{\twoheadrightarrow} \W'',  \ \GGG \overset{N-k+1}{\twoheadrightarrow} \W''') \ | \  \W' \overset{1}{\twoheadrightarrow} \W, \  \W' \overset{1}{\twoheadrightarrow} \W'', \ \W \overset{1}{\twoheadrightarrow} \W''', \  \W'' \overset{1}{\twoheadrightarrow} \W'''  \}
\end{equation*} or the variety parametrizing the following diagram of locally free quotients 
\[
\xymatrix@R=0.1pc{ 
	& && \W \ar@{->>}[rd]^{1} \\
	& \GGG \ar@{->>}[r]^{N-k-1} & \W'  \ar@{->>}[ru]^{1} \ar@{->>}[rd]_{1} && \W'''  \\
	& & &\W'' \ar@{->>}[ru]_{1}
}.
\]

Roughly speaking, the idea of comparing $(\Ff_{s} \ast \Ee_{r})\bo_{(N-k,k)}$ and $(\Ee_{r} \ast \Ff_{s})\bo_{(N-k,k)}$ is to pullback them to the larger space $\Xx$ and then pushforward to $\Yy$. 
Let $\Qq'$, $\Qq$, $\Qq''$, $\Qq'''$ be the four tautological quotient bundles on $\Xx$ with natural morphisms between them fit into the following commutative diagram
\begin{equation} \label{diagQ}
	\xymatrix{
		0 \ar[r] &\ker(\rho_2) \ar[r] \ar@{.>}[d]^{s}
		& \Qq' \ar@{->>}[r]^{\rho_2} \ar@{->>}[d]^{\rho_1}
		& \Qq''  \ar[r] \ar@{->>}[d]^{\rho_4} 
		& 0\\
		0 \ar[r] &\ker(\rho_3) \ar[r] 
		& \Qq \ar@{->>}[r]^{\rho_3}  
		& \Qq'''  \ar[r]
		& 0
	}
\end{equation} where $\ker(\rho_2)$ and $\ker(\rho_3)$ are the natural line bundles on $\Xx$. Since the right square commutes, it induces a natural map $s:\ker(\rho_2) \rightarrow \ker(\rho_3)$ between line bundles. The morphism $s$ gives rise to a section of the line bundle $\ker(\rho_2)^{-1}\otimes \ker(\rho_3)$ whose zero locus cut out a divisor $D$ in $\Xx$ which consists of points where $\W=\W''$. We have the following short exact sequence
\begin{equation}\label{dses}
	0 \rightarrow \ker(\rho_2) \rightarrow \ker(\rho_3) \rightarrow \Oo_{D} \otimes \ker(\rho_3) \rightarrow 0.
\end{equation} Moreover, note that the partial flag variety $\Fl(\GGG,k,k-1)$ can be identified with $\Gr(\Qq,k-1)$ which is the projective bundle $\PP_{\sub}(\Qq)$. So the restriction of the line bundle $\ker(\rho_3)$ to the divisor $D$ is the pullback of the tautological bundle $\Oo_{\PP_{\sub}(\Qq)}(-1)$, i.e. (\ref{dses}) becomes 
\begin{equation}\label{dses'}
	0 \rightarrow \ker(\rho_2) \rightarrow \ker(\rho_3) \rightarrow \Oo_{D} \otimes \Oo_{\PP_{\sub}(\Qq)}(-1) \rightarrow 0.
\end{equation} 

On the other hand, the commutative right square again there induces a morphism $s':\ker(\rho_1) \rightarrow \ker(\rho_4)$ between line bundles. The zero locus of $s'$ cuts out the same divisor $D$ and we obtain another short exact sequence which is similar to (\ref{dses}).
\begin{equation} \label{sesd}
	0 \rightarrow \ker(\rho_1) \rightarrow \ker(\rho_4) \rightarrow \Oo_{D} \otimes \ker(\rho_4)  \rightarrow 0.
\end{equation} We tenor the short exact sequence (\ref{sesd}) by $\ker(\rho_1)^{-1} \otimes \ker(\rho_4)^{-1}$ to get 
\begin{equation} \label{sesd'}
	0 \rightarrow \ker(\rho_4)^{-1} \rightarrow \ker(\rho_1)^{-1} \rightarrow \Oo_{D} \otimes \ker(\rho_1)^{-1}  \rightarrow 0.
\end{equation}

We let $\Ss', \ \Ss, \ \Ss'', \ \Ss'''$ denote the four tautological subsheaves on $\Xx$ corresponding to $\Qq',\ \Qq,\ \Qq'',\ \Qq'''$ respectively. For example, we have $0 \rightarrow \Ss' \rightarrow \GGG \rightarrow \Qq' \rightarrow 0$ on $\Xx$. Then $\ker(\rho_1)$ also fits into the following short exact sequence
\begin{equation*} 
	0 \rightarrow \Ss' \rightarrow \Ss \rightarrow \ker(\rho_1) \rightarrow 0.
\end{equation*}  Since the partial flag variety $\Fl(\GGG,k+1,k)$ can be identified with $\Gr(\Ss,1)$ which is the projective bundle $\PP_{\quo}(\Ss)$, the restriction of $\ker(\rho_1)$ to $D$ is the pullback of the tautological bundle $\Oo_{\PP_{\quo}(\Ss)}(1)$. Thus (\ref{sesd'}) becomes 
\begin{equation}
	0 \rightarrow \ker(\rho_4)^{-1} \rightarrow \ker(\rho_1)^{-1} \rightarrow \Oo_{D} \otimes \Oo_{\PP_{\quo}(\Ss)}(-1)  \rightarrow 0.
\end{equation}

Since $\Psi^{+}\bo_{(N-k,k)}$ is invertible, from Lemma \ref{lemma 4} we have 
\begin{equation*}
[\Psi^{+}\ast \Ee_{r}\ast (\Psi^{+})^{-1} ]\bo_{(N-k,k)} \cong \Ee_{r+1}\bo_{(N-k,k)}[-1]	
\end{equation*} similarly for $\Ff_{s}\bo_{(N-k,k)}$. Applying this inductively we obtain
\begin{align*}
	& (\Ee_{r}\ast \Ff_{s})\bo_{(N-k,k)} \cong  [(\Psi^{+})^r \ast \Ee_{0} \ast \Ff_{s+r} \ast (\Psi^{+})^{-r} ]\bo_{(N-k,k)}, \\
	& (\Ff_{s}\ast \Ee_{r})\bo_{(N-k,k)}
	\cong  [(\Psi^{+})^r \ast \Ff_{r+s} \ast \Ee_{0} \ast (\Psi^{+})^{-r} ]\bo_{(N-k,k)}.
\end{align*} Thus it suffices to compare $(\Ff_{r+s} \ast \Ee_{0})\bo_{(k,N-k)}$ and $ (\Ee_{0} \ast \Ff_{r+s})\bo_{(k,N-k)}$. On the other hand, we can also simplify it so that it suffices to compare $(\Ff_{0} \ast \Ee_{r+s})\bo_{(k,N-k)}$ and $ (\Ee_{r+s} \ast \Ff_{0})\bo_{(k,N-k)}$.

\begin{proof}[Proof of Theorem \ref{Theorem 8}]
Since we define our functor ${\E}_{r}\bo_{(N-k,k)}$,  ${\F}_{s}\bo_{(N-k,k)}$ with $k-N \leq r \leq0$, $0 \leq s \leq k$. From the above discussion, it suffices to compare $(\Ff_{s} \ast \Ee_{0})\bo_{(k,N-k)}$ with $(\Ee_{0} \ast \Ff_{s})\bo_{(k,N-k)}$ for $1 \leq s \leq k$ and  $(\Ff_{0} \ast \Ee_{r})\bo_{(k,N-k)}$ with $(\Ee_{r} \ast \Ff_{0})\bo_{(k,N-k)}$ for $k-N \leq r \leq -1$.

First, we consider the case for $(\Ff_{s} \ast \Ee_{0})\bo_{(k,N-k)}$ and $(\Ee_{0} \ast \Ff_{s})\bo_{(k,N-k)}$ with $1 \leq s \leq k$. By definition and using the base change of the diagram \ref{diagram1}, we have 
\begin{align*}
	(\Ee_{0} \ast \Ff_{s})\bo_{(N-k,k)} &\cong  t_{*}(\pi_{13'}|_{\Zz'*})\ker(\rho_3)^{s} \cong t_{*}(\pi_{13'}|_{\Zz'*})g_{2*}g_{2}^{*}\ker(\rho_3)^s \cong t_{*}p_{*}\ker(\rho_3)^{s}, \\
	(\Ff_{s} \ast \Ee_{0})\bo_{(N-k,k)} &\cong
	t_{*}(\pi_{13}|_{\Zz*})\ker(\rho_2)^{s} \cong t_{*}(\pi_{13}|_{\Zz*})g_{1*}g_{1}^{*} \ker(\rho_2)^{s}\cong t_{*}p_{*}\ker(\rho_2)^{s}.
\end{align*}

For each $n \geq 1$, we have the following short exact sequence on $\Xx$
\begin{equation} \label{dses1} 
	0 \rightarrow \ker(\rho_2)^{n} \rightarrow \ker(\rho_3)^{n} \rightarrow \Oo_{nD} \otimes \ker(\rho_3)^{n} \rightarrow 0.
\end{equation}

Applying $t_{*}p_{*}$ to (\ref{dses1}) with $n=s$, we obtain the following exact triangle in $\Dd^b(\Gr(\GGG,k) \times \Gr(\GGG,k))$.
\begin{equation} \label{et1}
	t_{*}p_{*}\ker(\rho_2)^{s} \cong (\Ff_{s} \ast \Ee_{0})\bo_{(N-k,k)}  \rightarrow t_{*}p_{*}\ker(\rho_3)^{s} \cong 	(\Ee_{0} \ast \Ff_{s})\bo_{(N-k,k)}  \rightarrow 	t_{*}p_{*}(\Oo_{sD} \otimes \ker(\rho_3)^{s}).
\end{equation} 

We need to know the third term $t_{*}p_{*}(\Oo_{sD} \otimes \ker(\rho_3)^{s})$. When $s=1$, it is clear that $p_{*}(\Oo_{D} \otimes \ker(\rho_3)) \cong 0$ by the projective bundle formula. Here we use $\Oo_{D} \otimes \ker(\rho_3) = \Oo_{D} \otimes\Oo_{\PP_{\sub}(\Qq)}(-1)$ from the short exact sequence (\ref{dses'}). Thus we prove $(\Ff_{1} \ast \Ee_{0})\bo_{(N-k,k)} \cong  (\Ee_{0} \ast \Ff_{1})\bo_{(N-k,k)}$. Assuming $s \geq 2$ from now on.

Tensoring the short exact sequence (\ref{dses}) by $\ker(\rho_2)^{n-1}$ we get 
\[
0 \rightarrow \ker(\rho_2)^{n} \rightarrow \ker(\rho_3) \otimes \ker(\rho_2)^{n-1}  \rightarrow \Oo_{D} \otimes \ker(\rho_3) \otimes \ker(\rho_2)^{n-1} \rightarrow 0 .
\] Together with (\ref{dses1}), they form the following diagram of morphisms between exact triangles in $\Dd^b(\Xx)$.	
\[
\xymatrix{
	\ker(\rho_2)^{n} \ar[d]^{id} \ar[r] &\ker(\rho_3) \otimes  \ker(\rho_2)^{n-1}  \ar[d] \ar[r] &\Oo_{D} \otimes  \ker(\rho_3) \otimes  \ker(\rho_2)^{n-1} \ar[d] \\
	\ker(\rho_2)^{n}  \ar[d] \ar[r] &  \ker(\rho_3)^{n} \ar[d] \ar[r] &\Oo_{nD} \otimes  \ker(\rho_3)^{n} \ar[d] \\
	0 \ar[r] &\Oo_{(n-1)D} \otimes  \ker(\rho_3)^{n} \ar[r] &\Oo_{(n-1)D} \otimes   \ker(\rho_3)^{n}
}
\]

So we obtain the exact triangle in $\Dd^b(\Xx)$	
\begin{equation} \label{et2}
	\Oo_{D} \otimes  \ker(\rho_3) \otimes  \ker(\rho_2)^{n-1} \rightarrow \Oo_{nD} \otimes \ker(\rho_3)^{n} \rightarrow \Oo_{(n-1)D} \otimes  \ker(\rho_3)^{n}
\end{equation} for all $n \geq 1$ (Here we take $\Oo_{(n-1)D}$ to be $0$ when $n=1$).

Considering the exact triangle (\ref{et2}) with $2 \leq n=s \leq k$ and applying $p_{*}$ to it. Then by the projection formula, we have $p_{*}(\Oo_{D} \otimes  \ker(\rho_3) \otimes  \ker(\rho_2)^{s-1}) \cong 0$. This implies that $p_{*}(\Oo_{sD} \otimes \ker(\rho_3)^{s}) \cong  p_{*}(\Oo_{(s-1)D} \otimes  \ker(\rho_3)^{s})$.  Next, tensoring the exact triangle (\ref{et2}) with $n=s-1$ by the line bundle $\ker(\rho_3)$, then apply $p_{*}$. By the same argument as before, we obtain $p_{*}(\Oo_{(s-1)D} \otimes \ker(\rho_3)^{s}) \cong  p_{*}(\Oo_{(s-2)D} \otimes  \ker(\rho_3)^{s})$. Continuing this process, we will end up with 
\begin{equation*}
	p_{*}(\Oo_{sD} \otimes \ker(\rho_3)^{s}) \cong  p_{*}(\Oo_{(s-1)D} \otimes  \ker(\rho_3)^{s}) \cong \dots \cong p_{*}(\Oo_{2D} \otimes \ker(\rho_3)^{s}).
\end{equation*}

Finally, tenoring the exact triangle (\ref{et2}) with $n=2$ by the line bundle $\ker(\rho_3)^{s-2}$, we obtain 
\begin{equation*}
	\Oo_{D} \otimes  \ker(\rho_3)^{s-1} \otimes  \ker(\rho_2) \rightarrow \Oo_{2D} \otimes \ker(\rho_3)^{s} \rightarrow \Oo_{D} \otimes  \ker(\rho_3)^{s}.
\end{equation*} Since $s \geq 2$, applying $p_{*}$ and using the projection formula we get $p_{*}(\Oo_{D} \otimes  \ker(\rho_3)^{s-1} \otimes  \ker(\rho_2) ) \cong 0$ and thus
\begin{equation} \label{proj}
p_{*}(\Oo_{2D} \otimes \ker(\rho_3)^{s}) \cong p_{*}(\Oo_{D} \otimes  \ker(\rho_3)^{s}) \cong p_{*}(\Oo_{D} \otimes  \Oo_{\PP_{\sub}(\Qq)}(-s)).	
\end{equation}

Since $\rk \Qq=k$, using the projective bundle formula in (\ref{proj}), we conclude that : If $2 \leq s \leq k-1$, then $p_{*}(\Oo_{sD} \otimes \ker(\rho_3)^{s}) \cong 0$. So $(\Ff_{s} \ast \Ee_{0})\bo_{(N-k,k)} \cong  (\Ee_{0} \ast \Ff_{s})\bo_{(N-k,k)}$ for $1 \leq s \leq k-1$. If $s=k$, then $t_{*}p_{*}(\Oo_{D} \otimes  \Oo_{\PP_{\sub}(\Qq)}(-k)) \cong \Delta{*}\det(\Qq)[1-k]$ which is precisely the FM kernel $\Psi^{+}\bo_{(N-k,k)}$. Hence we obtain the exact triangle (\ref{eta}). 

Next, we prove the second case where we compare $(\Ff_{0} \ast \Ee_{r})\bo_{(k,N-k)}$ with $(\Ee_{r} \ast \Ff_{0})\bo_{(k,N-k)}$ for $k-N \leq r \leq -1$. We have to mention here that this case is not like the first case since the coherent sheaf $\Ss_{\pi}$ will be involved in the calculation, which needs some extra tool to help.

Similarly, by definition and using base change of diagram \ref{diagram1}, we have 
\begin{align*}
	(\Ee_{r} \ast \Ff_{0})\bo_{(N-k,k)} &\cong t_{*}p_{*}\ker(\rho_4)^{r}, \\
	(\Ff_{0} \ast \Ee_{r})\bo_{(N-k,k)} &\cong t_{*}p_{*}\ker(\rho_1)^{r}.
\end{align*}

Like (\ref{dses1}), for each $n \geq 1$ we have the following short exact sequence
\begin{equation} \label{sesd1}
	0 \rightarrow \ker(\rho_4)^{-n} \rightarrow \ker(\rho_1)^{-n} \rightarrow \Oo_{nD} \otimes \ker(\rho_1)^{-n} \rightarrow 0.
\end{equation} Applying $t_{*}p_{*}$ to it with $n=-r$ (note that $k-N \leq r \leq -1$), then we obtain the following exact triangle in $\Dd^b(\Gr(\GGG,k) \times \Gr(\GGG,k))$
\begin{equation} \label{et3}
	t_{*}p_{*}\ker(\rho_4)^{r} \cong (\Ee_{r} \ast \Ff_{0})\bo_{(N-k,k)} \rightarrow t_{*}p_{*}\ker(\rho_1)^{r} \cong (\Ff_{0} \ast \Ee_{r})\bo_{(N-k,k)} \rightarrow t_{*}p_{*}(\Oo_{(-r)D} \otimes \ker(\rho_1)^{r}).
\end{equation}

Thus it suffices to know the third term $t_{*}p_{*}(\Oo_{(-r)D} \otimes \ker(\rho_1)^{r})$. We also have the following exact triangles similar to (\ref{et2})  
\begin{equation} \label{et4}
		\Oo_{D} \otimes  \ker(\rho_1)^{-1} \otimes  \ker(\rho_4)^{-n+1} \rightarrow \Oo_{nD} \otimes \ker(\rho_1)^{-n} \rightarrow \Oo_{(n-1)D} \otimes  \ker(\rho_1)^{-n}
\end{equation} for all $n \geq 1$ (Here we take $\Oo_{(n-1)D}$ to be $0$ when $n=1$).

The argument is pretty much the same as the previous case except that now we have $\Oo_{D} \otimes  \ker(\rho_1)^{-1}= \Oo_{D} \otimes \Oo_{\PP_{\quo}(\Ss)}(-1)$ where $\Ss$ is only a coherent sheaf (not locally free in general), so we can not apply the usual projective bundle formula directly.

However, since $\Ss$ fits into the short exact sequence $0 \rightarrow \Ss \rightarrow \GGG \rightarrow \Qq \rightarrow 0$ on $\Xx$ and $\GGG$ has homological dimension $\leq 1$ by the assumption, plus the fact that $\Qq$ is locally free we conclude that $\Ss$ also has homological dimension $\leq 1$.

Thus we can apply the projective bundle formula for coherent sheaf with homological dimension $\leq 1$, i.e. Proposition \ref{Proposition}. Repeating the argument as above we also obtain the following isomorphism
\begin{equation} \label{proj'}
	p_{*}(\Oo_{(-r)D} \otimes \ker(\rho_1)^{r}) \cong p_{*}(\Oo_{2D} \otimes \ker(\rho_1)^{r}) \cong  p_{*}(\Oo_{D} \otimes  \ker(\rho_1)^{r}) \cong p_{*}(\Oo_{D} \otimes  \Oo_{\PP_{\quo}(\Ss)}(r)).	
\end{equation}

Observing that since we only consider $ -N+k \leq r \leq -1$ and $\rk \Ss=N-k$, the result of $p_{*}(\Oo_{(-r)D} \otimes \Oo_{\PP_{\quo}(\Ss)}(r))$ is exactly the same as the projective bundle formula when $\Ss$ is locally free of rank $N-k$. This implies that if $-k+N+1 \leq r \leq -1$, then $p_{*}(\Oo_{(-r)D} \otimes \Oo_{\PP_{\quo}(\Ss)}(r)) \cong 0$ and $(\Ee_{r} \ast \Ff_{0})\bo_{(N-k,k)} \cong (\Ff_{0} \ast \Ee_{r})\bo_{(N-k,k)}$. If $r=-k+N$, then  $p_{*}(\Oo_{(N-k)D} \otimes \Oo_{\PP_{\quo}(\Ss)}(-k+N)) \cong \det(\Ss)^{-1}[1+k-N]$, which is exactly $\Psi^{-}\bo_{(N-k,k)}$ and we get the second exact triangle (\ref{etb}). 
\end{proof}

Combining Lemma \ref{lemma 3}, Lemma \ref{lemma 4}, Theorem \ref{Theorem 8}, Theorem \ref{Proposition 3} and Theorem \ref{Theorem 6}, we prove Theorem \ref{Theorem 7}.

\begin{remark} \label{remark 2} 
Note that by Theorem \ref{Proposition 3} and Theorem \ref{Theorem 6}, the functors ${\F}_{\lambda}\bo_{(0,N)}$, ${\E}_{-\mu}\bo_{(N,0)}$ are not exceptional in general. More precisely, we have $\Hom({\E}_{-\mu}\bo_{(N,0)},{\E}_{-\mu}\bo_{(N,0)}) \cong \Hom(\bo_{(N,0)},\bo_{(N,0)})$, similarly for ${\F}_{\lambda}\bo_{(0,N)}$. In this geometric example where $\Kk(N-k,k)=\Dd^b(\Gr(\GGG,k))$, we have $\Kk(N,0)=\Dd^b(\Gr(\GGG,0))=\Dd^b(X)$ which is the base variety. Thus the functor $\bo_{(N,0)}$ is a FM transformation with FM kernel given by $\Delta_{*}\Oo_{X} \in \Dd^b(X \times X)$. We get 
\begin{align*}
	\Hom({\E}_{-\mu}\bo_{(N,0)},{\E}_{-\mu}\bo_{(N,0)}) & \cong \Hom(\bo_{(N,0)},\bo_{(N,0)})  \\
	& \cong \Hom_{\Dd^b(X \times X)}(\Delta_{*}\Oo_{X},\Delta_{*}\Oo_{X}) \\
	& \cong HH^{*}(X)
\end{align*}  which is the Hoschild cohomology of $X$.
\end{remark}

There is already much progress on constructing $\SOD$s for the derived category of coherent sheaves on relative (derived) Quot scheme of coherent sheaf with homological dimension $\leq 1$. 

For a precise formula, let $\sigma:\Ee^{-1} \rightarrow \Ee^{0}$ denote the morphism in the locally free resolution of $\GGG$ so that $\GGG_{\sigma} \coloneqq \GGG =\coker (\sigma)$. By taking dual we obtain a morphism $\sigma^{\vee}:\Ee^{0,\vee} \rightarrow \Ee^{-1,\vee}$, and the cokernel is given by the sheaf $\Hh_{\sigma^{\vee}} \coloneqq \coker(\sigma^{\vee})=\Ee xt_{\Oo_{X}}^1(\GGG,\Oo_{X})$. Note that $\Hh_{\sigma^{\vee}}$ is supported on the locus where $\GGG_{\sigma}$ is not locally free.  We also consider the relative Quot scheme $\Gr(\Hh_{\sigma^{\vee}},k)$. Then we have the following theorem

\begin{theorem}[Jiang-Leung \cite{JL}, Toda \cite{T}] \label{sod}
Assume the Tor-independent conditions holds for the pairs of integers $(k,k-i)$, where $0 \leq i \leq \mini\{k,N\}$ (see Definition 6.3 in \cite{Jia1}). Then there is a semiorthogonal decomposition of the form
\begin{equation} \label{eqsod}
\Dd^b(\Gr(\GGG_{\sigma},k)) = \langle {N \choose i}-copies \ of \  \Dd^b(\Gr(\Hh_{\sigma^{\vee}},k-i)): \  0\leq i \leq \mini\{k,N\}	 \rangle
\end{equation} which is called the \textit{projectivization formula} ($k=1$) in \cite{JL} and the \textit{Quot formula} (for general $k$) in \cite{T}.
\end{theorem}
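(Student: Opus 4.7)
The plan is to mimic the Jiang--Leung/Toda strategy: realize $\Gr(\GGG,k)$ as a degeneracy locus inside a locally free Grassmannian bundle, transport Kapranov's $\SOD$ across the resulting Koszul embedding, and then identify the leftover blocks using the duality $\sigma\leftrightarrow\sigma^\vee$.

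First, lift the resolution $\sigma:\Ee^{-1}\to\Ee^0$ to the classical relative Grassmannian $\pi:\Gr(\Ee^0,k)\to X$, and let $\Qq$ denote the universal rank-$k$ quotient bundle. The composition $\pi^*\Ee^{-1}\xrightarrow{\pi^*\sigma}\pi^*\Ee^0\twoheadrightarrow\Qq$ determines a section of $\pi^*\Ee^{-1,\vee}\otimes\Qq$ whose zero locus is precisely $\Gr(\GGG_\sigma,k)\hookrightarrow\Gr(\Ee^0,k)$; the Tor-independence hypothesis forces this section to be regular, giving an explicit Koszul resolution of $\Oo_{\Gr(\GGG,k)}$ on $\Gr(\Ee^0,k)$ with the correct expected codimension.

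Next, apply Kapranov's $\SOD$ (relative over $X$) on $\Dd^b(\Gr(\Ee^0,k))$: the blocks are indexed by Young diagrams $\blam\in P(\rk\Ee^0-k,k)$, each equivalent to $\Dd^b(X)$ via Schur-tensoring with the universal subbundle. Pull these blocks back through the Koszul resolution and sort them by their interaction with $\wedge^\bullet(\pi^*\Ee^{-1,\vee}\otimes\Qq)$: partitions that fit inside $P(N-k,k)$ produce the $i=0$ Kapranov-type block of $\Gr(\GGG,k)$, while the remaining partitions contribute classes built from genuine excess wedge powers. The critical identification is that, after mutation and twisting by powers of $\det\Qq$, these excess classes reassemble into copies of $\Dd^b(\Gr(\Hh_{\sigma^\vee},k-i))$; Grothendieck--Serre duality on $\Gr(\Ee^0,k)$ implements the switch $\Ee^{-1}\mapsto\Ee^{-1,\vee}$ and hence identifies $\coker\sigma^\vee=\Hh_{\sigma^\vee}$ as the coherent sheaf governing the orthogonal pieces. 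The multiplicity $\binom{N}{i}$ is extracted from the combinatorics of the Young diagrams that ``cross'' the boundary $P(N-k,k)\subset P(\rk\Ee^0-k,k)$ at level $i$.

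The main obstacle is this last step: controlling the mutations, Serre twists, and Koszul corrections so that each excess block is recognised precisely as a copy of $\Dd^b(\Gr(\Hh_{\sigma^\vee},k-i))$ without leftover correction terms. For $k=1$ this is carried out in Jiang--Leung \cite{JL} via an explicit analysis of the projectivization and its dual; for general $k$ one follows Toda \cite{T}, reducing the Quot case to iterated projectivizations through the flag bundle $\Fl(\GGG,k,k-1,\ldots,1)\to\Gr(\GGG,k)$ and inducting on $k$. The Tor-independence assumption on each pair $(k,k-i)$ is exactly what makes all auxiliary (potentially derived) Quot schemes classical, so the Fourier--Mukai kernels compose as in the locally free case throughout the induction.
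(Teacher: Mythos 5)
The paper does not prove this statement: Theorem \ref{sod} is quoted as an external result of Jiang--Leung and Toda, and the author explicitly contrasts it with his own Theorem \ref{Theorem 7} rather than deriving it. So there is no internal proof to compare your argument against; your proposal has to stand on its own, and as it stands it does not.

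Your setup is fine: $\Gr(\GGG_\sigma,k)$ is indeed the zero locus of the section of $\pi^{*}\Ee^{-1,\vee}\otimes\Qq$ on $\Gr(\Ee^{0},k)$, the expected codimension $k\cdot\rk\Ee^{-1}$ matches, and smoothness of the Quot scheme makes the Koszul complex a resolution. But the entire content of the theorem lives in the step you label ``the main obstacle'' and then defer to \cite{JL} and \cite{T}: showing that the restricted Kapranov blocks reorganize, after mutation, into $\binom{N}{i}$ copies of $\Dd^b(\Gr(\Hh_{\sigma^{\vee}},k-i))$. You give no mechanism for this. In particular, $\Gr(\Hh_{\sigma^{\vee}},k-i)$ is not itself a Grassmannian bundle, so it cannot appear as a union of Kapranov blocks of $\Gr(\Ee^{0},k)$; producing it requires a genuinely new correspondence (Jiang--Leung's incidence/mutation argument for $k=1$, or Toda's categorified Hall products and Koszul duality for quasi-smooth derived schemes for general $k$ --- not an induction through the flag bundle over the classical Quot scheme, which is not how \cite{T} proceeds). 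Your bookkeeping is also off: the partitions in $P(N-k,k)$ yield the $\binom{N}{k}$ copies of $\Dd^b(X)=\Dd^b(\Gr(\Hh_{\sigma^{\vee}},0))$, i.e.\ the $i=k$ term of (\ref{eqsod}), not the $i=0$ term, which is a single copy of $\Dd^b(\Gr(\Hh_{\sigma^{\vee}},k))$ supported on the degeneracy locus. Finally, no argument is offered for why the ``crossing'' diagrams at level $i$ number exactly $\binom{N}{i}$ times the size of a generating set for $\Dd^b(\Gr(\Hh_{\sigma^{\vee}},k-i))$. As written, the proposal is a plausible opening move plus a citation of the theorem being proved, not a proof.
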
 

\begin{remark}
In fact, the above theorem can be formulated without the Tor-independent conditions in the derived algebraic geometry setting where $\Gr(\GGG_{\sigma},k)$ and $\Gr(\Hh_{\sigma^{\vee}},k-i)$ are quasi-smooth derived schemes over $X$, see \cite{T}.
\end{remark}

Although our approach to obtain a $\SOD$ is relatively elementary compared to those tools (e.g. Koszul duality, categorified Hall product) used in \cite{T}, we believe that the $\SOD$ in our result (Theorem \ref{Theorem 7}) is the same as the one in Theorem \ref{sod} except that the orthogonal complements $\Aa(N-k,k)$, $\Bb(N-k,k)$ are unclear in our result.

Note that in the case where $k<N$ and $i=k$, we have $\Gr(\Hh_{\sigma^{\vee}},0)=X$. So
there are $N \choose k$-copies of $\Dd^b(X)$ in the $\SOD$ (\ref{eqsod}).  On the other hand, from the $\SOD$s in Theorem \ref{Theorem 7}, the subcategories $\textnormal{Im}{\F}_{\blam}\bo_{(0,N)}$ and $\textnormal{Im}{\E}_{-\bmu}\bo_{(N,0)}$ are essential images of the highest and lowest weight categories $\Dd^b(\Gr(\GGG,N))$ and $\Dd^b(\Gr(\GGG,0))$, respectively. We also have $\Gr(\GGG,N)=\Gr(\GGG,0)=X$, so $\{\textnormal{Im}{\F}_{\blam}\bo_{(0,N)}\}_{\blam \in P(k,N-k)}$ and $\{\textnormal{Im}{\E}_{-\bmu}\bo_{(N,0)}\}_{\bmu \in P(N-k,k)}$ both give $N \choose k$-copies of of $\Dd^b(X)$. This part agrees with the $\Dd^b(X)$ part we mention first in the $\SOD$ (\ref{eqsod}).

Then the complement $\Aa(N-k,k)$, $\Bb(N-k,k)$ will be the rest that consists of  $N \choose i$-copies of  $\Dd^b(\Gr(\Hh_{\sigma^{\vee}},k-i))$ where $0\leq i \leq k-1$.
In future work, it would be interesting to see the  representation-theoretical meaning of the collection $\bigoplus_{k}\Aa(N-k,k)$, $\bigoplus_{k}\Bb(N-k,k)$ from the shifted $q=0$ affine algebra.

\end{document}